\documentclass[a4paper,12pt]{article}

\usepackage[latin1]{inputenc}
\usepackage[english]{babel}
\usepackage{amssymb}
\usepackage{amsmath}
\usepackage{latexsym}
\usepackage{amsthm}
\usepackage[pdftex]{graphicx}
\usepackage{tikz} %questo package va messo dopo graphicx
\usepackage{hyperref}
\usepackage{pgfplots}  
\usepackage{mathtools}
\pgfplotsset{width=6.6cm,compat=1.7}

\usepackage{array}
\usepackage{booktabs}

\usepackage{placeins}

\usetikzlibrary{cd}

\usetikzlibrary{decorations.pathreplacing}

\DeclareMathOperator*{\wep}{wp}

 \theoremstyle{plain}
 \newtheorem{thm}{Theorem}[section]
 \newtheorem{cor}[thm]{Corollary}
 \newtheorem{lem}[thm]{Lemma}
 \newtheorem{prop}[thm]{Proposition}
 \newtheorem{question}[thm]{Question}

 \theoremstyle{definition}
 \newtheorem{defn}[thm]{Definition}
 \newtheorem{example}[thm]{Example}

 \theoremstyle{remark}
 \newtheorem{oss}[thm]{Remark}
\title{Involutions avoiding $4321$ and another pattern of length four}
\date{}
\author{Marilena Barnabei $^\dagger$ \\
P.A.M. \\
Universit\`a di Bologna, 40126, ITALY \\
\texttt{marilena.barnabei@unibo.it}\\
https://orcid.org/0000-0001-9682-3834\and
Niccol\`o Castronuovo $^\dagger$ \thanks{corresponding author} \ \\
Universit\`a di Bologna, Campus di Cesena, 47521, ITALY \\
\texttt{niccolo.castronuovo2@unibo.it}\\
https://orcid.org/0000-0002-1054-0161\and
Matteo Silimbani $^\dagger$  \\
Istituto Comprensivo ``E. Rosetti'', Forlimpopoli, 47034, ITALY \\
\texttt{matteosilimbani@icrosetti.istruzioneer.it}\\
https://orcid.org/0000-0002-6617-2433
}

\begin{document}
\maketitle
\begin{abstract}
We enumerate all families of involutions avoiding a classical pattern of length four together with 4321. 
While the enumeration of permutations avoiding two  classical patterns is by now well understood, the case of pattern-avoiding involutions presents additional structural constraints that require dedicated techniques. In particular our main enumerative tool is Biane's bijection between involutions and labelled Motzkin paths.

The paper provides exact bivariate  generating functions (taking into account length and number of descents), and structural descriptions for all except one family. In the remaining case we provide a functional equation for the generating function and express it as an explicit continued fraction.

We make extensive use of experimental and software-supported methods. 
\end{abstract}

\noindent {\bf Keywords: pattern avoiding involution, Motzkin path, continued fraction}  

\noindent {\bf MSC2020:} 05A05 (primary); 05A15 (secondary).

\vspace{0.5 cm}

\section{Introduction}\label{intro}

The study of pattern-avoiding permutations has developed into a central area of enumerative combinatorics over the last three decades.  
Given a permutation $\pi\in S_n$ and a pattern $\tau\in S_k$, we say that $\pi$ \emph{avoids} $\tau$ if no subsequence of $\pi$ is order-isomorphic to $\tau$.  
The systematic study of such classes began with the foundational work of Knuth~\cite{Kn2}, and later expanded through the seminal contributions of Simion and Schmidt~\cite{Si}, among many others.  
Comprehensive accounts can be found, for instance, in the books by B\'ona~\cite{Bo} and Kitaev~\cite{Ki}.  

A substantial line of research has focused on permutations avoiding \emph{two} patterns of a fixed length.  
For two patterns of length four, the classification of the Wilf-equivalence classes and the enumeration of the corresponding permutation families is almost complete ~\cite{AlbertAtkinsonBrignall2011,AlbertAtkinsonBrignall2012,AlbertAtkinsonVatter2009,AlbertAtkinsonVatter2014,AlbertHombergerPantoneSharVatter2018,Atkinson1998,AtkinsonSaganVatter2012,Bevan2016b,Bevan2016a,BloomVatter2016,Bona1998,Callan2013a,Callan2013b,Kremer2000,Kremer2003,KremerShiu2003,Le2005,Miner2016,MinerPantone2018,Pantone2017,Vatter2006,Vatter2012} with a few notable exceptions: for example, the class of permutations avoiding $4321$ and $4231$ has not been enumerated, and it is conjectured ~\cite{AlbertHombergerPantoneSharVatter2018} that its generating function does not satisfy any algebraic differential equation.  

In parallel with the general theory of pattern-avoiding permutations, several authors have investigated \emph{involutions} avoiding specific patterns.  
An involution is a permutation $\pi$ satisfying $\pi=\pi^{-1}$, and its fixed-point and transposition structure imposes additional restrictions on the appearance of patterns.  
Classical contributions include B\'ona~\cite{Bonainv},Guibert~\cite{Gu,Guibert2001},  Barnabei et al.~\cite{Ba,Ba6}, and, more recently, Bean et al.~\cite{BeanGuttmannPantone2026}, among others.  
The avoidance of patterns by involutions often exhibits connections with lattice paths, continued fractions, and orthogonal polynomials, making enumeration strongly dependent on bijective or analytic techniques.

Rather than simply counting the elements of a pattern class, it is
often natural to refine the enumeration by taking into account further
permutation statistics, such as the number of descents, inversions or
fixed points, or the major index; a considerable literature is devoted
to the joint distribution of such statistics over pattern-avoiding
permutations and involutions (see, e.g.,~\cite{Ba4, Ba5,jointdis}). In this spirit,
all our generating functions keep track not only of the length but
also of the number of descents.

Experimental methods have become a standard tool in enumerative
combinatorics: computer-generated data are routinely used to guess
structural characterizations, generating functions and recurrences,
which are then proved by combinatorial or analytic arguments (see, for
instance, the recent papers~\cite{Chern05012026,Lewis06012026,Mansour03082026}).

\medskip

\noindent
\textbf{This paper.}  
In this work we complete the enumeration of involutions avoiding two patterns of length four, one of which is 4321.   
For each pattern $\tau,$ $|\tau|=4$, we determine the generating function of the corresponding class of involutions and describe its structure. Most of the generating functions we obtain are rational, a few are algebraic, and the remaining one, which admits no closed form, is conjectured to be non-D-finite.

Our main enumerative tool is \emph{Biane's bijection} between involutions and \emph{labelled Motzkin paths} (see~\cite{BIANE} and~\cite{Ba6}), which gives a precise correspondence between pattern constraints and restrictions on the labels of the associated paths.  
This bijective framework allows us to derive exact generating functions and obtain structural insight into the relevant classes.

The analogous problem for involutions avoiding $3412$ together with
another pattern was addressed by Egge~\cite{EGGE2004451}, who developed a
general procedure, based on continued fractions and Chebyshev
polynomials, which handles all the additional patterns in a uniform way.
In the case of $4321$ no such uniform approach seems to be available:
the way the avoidance of a second pattern is reflected by Biane's
bijection strongly depends on the pattern itself, and each case requires
an ad hoc analysis of the forbidden configurations in the associated
Motzkin paths. On the other hand, this approach has the advantage of
keeping track of the descent statistic very naturally, since descents of
a $4321$-avoiding involution correspond to weak peaks, a purely local
feature of the associated path. As a consequence, for each family we
obtain bivariate generating functions refining the enumeration by length
according to the number of descents. We obtain rational or algebraic generating functions in all cases except $\tau=4231.$ In this case, the corresponding generating function satisfies a functional
equation that can be
expressed as an explicit continued fraction, and the experimental
evidence suggests that it is not D-finite.
% The appearance of a
% continued fraction should not come as a surprise: continued fractions
% arise naturally in enumerative combinatorics, especially in the
% enumeration of lattice paths, as shown in the seminal work of
% Flajolet~\cite{flaj}.

Notice that we considered only the cases in which the set $T$ of avoided patterns consists of only one pattern $\tau$ with, obviously, its inverse. However, our procedure perfectly works for every set of patterns $T$ 
and all the cases where $T$ consists of genuinely different patterns can be easily deduced from our results. 

Our approach has been experimental in the following sense. We used a dedicated Python script has been used systematically
to guess the structure of the set of Motzkin paths corresponding to each family. The computer algebra system \textsc{Mathematica}~\cite{Mathematica} has been used to perform algebraic manipulations, verify identities and check the results against direct generation of involutions. All the code used is made freely available in electronic form in a dedicated github page. 

Notice that none of the bivariate refinements, and most of the sequences, appear in the OEIS~\cite{Sl}.

\medskip

\noindent
\textbf{Organization of the paper.}
Section~\ref{prelim} recalls basic definitions and symmetries, and
Section~\ref{sec:4321-bijection} Biane's bijection.
Section~\ref{sec:methodology} describes our experimental methodology.
Section~\ref{4321} deals with $I_n(4321)$; Section~\ref{sec:classes_char}
with the classes whose connected paths are characterized by forbidden
subsequences; Section~\ref{beyond} with those requiring additional
conditions; Section~\ref{exceptional} with the two exceptional classes,
including $I_n(4321,4231)$. Section~\ref{sec:conclusion} contains
concluding remarks and open problems.

\section{Preliminaries and Notation}\label{prelim}

Let $S_n$ denote the symmetric group on $\{1,2,\dots,n\}$.  
A permutation $\pi \in S_n$ \emph{contains} a pattern $p \in S_k$ if some 
subsequence of $\pi$ of length $k$ has the same relative order as $p$; 
otherwise $\pi$ \emph{avoids} $p$.

Given a permutation $\pi\in S_n,$ an index $i$ with $1\leq i\leq n-1$ is called a \textit{descent} for $\pi$ if $\pi_i>\pi_{i+1}.$ The number of descents of a permutation $\pi$ is denoted by $\operatorname{des}(\pi).$

A non-empty permutation $\pi\in S_n$ is \emph{connected} (or
\emph{indecomposable}) if there is no $k<n$ such that
$\pi(\{1,\dots,k\})=\{1,\dots,k\}$.

A permutation $\pi \in S_n$ is an \emph{involution} if $\pi^2 = \mathrm{id}$, 
equivalently $\pi = \pi^{-1}$.  
We denote by
\[
I_n = \{\pi \in S_n : \pi = \pi^{-1}\}
\]
the set of involutions of length $n$.
 
For patterns $p_1,\dots,p_r$ we use
$I_n(p_1,\dots,p_r)$ to denote the set of involutions in $I_n$ avoiding all listed patterns.

\begin{oss}\label{conn_patt}
If a permutation $\pi$ contains the pattern $\tau$ and $\tau$ is connected, then the subsequence corresponding to $\tau$ must lie inside one of the connected components of $\pi.$ In other terms, a permutation $\pi$ avoids a connected pattern $\tau$ if and only if all the connected components of $\pi$ avoid $\tau.$  
\end{oss}

\subsection{The inverse and reverse--complement operations}

\paragraph{Inverse.}
For a permutation $\pi \in S_n$, the inverse permutation $\pi^{-1}$ is defined by
$\pi^{-1}(\pi(i)) = i$.  
The following observation is fundamental.

\begin{lem}
\label{lem:inverse-avoidance}
If $\pi \in I_n$ and $\tau$ is any pattern, then
\[
\pi \text{ avoids } \tau 
\quad\Longleftrightarrow\quad
\pi \text{ avoids } \tau^{-1}.
\]
\end{lem}

% \begin{proof}
% A subsequence of $\pi$ has pattern $p$ if and only if the corresponding 
% subsequence of $\pi^{-1} = \pi$ has pattern $p^{-1}$.  
% Thus $\pi$ avoids $p$ exactly when it avoids $p^{-1}$.
% \end{proof}

\begin{cor}
\label{cor:add-inverses}
For any patterns $\tau_1,\dots,\tau_r$,
\[
I_n(\tau_1,\dots,\tau_r) = I_n(\tau_1,\dots,\tau_r,\tau_1^{-1},\dots,\tau_r^{-1}).
\]
\end{cor}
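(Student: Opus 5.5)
The corollary follows from Lemma~\ref{lem:inverse-avoidance} by a set inclusion in each direction, applied pattern by pattern.

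For the inclusion $I_n(\tau_1,\dots,\tau_r,\tau_1^{-1},\dots,\tau_r^{-1}) \subseteq I_n(\tau_1,\dots,\tau_r)$, observe that an involution avoiding every pattern in the longer list in particular avoids each $\tau_i$. No lemma is needed for this direction; it only reflects that adding patterns to the avoidance list shrinks the class.

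For the reverse inclusion, take $\pi \in I_n(\tau_1,\dots,\tau_r)$. Since $\pi$ is an involution and avoids $\tau_i$, Lemma~\ref{lem:inverse-avoidance} shows that $\pi$ also avoids $\tau_i^{-1}$, for each $i=1,\dots,r$. Hence $\pi$ avoids the whole enlarged list, and so lies in the right-hand set.

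If one also wants to justify the lemma itself, the key point is the following. Suppose the positions $i_1<\dots<i_k$ of $\pi$ realise the pattern $\tau$. Sort the values $\pi(i_1),\dots,\pi(i_k)$ increasingly, and read off the positions at which they occur. These positions form an occurrence of $\tau^{-1}$ in $\pi^{-1}$, because inverting the permutation swaps the roles of positions and values. Since $\pi^{-1}=\pi$, this is an occurrence of $\tau^{-1}$ in $\pi$. Applying the same argument with $\tau^{-1}$ in place of $\tau$ gives the converse. There is no real obstacle here; the only care needed is the index bookkeeping when swapping positions and values.
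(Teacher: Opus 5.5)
Your proof is correct and follows the paper's approach: the paper treats the corollary as an immediate consequence of Lemma~\ref{lem:inverse-avoidance}, namely the trivial inclusion together with the lemma applied to each $\tau_i$. Your justification of the lemma, by exchanging the roles of positions and values and using $\pi^{-1}=\pi$, is the standard argument the paper intends for that lemma.
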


\paragraph{Reverse--complement.}
Given $\pi \in S_n$, its \emph{reverse--complement} is the permutation
$\pi^{rc} \in S_n$ defined by
\[
\pi^{rc}(i) = n+1 - \pi(n+1-i).
\]
This operation reverses the one-line notation of $\pi$ and replaces each
value $x$ by $n+1-x$.  

\begin{lem}
\label{lem:rc-avoidance}
For any permutation $\pi \in S_n$ and any pattern $\tau$, one has
\[
\pi \text{ avoids } \tau 
\quad\Longleftrightarrow\quad
\pi^{rc} \text{ avoids } \tau^{rc}.
\]
\end{lem}

% \begin{proof}
% Geometrically, reverse--complement corresponds to a $180^\circ$ rotation
% of the permutation diagram.  
% A set of points has pattern $p$ if and only if its rotated copy has pattern
% $p^{rc}$; thus containment (and therefore avoidance) is preserved.
% \end{proof}

Since the map $\pi \mapsto \pi^{rc}$ preserves involutions and number of descents, we immediately obtain:

\begin{cor}
\label{cor:rc-bijection}
For any patterns $\tau_1,\dots,\tau_r$, the map
\[
\Phi : I_n(\tau_1,\dots,\tau_r) \longrightarrow I_n(\tau_1^{rc},\dots,\tau_r^{rc}), 
\qquad \Phi(\pi) = \pi^{rc},
\]
is a $\operatorname{des}$-preserving bijection.  In particular,
\[
|I_n(\tau_1,\dots,\tau_r)| = |I_n(\tau_1^{rc},\dots,\tau_r^{rc})|
\]
and $\operatorname{des}(\Phi(\pi))=\operatorname{des}(\pi).$
\end{cor}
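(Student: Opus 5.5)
The statement to prove is Corollary~\ref{cor:rc-bijection}. I will check three things: that $\Phi$ maps $I_n(\tau_1,\dots,\tau_r)$ into $I_n(\tau_1^{rc},\dots,\tau_r^{rc})$, that it is bijective, and that it preserves descents. The cardinality equality then follows at once.

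\emph{Step 1: $\Phi$ preserves involutions.} Write $\rho(i)=n+1-i$, so that $\pi^{rc}=\rho\circ\pi\circ\rho$ and $\rho^2=\mathrm{id}$. If $\pi^2=\mathrm{id}$, then
\[
(\pi^{rc})^2=\rho\pi\rho\rho\pi\rho=\rho\pi^2\rho=\mathrm{id},
\]
so $\pi^{rc}\in I_n$.

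\emph{Step 2: $\Phi$ respects the avoidance conditions.} By Lemma~\ref{lem:rc-avoidance}, applied to each $\tau_j$, $\pi$ avoids $\tau_j$ if and only if $\pi^{rc}$ avoids $\tau_j^{rc}$. Hence $\Phi$ maps $I_n(\tau_1,\dots,\tau_r)$ into $I_n(\tau_1^{rc},\dots,\tau_r^{rc})$.

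\emph{Step 3: bijectivity.} Reverse--complement is an involution on $S_n$, since $\rho(\rho\pi\rho)\rho=\pi$. Likewise $(\tau^{rc})^{rc}=\tau$ for every pattern $\tau$. So the same map in the opposite direction sends $I_n(\tau_1^{rc},\dots,\tau_r^{rc})$ back into $I_n(\tau_1,\dots,\tau_r)$ and is a two-sided inverse of $\Phi$. Therefore $\Phi$ is a bijection and the two sets have equal cardinality.

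\emph{Step 4: descents.} This is the only step needing a small computation. Take $1\le i\le n-1$. Then
\[
\pi^{rc}(i)>\pi^{rc}(i+1)\iff n+1-\pi(n+1-i)>n+1-\pi(n-i)\iff \pi(n-i)>\pi(n+1-i).
\]
The last condition says exactly that $n-i$ is a descent of $\pi$. The map $i\mapsto n-i$ is a bijection of $\{1,\dots,n-1\}$, so it matches the descents of $\pi^{rc}$ with those of $\pi$, and $\operatorname{des}(\Phi(\pi))=\operatorname{des}(\pi)$. No step is a real obstacle; the only points needing care are the index bookkeeping here and the use of $\rho^2=\mathrm{id}$ in Steps 1 and 3.
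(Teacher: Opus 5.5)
Your proof is correct and follows the paper's route. The paper simply notes that $\pi\mapsto\pi^{rc}$ preserves involutions and the number of descents and combines this with Lemma~\ref{lem:rc-avoidance}; you additionally write out the checks it leaves implicit, namely $\pi^{rc}=\rho\pi\rho$ with $\rho^2=\mathrm{id}$ and the matching of descent positions $i\mapsto n-i$.
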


\paragraph{Combined symmetries.}
The four operations
\[
\mathrm{id},\qquad (\cdot)^{-1},\qquad (\cdot)^{rc},\qquad 
(\cdot)^{-1} \circ (\cdot)^{rc}
\]
form a group of symmetries acting on permutations and patterns.
They preserve the property of being an involution and preserve pattern-avoidance
in the sense of Lemmas~\ref{lem:inverse-avoidance} and 
\ref{lem:rc-avoidance}.  

Thus two sets of patterns $(\tau_1,\dots,\tau_r)$ and $(\tau_1',\dots,\tau_r')$ are called 
\emph{equivalent} if one can be obtained from the other by applying the same 
symmetry to all patterns simultaneously.  
By Corollaries~\ref{cor:add-inverses} and~\ref{cor:rc-bijection}, all sets
$I_n(\tau_1,\dots,\tau_r)$ corresponding to patterns within the same symmetry class are in 
canonical bijection.

Our enumerative problem is thus reduced to the classes listed in Table \ref{tab:summary}.

\vspace{0.5 cm}

\section{Biane's bijection and $4321$-avoiding involutions}
\label{sec:4321-bijection}

One of the most effective tools for studying involutions avoiding the pattern $4321$ is a bijection due to Biane~\cite{BIANE}, which encodes involutions as labelled Motzkin paths.  
We briefly recall the construction in the form that is most convenient for our purposes, following the approach of Barnabei, Bonetti, and Silimbani~\cite{Ba6}.

\subsection{Motzkin paths}

A \emph{Motzkin path} of length $n$ is a lattice path in $\mathbb{Z}^2$ starting at $(0,0)$, ending at $(n,0)$, and consisting of the following steps:
\[
U = (1,1), \quad H = (1,0), \quad D = (1,-1),
\]
never going below the $x$-axis. We regard a Motzkin path $m$ of length $n$ as a word $m=s_1s_2\cdots s_n$
over the alphabet $\{U,H,D\}$.

A \emph{labelled Motzkin path} is a Motzkin path in which each down step is assigned a positive integer label not exceeding the height of its starting point.

We will repeatedly use the \emph{first return decomposition}: every
non-empty Motzkin path $m$ can be uniquely written either as $m=Hm'$ or
as $m=Um_1Dm_2$, where $m',m_1,m_2$ are (possibly empty) Motzkin paths
and $D$ is the first step of $m$ returning to the $x$-axis. This yields
the classical functional equation for the generating function
$M(x)=\sum_{m\in\mathcal M}x^{|m|}$ of Motzkin paths:
\begin{equation}\label{eq:first-return}
M(x)=1+xM(x)+x^2M(x)^2 .
\end{equation}

A \emph{connected} Motzkin path is a non-empty Motzkin path
whose only return to the $x$-axis is its final point.

\begin{defn}\label{def:subsequence}
Let $m=s_1s_2\cdots s_n$ be a Motzkin path and let $w=w_1w_2\cdots w_k$
be a word over $\{U,H,D\}$.
\begin{itemize}
    \item An \emph{occurrence} of $w$ in $m$ is a subsequence
    $s_{i_1}s_{i_2}\cdots s_{i_k}$ of $m$, with $i_1<i_2<\cdots<i_k$,
    such that $s_{i_j}=w_j$ for every $j$. We say that $m$
    \emph{contains} $w$ if $m$ has at least one occurrence of $w$, and
    that $m$ \emph{avoids} $w$ otherwise.
    \item An occurrence of $w$ in $m$ is a \emph{factor} of $m$ if its
    steps are consecutive, namely $i_{j+1}=i_j+1$ for every $j$.
\end{itemize}
\end{defn}

\begin{oss}
We stress that, when we say that a path contains or avoids a word $w$,
the steps of an occurrence of $w$ need not be consecutive. For instance,
the path $UHUDHD$ contains $HD$, $UUDD$ and $HHD$, while it avoids
$DU$. Whenever consecutive steps are required, we explicitly speak of
factors. When needed, we mark the steps of a particular occurrence
with accents, as in $\hat U\hat H\hat D\overline D$.
\end{oss}

\subsection{Definition of the bijection}

\begin{defn}[Biane's bijection]
\label{def:biane}
Given $\pi \in I_n$, scan its one-line notation from left to right and associate a step to each position $i$:
\begin{itemize}
    \item If $i$ is a fixed point (i.e.\ $\pi(i)=i$), we associate a horizontal step $H$.
    \item If $i < \pi(i)$ (opening of a transposition), we associate an up step $U$.
    \item If $i > \pi(i)$ (closing of a transposition), we associate a down step $D$, labelled by the rank of $\pi(i)$ among the currently open transpositions.
\end{itemize}
\end{defn}

During the scanning phase, a transposition is said to be \textit{active} if the up step corresponding to its smaller element has already been read, but the down step corresponding to its larger element has not. 

\begin{oss}
The height of the path at a given step corresponds to the number of currently open transpositions.  
The labels on down steps encode how these transpositions are matched.
\end{oss}

\begin{thm}[\cite{BIANE}]
\label{thm:biane}
The above construction defines a bijection between involutions of size $n$ and labelled Motzkin paths of length $n$.
\end{thm}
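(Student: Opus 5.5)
We prove Theorem~\ref{thm:biane} by checking that the construction of Definition~\ref{def:biane} yields a well-defined labelled Motzkin path, and then constructing an explicit inverse map.

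\emph{Well-definedness.} Fix $\pi\in I_n$ and scan positions $1,\dots,n$. Every non-fixed point belongs to a unique transposition $(a,b)$ with $a<b$. Position $a$ contributes a $U$ and position $b$ contributes a $D$.

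Let $h_i$ be the height after $i$ steps. Then $h_i$ equals the number of transpositions $(a,b)$ with $a\le i<b$, that is, the number of active transpositions after position $i$. In particular $h_i\ge 0$ for all $i$, and $h_n=0$, so the underlying word is a Motzkin path.

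When we read a closing position $b$, its partner $a=\pi(b)$ is among the currently active transpositions. The number of these is exactly the height $h_{b-1}$ at the starting point of the down step. Hence the rank of $a$ among the active openers lies in $\{1,\dots,h_{b-1}\}$, which is precisely the admissible range of labels.

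\emph{Inverse map.} Given a labelled Motzkin path $s_1\cdots s_n$, we maintain an ordered list $L$ of ``open'' positions, sorted increasingly, and scan $i=1,\dots,n$:
\begin{itemize}
\item if $s_i=H$, set $\pi(i)=i$;
\item if $s_i=U$, append $i$ to $L$;
\item if $s_i=D$ with label $\ell$, let $a$ be the $\ell$-th element of $L$, set $\pi(i)=a$ and $\pi(a)=i$, and delete $a$ from $L$.
\end{itemize}

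Each step is legal. Just before reading $s_i$, we have $|L|=h_{i-1}$, since $L$ grows with each $U$ and shrinks with each $D$. The labelling constraint gives $\ell\le h_{i-1}$, so the $\ell$-th element of $L$ exists. At the end $L$ is empty because the path ends at height $0$. Every position is therefore either fixed or paired exactly once, so $\pi$ is an involution.

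\emph{Mutual inverses.} We argue by induction on the scanning step. The invariant is that, after each step, $L$ coincides with the set of currently active transpositions of the involution being encoded or decoded. The label recorded by the encoding is the rank of $\pi(b)$ in this set, and the decoding selects the element of that rank. So decoding the encoding of $\pi$ returns $\pi$.

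Conversely, encoding the decoded involution reproduces the same step at each position: $H$ for fixed points, $U$ for the smaller element of a pair, $D$ for the larger one. It also reproduces the same label, since the rank of the selected $a$ in $L$ is $\ell$ by construction.

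The only point requiring care is this invariant linking ``active transpositions'' to the list $L$ together with the ranking convention. Everything else is routine bookkeeping, so we expect no real obstacle. Both maps being inverse to each other, the construction is a bijection.
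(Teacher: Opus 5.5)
Your proof is correct. The paper does not prove this statement: it cites Biane~\cite{BIANE}, in the formulation of~\cite{Ba6}. So your argument replaces the citation rather than offering an alternative to an argument in the text.

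Your route is the standard direct one. The identity $h_i=\#\{(a,b): a\le i<b\}$ gives both the Motzkin condition and the admissible label range $\{1,\dots,h_{b-1}\}$. The decoder with the sorted list $L$ inverts the encoding because, at every stage, $L$ is exactly the set of openers of the active transpositions. For the decoded $\pi$, $a$ lies in $L$ after step $i$ if and only if $s_a=U$ and $a\le i<\pi(a)$.

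Compared with the bare citation, your argument makes explicit an invariant the paper uses repeatedly without proof. It appears in the remark that the height equals the number of open transpositions, and in the description of connected components via returns to the $x$-axis. It also underlies reading label $1$ as ``close the earliest opened transposition'', in Proposition~\ref{prop:unitary-labels} and in the proof of Theorem~\ref{thm:4231}.

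Your choice to rank openers in increasing order is the one consistent with these later uses. Any fixed ordering would still give a bijection. With the reverse ordering, however, all-$1$ paths would encode non-crossing rather than non-nesting involutions, and Proposition~\ref{prop:unitary-labels} would fail: $3412=(1\,3)(2\,4)$ avoids $4321$ but would receive label $2$ at position $3$.
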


\medskip

For involutions avoiding $4321$, the bijection simplifies considerably:

\begin{prop}
\label{prop:unitary-labels}
An involution avoids $4321$ if and only if all labels in the associated Motzkin path are equal to $1$.
\end{prop}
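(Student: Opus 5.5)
The plan is to go through an intermediate characterization: an involution $\pi$ avoids $4321$ if and only if it has no two \emph{nested} transpositions, that is, no $a<b<c<d$ with $\pi(a)=d$ and $\pi(b)=c$. I will then show that absence of nesting is exactly the condition that every down step gets label $1$. I read the labelling of Definition~\ref{def:biane} as ranking the open transpositions by increasing smaller element. So label $1$ means the closing step $i$ closes the earliest-opened active transposition. This convention is forced: the nested involution $4321$ must receive a label $2$, while the crossing involution $3412$ receives only labels $1$.

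\textbf{Step 1: the labels detect nesting.} If every label is $1$, transpositions close in the same order in which they open. Hence for two transpositions $(a\,d)$ and $(b\,c)$ with $a<b$ we get $d<c$, so there is no nesting. Conversely, suppose some down step at position $c$ has label $\ge 2$. Then at time $c$ there is an active transposition $(a\,d)$ opened before $b=\pi(c)$ and not yet closed. This gives $a<b<c<d$, which is a nesting. Thus all labels equal $1$ if and only if $\pi$ has no nested pair.

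\textbf{Step 2: a nesting gives $4321$.} A nesting $a<b<c<d$ with $\pi(a)=d$, $\pi(b)=c$ yields, at positions $a,b,c,d$, the values $d,c,b,a$. This is an occurrence of $4321$.

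\textbf{Step 3: no nesting forbids $4321$.} Split the positions of $\pi$ into three classes:
\begin{itemize}
\item openers $\{i:\pi(i)>i\}$;
\item closers $\{i:\pi(i)<i\}$;
\item fixed points.
\end{itemize}
On the fixed points, $\pi$ is clearly increasing.

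On the openers, $\pi$ is also increasing. Indeed, if $a<b$ are openers with $\pi(a)>\pi(b)$, then either $a<b<\pi(b)<\pi(a)$, which is a nesting, or $\pi(b)\le b$. The latter is impossible because $b$ is an opener.

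On the closers, $\pi$ is increasing as well. This follows from the openers case, since the closers are exactly the images of the openers and $\pi=\pi^{-1}$.

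A decreasing subsequence can therefore contain at most one entry from each of the three classes. Its length is at most $3$, so $\pi$ avoids $4321$.

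Combining Steps 1 through 3 proves the proposition. The only delicate point I expect is Step 1, namely fixing the label convention and checking that a label $\ge2$ really produces a nesting rather than something weaker. The partition argument in Step 3 is the heart of the proof but is short.
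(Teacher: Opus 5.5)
Your proof is correct and complete. There is no proof in the paper to compare it with: the paper states this proposition as a known property of Biane's bijection, following \cite{BIANE,Ba6}, and moves on. Your reading of the labelling convention (label $1$ means the down step closes the earliest-opened active transposition) is the one the paper uses, as its worked example confirms.

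The closest argument in the paper is the proof of Proposition~\ref{prop:increasing-runs} together with Remark~\ref{oss:decreasing}. Your Step~3 is essentially that argument:
\begin{itemize}
\item an inversion among openers is a nested pair;
\item closers are increasing, by $\pi=\pi^{-1}$ (equivalently, first-in first-out closing);
\item fixed points are trivially increasing.
\end{itemize}
The difference is the hypothesis. The paper runs the argument assuming $\pi$ avoids $4321$, and only extracts the three increasing subsequences. You run it under the a~priori weaker hypothesis that $\pi$ has no nesting, and conclude $4321$-avoidance.

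Routing everything through the intermediate characterization ``$4321$-avoiding involutions are exactly the nonnesting ones'' is what makes your proof self-contained. Step~1 links nonnesting to all labels being $1$ using only the first-in first-out behaviour of the scan. Steps~2 and~3 link nonnesting to $4321$-avoidance. Proposition~\ref{prop:increasing-runs} and Remark~\ref{oss:decreasing} then follow as by-products rather than needing separate arguments.
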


\noindent
Thus $4321$-avoiding involutions are in bijection with \emph{unlabelled} Motzkin paths. We will denote the unlabelled Motzkin path corresponding to $\pi\in I_n(4321)$ by $\beta(\pi).$

\begin{prop}\label{prop:increasing-runs}
Let $\pi\in I_n(4321)$. Then the entries of $\pi$ corresponding to the
$U$ steps (respectively, $D$ steps, $H$ steps) of $\beta(\pi)$ form an
increasing subsequence of $\pi$.
\end{prop}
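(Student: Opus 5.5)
Let $\pi\in I_n(4321)$. By Proposition~\ref{prop:unitary-labels}, every down step of $\beta(\pi)$ has label $1$. I will read this as a statement about the matching of openers with closers, and then derive the three monotonicity claims from it.

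\textbf{Step 1: FIFO matching.} Label $1$ means that each closing position $j$ (a $D$ step) is matched with the smallest currently open opener. So transpositions close in the order in which they opened. Hence if $i<i'$ are two openers, with partners $\pi(i)$ and $\pi(i')$, then $\pi(i)<\pi(i')$. The case $\pi(i')<\pi(i)$ cannot occur: at time $\pi(i')$ both $i$ and $i'$ are open, and label $1$ would force $i$ to close first.

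\textbf{$U$ steps.} This is exactly Step 1: the values at up-step positions are the partners $\pi(i)>i$, and they increase with $i$.

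\textbf{$D$ steps.} If $j<j'$ are closing positions, then $\pi(j)$ and $\pi(j')$ are their openers. FIFO says the earlier closer takes the earlier opener, so $\pi(j)<\pi(j')$. Equivalently, the map from openers to closers is increasing, hence so is its inverse.

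\textbf{$H$ steps.} Fixed points satisfy $\pi(i)=i$, so the values at $H$ positions are just their positions. These are trivially increasing.

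\textbf{Alternative route.} If one prefers not to lean on the precise meaning of labels, one can argue directly from avoidance of $4321$. Suppose there are openers $i<i'$ with $\pi(i)>\pi(i')$. Since $i'<\pi(i')$, the four positions satisfy $i<i'<\pi(i')<\pi(i)$, with values $\pi(i),\pi(i'),i',i$. These values are decreasing, which gives a $4321$ pattern. The $D$ case then follows by the involution symmetry.

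\textbf{Main obstacle.} The only delicate point is the precise reading of ``rank among currently open transpositions.'' The rank must be taken in increasing order of openers, so that label $1$ means the oldest open transposition. I would confirm this convention against Proposition~\ref{prop:unitary-labels}, for instance with $\pi=2143$ versus $\pi=3412$. The direct $4321$ argument sidesteps the issue entirely, so I would present that as the main proof.
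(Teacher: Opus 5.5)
Your proposal is correct and follows the paper's proof: for openers $i<i'$ with $\pi(i)>\pi(i')$, the values $\pi(i),\pi(i'),i',i$ at positions $i<i'<\pi(i')<\pi(i)$ form a $4321$; the $D$ case follows because the opener-to-closer map is increasing; and fixed points are trivially increasing. Your FIFO route via Proposition~\ref{prop:unitary-labels} is also valid under the paper's convention that label $1$ means the earliest open transposition. However, $2143$ cannot confirm that convention, since each of its down steps has only one open transposition; $3412$ (or $4321$) is the informative test.
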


\begin{proof}
If $i<j$ are two positions corresponding to $U$ steps with
$\pi(i)>\pi(j)$, then $i<j<\pi(j)<\pi(i)$ and the entries in positions
$i,j,\pi(j),\pi(i)$ are $\pi(i),\pi(j),j,i$, an occurrence of $4321$.
The entries corresponding to $D$ steps are the positions of the $U$
steps, listed in the order in which the transpositions are closed, which
is increasing by the previous argument. Finally, the entries corresponding
to $H$ steps are fixed points, hence trivially increasing.
\end{proof}

\begin{oss}\label{oss:decreasing}
As a consequence, if $\pi\in I_n(4321)$, the entries of any decreasing
subsequence of $\pi$ correspond to steps of $\beta(\pi)$ of pairwise
different kinds.
\end{oss}

\begin{defn}
A \emph{weak peak} of a Motzkin path $m$ is an occurrence of two
consecutive steps of the form $UD$, $UH$ or $HD$. We denote by
$\wep(m)$ the number of weak peaks of $m$. A \textit{weak valley} is an occurrence in $m$ of two consecutive steps of the form $DU,$ $DH$ or $HU.$
\end{defn}

The following lemma is a direct consequence of the definition of the Biane map, when restricted to 4321-avoiding involutions. 

\begin{lem}\label{discese_picchi_deboli}
For every $\pi\in I_n(4321)$ we have $\operatorname{des}(\pi)=\wep(\beta(\pi))$.
More precisely, $i$ is a descent of $\pi$ if and only if the $i$-th and
$(i+1)$-st steps of $\beta(\pi)$ form a weak peak.
\end{lem}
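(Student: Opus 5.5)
We prove the second, position-by-position statement; summing over $i$ then gives $\operatorname{des}(\pi)=\wep(\beta(\pi))$. Fix $i$ with $1\le i\le n-1$ and let $s_i,s_{i+1}$ be the $i$-th and $(i+1)$-st steps of $\beta(\pi)$. There are nine ordered pairs of step types. For each one we decide whether $\pi(i)>\pi(i+1)$, using only two facts: the definition of Biane's bijection, and Proposition~\ref{prop:increasing-runs}, which says that entries of the same step type appear increasingly. Proposition~\ref{prop:unitary-labels} guarantees that each $D$ closes the earliest active transposition, but we will not need more than Proposition~\ref{prop:increasing-runs}.

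\textbf{Equal types.} If $s_i=s_{i+1}$, then $\pi(i)<\pi(i+1)$ by Proposition~\ref{prop:increasing-runs}. None of $UU$, $HH$, $DD$ is a weak peak, so these cases agree.

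\textbf{Weak peaks.}
\begin{itemize}
\item $UD$: the entry $\pi(i)$ is larger than $i$, while $\pi(i+1)<i+1$. Hence $\pi(i+1)\le i<\pi(i)$, and $i$ is a descent.
\item $UH$: here $\pi(i)>i$, so $\pi(i)\ge i+1=\pi(i+1)$. Since $\pi(i)\neq\pi(i+1)$, the inequality is strict and $i$ is a descent.
\item $HD$: here $\pi(i)=i$ and $\pi(i+1)<i+1$. Since $\pi(i+1)\ne i$, we get $\pi(i+1)<i$, so $i$ is a descent.
\end{itemize}

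\textbf{Weak valleys.}
\begin{itemize}
\item $DU$: $\pi(i)<i<i+1<\pi(i+1)$, so $i$ is an ascent.
\item $DH$: $\pi(i)<i<i+1=\pi(i+1)$, so $i$ is an ascent.
\item $HU$: $\pi(i)=i<i+1<\pi(i+1)$, so $i$ is an ascent.
\end{itemize}

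The nine cases are exhausted, and in each one $i$ is a descent exactly when $s_is_{i+1}$ is a weak peak.

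The only non-local ingredient is the equal-type case. For $DD$ this is the only place where $4321$-avoidance is genuinely used: in a general involution two consecutive closers can form a descent. Proposition~\ref{prop:increasing-runs} settles it. The $UU$ and $HH$ cases are also covered there, although $HH$ is trivial. Every other case is a one-line inequality.
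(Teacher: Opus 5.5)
Your nine-case analysis is correct, and it is exactly the unpacking the paper has in mind when it calls the lemma a direct consequence of the definition of Biane's map restricted to $4321$-avoiding involutions. The paper gives no further proof, and the only non-local input is Proposition~\ref{prop:increasing-runs} for the equal-type pairs. One small correction to your closing remark: the $UU$ case also genuinely needs $4321$-avoidance, not just $DD$, since in the involution $4321$ the two initial $U$ steps sit at a descent.
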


For $k<n$, we have $\pi(\{1,\dots,k\})=\{1,\dots,k\}$ if and only if no transposition is active after position $k$, that is, if and only if $\beta(\pi)$ touches the $x$-axis after its $k$-th step.
 Hence $\pi$ is connected if and only if
$\beta(\pi)$ is.  Moreover, the
decomposition of $\pi$ into connected components corresponds to the
decomposition of $\beta(\pi)$ into its connected factors.

We point out also the following known fact. Given any involution $\pi\in I_n(4321),$ the Motzkin path $\beta(\pi^{rc})$ corresponding to $\pi^{rc}$ is obtained from $\beta(\pi)$ by reflection in a vertical line.

\subsection{An example}

We illustrate the bijection with a simple example.

\begin{example}
Consider the involution
\[
\pi = (1\,3)(2\,5)(4)(6) = 351426 \in I_6(4321).
\]
Scanning $\pi$ from left to right, positions $1$ and $2$ open the
transpositions $(1\,3)$ and $(2\,5)$, position $3$ closes $(1\,3)$,
position $4$ is a fixed point, position $5$ closes $(2\,5)$ and
position $6$ is a fixed point. Hence $\beta(\pi)=UUDHDH$, shown in
Figure~\ref{fig:example-bijection}. Since $\pi$ avoids $4321$, every
down step closes the earliest open transposition, so all labels are
equal to $1$ and can be omitted.
Notice that the entries $3,5$ (corresponding to $U$ steps), $1,2$
($D$ steps) and $4,6$ ($H$ steps) form increasing subsequences of $\pi$,
and that the two descents of $\pi$, in positions $2$ and $4$, correspond
to the two weak peaks $UD$ and $HD$ of $\beta(\pi)$.
\end{example}

\begin{figure}[h]
\centering

\begin{tikzpicture}[scale=0.8]
% axes
\draw[->] (0,0) -- (7,0);

% path
\draw[thick]
(0,0) -- (1,1)   % U
-- (2,2)         % U
-- (3,1)         % D
-- (4,1)         % H
-- (5,0)         % D
-- (6,0);        % H

% points
\foreach \x/\y in {0/0,1/1,2/2,3/1,4/1,5/0,6/0}
    \fill (\x,\y) circle (2pt);

% entries of pi below the steps
\foreach \x/\v in {0.5/3,1.5/5,2.5/1,3.5/4,4.5/2,5.5/6}
    \node[below] at (\x,-0.1) {\small $\v$};

\end{tikzpicture}

\caption{The involution $\pi=(1\,3)(2\,5)(4)(6)=351426$ and its
associated Motzkin path $\beta(\pi)=UUDHDH$.}
\label{fig:example-bijection}
\end{figure}

\section{Experimental methodology}
\label{sec:methodology}

In this section we explain the experimental methodology that we followed for every set of patterns to be avoided together with 4321. Notice that we report only the cases in which $T$ consists of only one pattern $\tau$ with, possibly, its inverse. However, the procedure applies to any set $T.$

For every set of patterns $T,$
the characterizations of the Motzkin paths associated with the classes
$I_n(4321,T)$  were first
obtained experimentally, and then proved. In this section we describe
the computational procedure we followed, which is the same for all the
sets $T$ considered.

In many cases, the structure of a non-connected involution
in $I_n(4321,T)$ is easily described in terms of its connected
components (see Remark~\ref{conn_patt} and, e.g., Lemma~\ref{Lemma_1243} and
Theorem~\ref{1324_notc}). Therefore, the experimental part of our work
focuses on connected involutions, where the combinatorics of the
problem is concentrated.

\subsection{Detecting forbidden subsequences}

For each set $T$ we
proceeded as follows, by means of a Python script.

\begin{enumerate}
    \item We generated all connected involutions in $I_n(4321,T)$ for
    every $n$ up to a given fixed value $N$, say $N=20.$
    \item For each of them we computed the associated Motzkin path
    $\beta(\pi)$, regarded as a word over $\{U,H,D\}$. By
    Proposition~\ref{prop:unitary-labels}, labels can be omitted.
    \item We determined the set $\mathcal W^N_L(T)$ of the words $w$
    over $\{U,H,D\}$ of length at most $L$ that do not occur as a
    subsequence (in the sense of Definition~\ref{def:subsequence}, hence
    not necessarily consecutively) of any of the paths obtained in the
    previous step, and that are minimal with respect to this property,
    namely that contain no shorter word with the same property.
\end{enumerate}

We will call this procedure \texttt{guess[T]}.

\subsection{From conjectures to theorems}

The output of the procedure above is a conjecture of the form
\begin{quote}
\emph{a connected involution $\pi\in I_n(4321)$ avoids every pattern in $T$ if and
only if $\beta(\pi)$ avoids every word in $\mathcal W^N_L(T)$, for a given $L$ and a sufficiently large $N.$}
\end{quote}
The data only support this statement for $n\le N$ and for short words; its proof for every
$n$ requires a combinatorial argument that exploits the properties of
Biane's bijection,
and that has to be tailored to the specific patterns in $T$. 

In many cases the forbidden subsequences directly yield an explicit
description of the admissible connected paths as a finite list of
shapes, such as $U^aH^bD^a$ (see, e.g.,
Theorem~\ref{connected_1243}). In a few cases the forbidden
subsequences alone do not characterize the class, and additional
conditions, for instance on the heights of some steps, are needed (the cases of $T=\{1324\}$ and $T=\{2341\}$). 
In these cases the
experimental data are used as a guide to identify the correct
conditions.

In the hardest cases ($T=\{3421,4312\}$ and $T=\{4231\}$), our computation does not provide any set $\mathcal W^N_L(T)$ since, in these cases, the paths cannot be described as avoiding particular subsequences. In these cases, ad hoc techniques have to be used.

\subsection{Generating functions}

Once the structure of the admissible paths has been established, we
derive the bivariate generating function 
\[
F_T(x,y)=\sum_{n\ge0}\ \sum_{\pi\in I_n(4321,T)} x^n y^{\operatorname{des}(\pi)}
\]
by counting the corresponding paths according to their length and
number of weak peaks, in view of Lemma~\ref{discese_picchi_deboli}.
Depending on the case, this is done either by directly summing over the
shapes of the admissible paths, or by means of a system of functional
equations obtained from the first return
decomposition~\eqref{eq:first-return}. Algebraic manipulations were
performed with the computer algebra system \textsc{Mathematica}~\cite{Mathematica}. As a
final check, for each set of patterns $T$ we compared the coefficients of
the Taylor expansion of $F_T(x,y)$ with the joint distribution of
length and descents obtained by exhaustive generation of the
involutions in $I_n(4321,T).$

\medskip
\noindent\textbf{Notation.}
To avoid repetitions, throughout the following subsections we adopt
the following conventions. In each subsection, $T$ denotes the set of
patterns avoided, together with $4321$, by the involutions studied
there.
We will always denote by $F_T(x,y)$
the generating function of the whole set of involutions
avoiding $4321$ and all the patterns in $T$, counted by length and
number of descents, and we denote by $G_T(x,y)$ the analogous
generating function of the \emph{connected} involutions in this set.
By Lemma~\ref{discese_picchi_deboli}, $F_T(x,y)$
and $G_T(x,y)$ are also the generating functions of the corresponding
Motzkin paths (respectively, connected Motzkin paths), counted by
length and number of weak peaks.
\medskip

\section{$I_n(4321)$}\label{4321}

We begin with the study of the distribution of descents over the set of all \(4321\)-avoiding involutions.
Recall that, by the Biane's bijection, this is equivalent to studying the distribution of weak peaks over the set of Motzkin paths. 

Let
$$
M(x,y)=\sum_{m\in\mathcal{M}}x^{|m|}y^{\wep(m)},
$$

where \(\mathcal{M}\) denotes the set of Motzkin paths, \(|m|\) is the length of \(m\), and \(\wep(m)\) is the number of weak peaks of \(m\).

We further define \(M_i(x,y)\), \(M_f(x,y)\), \(M_{if}(x,y)\), and \(M_0(x,y)\) to be the generating functions for Motzkin paths that, respectively,

\begin{itemize}
\item start with an \(H\)-step but do not end with an \(H\)-step;
\item end with an \(H\)-step but do not start with an \(H\)-step;
\item both start and end with an \(H\)-step;
\item neither start nor end with an \(H\)-step.
\end{itemize}

Note that every Motzkin path contributes exactly to one of these generating functions. In particular, the one-step path \(H\) is included in \(M_{if}(x,y)\), while the empty path is included only in \(M_0(x,y)\).

The following system of functional equations is easily deduced by applying the first-return decomposition to the Motzkin paths:

$$
\begin{aligned}
M_i(x,y) &= x\bigl(M_i(x,y)+M_0(x,y)-1\bigr),\\
M_f(x,y) &= x\bigl(M_f(x,y)+M_0(x,y)-1\bigr),\\
M_{if}(x,y) &= x^2\bigl(M_i(x,y)+M_0(x,y)+M_{if}(x,y)+M_f(x,y)\bigr)+x,\\
M_0(x,y) &= 1+x^2\Bigl(M_0(x,y)-1+y+yM_i(x,y)+yM_f(x,y)+y^2M_{if}(x,y)\Bigr)\cdot \\ &
\bigl(M_0(x,y)+M_i(x,y)\bigr).
\end{aligned}
$$

The previous system of equations can be solved by any computer algebra system to give, for
$M(x,y)=M_i(x,y)+M_f(x,y)+M_{if}(x,y)+M_0(x,y),$ the following expression
$$
M(x,y)=\frac{N(x,y)}{D(x,y)}
$$
where
$$
\begin{aligned}
N(x,y)={}&-x^3y^2+(2x^3-x^2)y-x^3+x^2-x+1-\sqrt{P(x,y)},
\end{aligned}
$$
$$
D(x,y)
=
2x^4 y^2-4(x^4-x^3)y+2x^4-4x^3+2x^2,
$$
with
$$
\begin{aligned}
P(x,y)=&x^6y^4-2(2x^6-x^5)y^3+(6x^6-6x^5-x^4-2x^3)y^2 \\
&-2(2x^6-3x^5-x^4+x^3+x^2)y \\ &+x^6-2x^5-x^4+4x^3-x^2-2x+1.
\end{aligned}
$$

\section{Classes characterized by forbidden subsequences}\label{sec:classes_char}

In this Section we study all the families of involutions avoiding 4321 and another pattern of length 4 in which the characterization of the corresponding connected Motzkin paths turns out to be exactly the one provided by procedure \texttt{guess[T]}.

\subsection{$I_n(4321,1234)$}\label{1234}

For $n\geq 10,$ there are no involutions of this type by the Erd\H{o}s--Szekeres theorem~\cite{Erd_Sz}.

\subsection{$I_n(4321,1243)$}\label{1243}

\begin{thm}\label{connected_1243}
A connected involution $\pi$ avoids 4321 and 1243 if and only if the 
connected
path $\beta(\pi)$ 
avoids $HHU,$ $DDU,$ $DHU$ and $DDH.$

Those paths are precisely the paths of the following forms
\begin{itemize}
    \item 
    \textbf{Type 0}
    $\qquad H$ or $UH^aD,$ with $a\geq 0,$
    \item 
    \textbf{Type 1} $\qquad U^aDU^bH^cD^{a+b-1}$ with $a>1, b>0, c\geq 0,$
    \item \textbf{Type 2} $\qquad U^aHU^bDU^cH^dD^{a+b+c-1} $ with $a>0,b\geq 0,c>0,d\geq 0,$
    \item
    \textbf{Type 3} $\qquad U^aH^bDH^cD^{a-1} $ with $a>1,b\geq 0, c\geq 0,$
    \item \textbf{Type 4} $\qquad U^aHU^bH^cDH^dD^{a+b-1}$ with $a>0,b>0,c\geq 0, d\geq 0.$
\end{itemize}
\end{thm}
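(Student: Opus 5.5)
We split the proof into two parts: the equivalence between avoidance of $1243$ and avoidance of the four words, and the translation of the word conditions into the five listed shapes. Throughout, $\pi$ is a connected involution in $I_n(4321)$, and $\beta(\pi)$ carries only labels $1$ by Proposition~\ref{prop:unitary-labels}. So each $D$ step closes the earliest open transposition (first-in first-out), and the values read at the $U$, $D$, $H$ steps form increasing subsequences (Proposition~\ref{prop:increasing-runs}).

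\emph{Necessity.} For each of $HHU$, $DDU$, $DHU$, $DDH$ we exhibit an occurrence of $1243$ (or its inverse, which is $1243$ itself) from any occurrence of the word in a connected path. The key observation is the following. Connectedness means the path is positive strictly inside. Hence any step $s$ occurring before a step $t$ with nothing to return in between lies under an open transposition that is closed after $t$. Concretely:
\begin{itemize}
\item For $HHU$ with fixed points $f_1<f_2$ and an opening $j$ at position $p>f_2$, the value $\pi(p)>p>f_2$. Since the path is positive at $f_1$, some transposition $(a,b)$ with $a<f_1<b$ is active there. Choosing it suitably, we look for an occurrence such as $a\,f_1\,\pi(p)\,p$-type configurations giving $1243$.
\item For $DDU$ and $DHU$, two closings (or a closing and a fixed point) precede an opening whose partner is larger. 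The two closings have increasing values $<$ their positions, and the new opening gives a large value followed later by its smaller partner position. Together these yield $1\,2\,4\,3$.
\item For $DDH$, we use the two closing values, the fixed point, and the transposition still active across the $H$ (positivity), closing later. This gives the pattern $12\,4\,3$ via its large value placed before the fixed point value.
\end{itemize}
I expect that in each case one must pick the right active transposition (the one opened first or last) to guarantee the order; this bookkeeping is the main obstacle.

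\emph{Sufficiency.} Conversely, suppose $\beta(\pi)$ avoids the four words, and suppose $\pi$ contains $1243$ at positions $i_1<i_2<i_3<i_4$. By Remark~\ref{oss:decreasing}, the entries playing $4$ and $3$ correspond to different step types, and they come after entries $1,2$ smaller than both. We classify by the step types of the four entries, using the increasing-run property, and show that each case forces one of $HHU$, $DDU$, $DHU$, $DDH$ (possibly via the partner positions of the involved transpositions). It is simplest to first prove the shape classification and then check directly that each shape avoids $1243$. Indeed, for each shape the involution is explicit, for example $U^aH^bD^a$ gives a layered-like permutation, and checking $1243$-avoidance is a short direct verification.

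\emph{Shapes.} Avoidance of $DDU$, $DHU$ means that after the second non-$U$ step following a $D$ there are no more $U$'s. Also, no $U$ occurs after two $H$'s, and after two $D$'s there are only $D$'s (by $DDH$, $DDU$). Thus the path is $U^{a}$, then a short middle word containing at most one $D$ before the last $U$ and at most one $H$ before the last $U$, then $H$'s and $D$'s with at most one $D$ before the final $H$. Enumerating the positions of the possible single $H$ and single $D$ inside the $U$-block, together with connectedness (which forbids returning to height $0$ early, hence $a>1$ when a $D$ precedes further steps), yields exactly Types 0--4.
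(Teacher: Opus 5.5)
Your architecture matches the paper's. Each forbidden word yields an occurrence of $1243$, and conversely avoidance of the four words forces one of the listed shapes, each of which is checked directly. The gap is in the first half. You leave it as ``bookkeeping'', and as sketched it fails in two of the four cases.

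\emph{The $HHU$ case.} A transposition $(a\,b)$ active at $f_1$ is of no use. Its entry $b$ sits before $f_1$ but exceeds $f_1$, and its entry $a$ sits in position $b>f_1$. So it cannot supply a $1$ in front of $f_1$, and your configuration ``$a\,f_1\,\pi(p)\,p$'' is not a subsequence in that order. Nothing beyond the word is needed. If $f_1<f_2$ are the fixed points and $p>f_2$ is the opener, the entries in positions $f_1<f_2<p<\pi(p)$ are $f_1,f_2,\pi(p),p$, which is already $1243$.

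\emph{The $DDU$ and $DHU$ cases.} $DDU$ gives $\pi(q_1),\pi(q_2),\pi(p),p$ in positions $q_1<q_2<p<\pi(p)$, with $\pi(q_1)<\pi(q_2)$ by Proposition~\ref{prop:increasing-runs}. $DHU$ gives $\pi(q),f,\pi(p),p$.

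\emph{The $DDH$ case.} The fixed point $f$ must play the $4$. The $3$ must be the entry at the \emph{closing} position $j$ of a transposition $(i\,j)$ active at $f$, which exists by connectedness. Positions $q_1<q_2<f<j$ then carry $\pi(q_1),\pi(q_2),f,i$, and $\pi(q_2)<i$ because the values on $D$ steps increase. Your sentence suggests taking the large entry $j$ in the opening position $i$ as the $4$; that fails whenever $i<q_2$. For example, for $UUUDDHD$, i.e.\ $\pi=4\,5\,7\,1\,2\,6\,3$, the only transposition active at the $H$ is $(3\,7)$, opened before both $D$ steps.

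Two smaller points. First, in the shape enumeration, connectedness imposes $a+b\ge 2$ in Type~2 (notation of the statement), not a condition of the form $a>1$. For $a=1$, $b=0$ one gets $UHDU^cH^dD^c$, which avoids the four words but returns to the axis after three steps. So ``exactly Types 0--4'' needs this restriction, which is the one actually used in the generating function of Theorem~\ref{GF_1243}.

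Second, your ``short direct verification'' of the shapes can be made uniform. In an occurrence of $1243$, the entries playing $4$ and $3$ must lie on steps $UH$, $UD$ or $HD$, in this order. This follows from Remark~\ref{oss:decreasing} together with $\pi(p)>p$ on $U$ steps and $\pi(p)<p$ on $D$ steps. Type~0 is trivial. In each of Types 1--4, the entries preceding the $4$ and smaller than the $3$ are among $\pi(1)$, the fixed point in position $a+1$ (if any) and the entry $1$. These appear in this order with decreasing values, so no $12$ can precede the $43$.
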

\proof
First of all, observe that a path $m$ is of one of the previous types if and only if it avoids each of the patterns HHU, DDU, DHU and DDHD. Notice also that a connected path avoids DDHD if and only if it avoids DDH.

If the connected path $m$ contains one of the patterns HHU, DDU, DHU and DDHD, it is easily seen that the involution $\beta^{-1}(m)$ contains 1243. 

On the other hand,   each of the paths of these forms corresponds under $\beta^{-1}$ to a connected involution avoiding 1243.
\endproof

\begin{lem}\label{Lemma_1243}
 A non-empty involution $\pi\in I_n(4321)$ avoids $1243$ if and only if the path $\beta(\pi)$ is of one of the following  types
 \begin{itemize}
     \item \textbf{Type A} $\qquad mH^t,$ where $m$ is connected of type 0,1,2,3 or 4 and $t\geq 0,$ 
     \item \textbf{Type B}  $\qquad m'U^aH^bD^aH^c$ where $m'=H,UD$ or $UHD$ and $a>0,b\geq 0, c\geq 0.$
 \end{itemize}
   
\end{lem}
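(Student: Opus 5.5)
The proof rests on the decomposition of $\pi$ into connected components and on Remark~\ref{conn_patt}. The pattern $1243$ is \emph{not} connected: it decomposes as $1\oplus 1\oplus 21$. So an occurrence of $1243$ may spread over several components of $\pi$, and the idea is to control how it can be split. Write $\beta(\pi)=c_1c_2\cdots c_r$ as the concatenation of its connected factors, i.e. $\pi=\pi_1\oplus\cdots\oplus\pi_r$. The components are then of three kinds: a fixed point, giving the factor $H$; the transposition $21$, giving $UD$; and components of length $\ge 3$.

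\emph{Occurrences across components.} Every component of length $\ge2$ contains a descent, hence a copy of $21$. Every component contains a copy of $1$, and every component of length $\ge3$ contains a copy of $12$. (A connected involution of length $\ge3$ avoiding $4321$ is not decreasing, since the only decreasing involutions are $1$, $21$ and $321$, and $321$ is not connected.) Hence $\pi$ avoids $1243$ if and only if the following four conditions hold.
\begin{enumerate}
\item Each component avoids $1243$. By Theorem~\ref{connected_1243}, each connected factor is then of Type~0--4.
\item No component of length $\ge 2$ is preceded by two or more components.
\item No component of length $\ge2$ is preceded by a component of length $\ge3$. This comes from the split $12\mid 43$.
\item Inside the component $c_j$ with $j\ge2$, there is no occurrence of $132$. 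This comes from the split $1\mid 243$, where the entry $1$ is taken in an earlier component.
\end{enumerate}
One more split is possible, namely $124\mid 3$, which requires a copy of $123$ in a component followed by a further component. A copy of $123$ followed by any later component already gives the occurrence $12\mid43$ or $123\mid4$? No: $123$ followed by one entry gives the pattern $1234$, not $1243$. So the only splits to check are $1\mid243$, $12\mid43$, $1\mid2\mid43$, and $124\mid3$. The last needs $124$ inside a component with all its entries larger than some later entry, which is impossible in a direct sum. So it never occurs.

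\emph{Case analysis.} If there is at most one non-$H$ component $c$ preceded by nothing but a single factor, we split according to where the non-trivial factors sit. If all components after the first are $H$, we obtain Type~A. Otherwise some $c_j$ with $j\ge2$ has length $\ge2$. By condition~2, $j=2$; by condition~3, $c_1$ has length $\le2$, so $c_1\in\{H,UD\}$, or $c_1=UHD$, since $UHD=321$ is connected and allowed. Condition~4 then forces $c_2$ to avoid $132$. Among Types~0--4, the $132$-avoiding connected $4321$-avoiding involutions should be exactly those with path $U^aH^bD^a$, i.e. $\pi$ of the form with fixed points in the middle and shifted blocks. Checking this is the main computational step: for each of Types~1--4 I would exhibit an explicit $132$ pattern. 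It arises from a $U$ or $H$ step followed by a later $U$ that opens a larger transposition. Conversely, for $U^aH^bD^a$, the involution is $(b{+}a{+}1\cdots)$ of the form $[a+b+1,\dots,2a+b,\,a+1,\dots,a+b,\,1,\dots,a]$, which is $321$-block-shaped and clearly avoids $132$. All remaining components, after $c_2$, must be $H$ by condition~2, giving Type~B.

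Conversely, one checks directly that paths of Types~A and~B satisfy conditions~1--4. For condition~3 in Type~B this uses the fact that the length of $m'$ is at most $3$, so the only worry is $m'=UHD$: its entries $3,2,1$ contain no $12$, so the condition is fine there. The main obstacle is the careful verification that the split list is exhaustive, and the $132$-avoidance classification of $c_2$. I would confirm the latter against the Type~0--4 shapes case by case, noting that Type~0 $UH^aD$ is included as $a=1$.
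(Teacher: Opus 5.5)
Your route is the paper's: decompose $\beta(\pi)$ into connected factors; a non-decreasing $c_1$ forces all later factors to be $H$ (Type~A); otherwise $c_1\in\{H,UD,UHD\}$ and what follows must avoid $132$. Your final classification is right, but two intermediate claims are false, and as written the argument does not go through until they are repaired.

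\textbf{First error: condition~3.} The pattern $321$ \emph{is} connected, since its path $UHD$ touches the axis only at the end. So it is not true that every component of length $\ge 3$ contains $12$, and your condition~3 is not a necessary condition for avoidance. For example, $\pi=32154$, with $\beta(\pi)=UHD\,UD$, avoids $1243$, yet it has a component of length $2$ preceded by one of length $3$. This is why your case analysis contradicts itself (``by condition~3, $c_1$ has length $\le 2$ \dots\ or $c_1=UHD$''). The split $12\mid 43$ actually gives the following condition: no component of length $\ge 2$ is preceded by a component containing $12$, i.e.\ by a non-decreasing component. Since the decreasing connected involutions avoiding $4321$ are exactly $1$, $21$ and $321$, this yields precisely $c_1\in\{H,UD,UHD\}$. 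It is also the version you implicitly use in the converse for $m'=UHD$.

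\textbf{Second error: the $132$ check on Types~1--4.} You cannot exhibit a $132$ in every path of Types~1--4. Type~3 with $c=0$ is $U^aH^bD^a$ with $a\ge 2$ (e.g.\ $UUDD$, $\pi=3412$), and it avoids $132$.

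A direct way to identify $c_2$ is as follows. Both $2413$ and $3142$ contain $132$, so by Lemma~\ref{lem:2413} a connected $132$-avoiding factor $c_2$ of length $\ge 2$ has no weak valleys. Hence $c_2=U^aH^bD^a$ with $a\ge 1$. Combined with your check that these paths avoid $132$, this is exactly the middle factor in Type~B.

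The detour through $124\mid 3$ is unnecessary. Since $1243=1\oplus 1\oplus 21$, the only ways an occurrence can spread over several components are $1\mid 243$, $12\mid 43$ and $1\mid 2\mid 43$.

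With these repairs, your proof is a more explicit version of the paper's. The paper instead treats $c_2\cdots c_k$ all at once, using the known Motzkin-path description of $132$-avoiding involutions in place of your conditions~2 and~4.
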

\proof
Suppose that $\beta(\pi)$ has connected components $c_1,c_2,\ldots,c_k.$ 
If $c_1$ corresponds to an involution with an increasing sequence of length at least two, all the other connected components of $\beta(\pi)$ must be $H.$

Otherwise, $c_1$ can only be $H,$ $UD$ or $UHD$ and the path given by $c_2c_3\ldots c_k$  corresponds to an involution avoiding 132. 
\endproof

\begin{thm}\label{GF_1243}
We have $$F_T(x,y)=1+\frac{G_T(x,y)}{1-x}+(x+x^2y+x^3y^2)\cdot \left(\frac{x^2y}{(1-x)(1-x^2)}+\frac{x^3y^2}{(1-x^2)(1-x)^2}\right),$$
where
\begin{align*}
G_T(x,y)=& x+x^2y+\frac{x^3y^2}{1-x}\\
&+ \frac{x^6y^2}{(1-x^2)^2}+\frac{x^7y^3}{(1-x^2)^2(1-x)}\\
&+
\frac{x^7y^3}{(1-x^2)^3}+\frac{x^8y^4}{(1-x^2)^3(1-x)}+\frac{x^7y^3}{(1-x^2)^2}+\frac{x^8y^4}{(1-x^2)^2(1-x)}
\\
&
+\frac{x^4y}{(1-x^2)}+2\frac{x^5y^2}{(1-x^2)(1-x)}+\frac{x^6y^3}{(1-x^2)(1-x)^2}
\\
&
+\frac{x^5y^2}{(1-x^2)^2}+2\frac{x^6y^3}{(1-x^2)^2(1-x)}+\frac{x^7y^4}{(1-x^2)^2(1-x)^2}.
\end{align*}
\end{thm}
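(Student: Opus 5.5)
The plan is to read off $F_T$ from Lemma~\ref{Lemma_1243} and $G_T$ from Theorem~\ref{connected_1243}, using Lemma~\ref{discese_picchi_deboli} to replace descents by weak peaks. Each admissible path then contributes $x^{|m|}y^{\wep(m)}$, and every family of shapes is summed as a product of geometric series.

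\textbf{Computing $G_T$.} I treat the five types of Theorem~\ref{connected_1243} separately, splitting each according to which exponents are zero, because weak peaks depend on which steps are adjacent.

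\emph{Type 0.} $H$ gives $x$. $UD$ gives $x^2y$. $UH^aD$ with $a\ge1$ has the weak peaks $UH$ and $HD$, so it gives $x^3y^2/(1-x)$.

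\emph{Type 3.} $U^aH^bDH^cD^{a-1}$ with $a\ge2$. Write $a=2+a'$; the factor $U^aD^{a-1}$ then contributes $x^3(x^2)^{a'}$, giving a factor $x^3/(1-x^2)$.
\begin{itemize}
\item If $b=0$, the only peak $UD$ gives $y$.
\item If $b>0$, the peaks $UH$ and $HD$ give $y^2$ and a factor $x/(1-x)$.
\item The segment $H^c$ before $D$ creates a weak peak $HD$ when $c>0$, giving a factor $xy/(1-x)$.
\end{itemize}
Combining these yields
\[
\frac{x^4y}{1-x^2}+\frac{x^5y^2}{(1-x^2)(1-x)}+\frac{x^5y^2}{(1-x^2)(1-x)}+\frac{x^6y^3}{(1-x^2)(1-x)^2},
\]
which is the third line of $G_T$.

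\emph{Type 4.} $U^aHU^bH^cDH^dD^{a+b-1}$ with $a,b>0$. The steps $U^aH$ always create the peak $UH$. The pairs $U^aD^a$ and $U^bD^b$, after splitting off one $U$ from each, give $x^2/(1-x^2)$ each. The analysis of $c$ and $d$ is the same as for Type 3, and it produces the last line of $G_T$.

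\emph{Types 1 and 2.} Both involve three independent $U$/$D$ blocks and produce the remaining terms $x^6y^2/(1-x^2)^2,\dots$ in the same way.

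\textbf{Computing $F_T$.}
\begin{itemize}
\item The empty path gives $1$.
\item \emph{Type A}, $mH^t$, gives $G_T/(1-x)$. Appending trailing $H$'s to a path ending in $D$ or $H$ never creates a weak peak, since $DH$ and $HH$ are not peaks.
\item \emph{Type B}, $m'U^aH^bD^aH^c$. Here $m'\in\{H,UD,UHD\}$ contributes $x+x^2y+x^3y^2$. No weak peak forms at the junction, since $m'$ ends in $H$ or $D$ and is followed by $U$. For the second factor, $U^aD^a$ with $a\ge1$ gives $x^2/(1-x^2)$ and $H^c$ gives $1/(1-x)$. If $b=0$ this adds $y$. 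If $b>0$ it adds $y^2\,x/(1-x)$.
\end{itemize}
This reproduces the bracketed factor in the statement.

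\textbf{Main obstacle.} The statement of Lemma~\ref{Lemma_1243} contains overlaps. For example, $HU^aH^bD^aH^c$ with $a=1$ and $b\le 1$ is also of Type~A, because $HUD$ and $HUHD$ are not connected yet contain no increasing pair. I expect the proof to fix this by requiring in Type~B that the path have at least two components before the trailing $H$'s, with the first component being $m'$. I also need to make sure that paths such as $UD\,UD$ are counted only once: they are excluded from Type~A because $m$ must be connected. Verifying this disjointness, together with the bookkeeping of the boundary cases $b=0$, $c=0$, $d=0$ in Types 1, 2 and 4, is where I expect errors. I would check the final series against brute-force counts for $n\le 8$.
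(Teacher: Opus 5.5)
Your route is the paper's own: sum over the shapes of Theorem~\ref{connected_1243} and Lemma~\ref{Lemma_1243}, reading descents as weak peaks. Your computations for Types 0, 3, 4 and for the Type A/Type B assembly of $F_T$ are correct. (Type 3 gives the fourth line of $G_T$, not the third.)

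The gap is the sentence that disposes of Types 1 and 2 ``in the same way'' as ``three independent $U/D$ blocks''. Type 1 has only two blocks: $a\ge2$ and $b\ge1$, with base factor $\frac{x^4}{1-x^2}\cdot\frac{x^2}{1-x^2}$. More seriously, in Type 2 the parameters are not independent. For $a=1$, $b=0$ the word $U^aHU^bDU^cH^dD^{a+b+c-1}$ equals $UHD\,U^cH^dD^c$, which returns to the axis. So the ranges written in Theorem~\ref{connected_1243} must be restricted to $a+b\ge2$. This is exactly why the third line of $G_T$ splits as
\[
\frac{x^7y^3}{(1-x^2)^3}+\frac{x^8y^4}{(1-x^2)^3(1-x)}\quad(b\ge1)\qquad\text{and}\qquad \frac{x^7y^3}{(1-x^2)^2}+\frac{x^8y^4}{(1-x^2)^2(1-x)}\quad(b=0,\ a\ge2).
\]
In both cases the weak peaks are:
\begin{itemize}
\item the peak $UH$ after $U^a$;
\item exactly one peak ($UD$ or $HD$) just before the first $D$;
\item one or two peaks from $U^cH^dD$.
\end{itemize}
Summing the stated ranges independently would give $\frac{x^5y^3}{(1-x^2)^2}+\frac{x^6y^4}{(1-x^2)^2(1-x)}$ for $b=0$. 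That adds the words $UHD\,U^cH^dD^c$. After appending $H^t$ these are precisely the Type B paths with $m'=UHD$, so they would be counted twice, already at length $5$ (e.g.\ $UHDUD$).

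Conversely, the overlap you did worry about does not exist. A Type A path has only $H$ components after its first one, while a Type B path has a second component $U^aH^bD^a$ with $a>0$. So the two types are disjoint as stated, and $H\,UD\,H^c$, $H\,UHD\,H^c$ are not of Type A. Also, $HUD$ encodes $132$, which does contain an increasing pair. No repair of the lemma is needed; the care has to go into the connectivity constraint of Type 2.
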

\proof

In order to keep track of the number of descents of the involution associated with a path, we distinguish between the cases in which the path contains a nonempty sequence of horizontal steps in a given position and those in which such a sequence is absent. Indeed, the presence of a nonempty block of horizontal steps contributes one additional descent to the corresponding involution.

Taking this observation into account, it is immediate to verify that the function $G_T(x,y)$ enumerates the connected paths described in Theorem~\ref{connected_1243}. More precisely, the first line of the expression for $G_T(x,y)$ corresponds to paths of Type~0, the second line to paths of Type~1, and so on.

On the other hand, the generating function $F_T(x,y)$ enumerates all paths corresponding to involutions avoiding both $4321$ and $1243$, as characterized in Lemma~\ref{Lemma_1243}. In particular, paths of Type~A are accounted for by the second summand of $F_T(x,y)$, whereas paths of Type~B are accounted for by the third summand.

This completes the proof.

\endproof

%Q=(x^6y^2)(1+xy/(1-x))/((1-x^2)^2)+(x^7y^3)(1+xy/((1-x)))/((1-x^2)^3)+(x^7y^3)(1+xy/(1-x))/((1-x^2)^2)+(x^4y)(1+2xy/(1-x)+x^2y^2/(1-x)^2)/((1-x^2))+(x^5y^2)(1+2xy/(1-x)+x^2y^2/(1-x)^2)/((1-x^2)^2)+x+x^2y+(x^3y^2)/(1-x)
%p=12
%tot=1+Q/(1-x)+(x+x^2y+x^3y^2)((x^2y)/((1-x)(1-x^2))+(x^3y^2)/((1-x^2)(1-x)^2))
%tA=taylor(tot,(x,0),p)# funziona: Q sono le connesse che evitano 4321 e 1243, contaote per lung (x) e des (y) e tot il totale.

%\begin{figure}
\begin{tikzpicture}[
    line width=1pt,
    every node/.style={font=\small}
]

%------------------------------------------------
% Type 0
%------------------------------------------------
\node[left] at (-1,0) {\textbf{Type 0}};

\draw
(0,0)
 -- node[midway,above=2pt] {$U$} (1,1)
 -- node[midway,above=2pt] {$H^a$} (3,1)
 -- node[midway,above=2pt] {$D$} (4,0);

%------------------------------------------------
% Type 1
%------------------------------------------------
\node[left] at (-1,-3) {\textbf{Type 1}};

\draw
(0,-3)
 -- node[midway,above=2pt] {$U^a$} (1,-2)
 -- node[midway,above=1pt] {$D$} (1.5,-2.5)
 -- node[midway,above=2pt] {$U^b$} (2.5,-1.5)
 -- node[midway,above=2pt] {$H^c$} (4,-1.5)
 -- node[midway,right=1pt] {$D^{a+b-1}$} (5.5,-3);

%------------------------------------------------
% Type 2
%------------------------------------------------
\node[left] at (-1,-7) {\textbf{Type 2}};

\draw
(0,-7)
 -- node[midway,above=2pt] {$U^a$} (1,-6)
 -- node[midway,above=2pt] {$H$} (1.5,-6)
 -- node[midway,above=2pt] {$U^b$} (2.5,-5)
 -- node[midway, above] {$D$} (3,-5.5)
 -- node[midway,above=2pt] {$U^c$} (4.5,-4)
 -- node[midway,above=2pt] {$H^d$} (6,-4)
 -- node[midway,right=1pt] {$D^{a+b+c-1}$} (9,-7);

%------------------------------------------------
% Type 3
%------------------------------------------------
\node[left] at (-1,-11) {\textbf{Type 3}};

\draw
(0,-11)
 -- node[midway,above=2pt] {$U^a$} (3,-8)
 -- node[midway,above=2pt] {$H^b$} (5,-8)
 -- node[midway,right=1pt] {$D$} (5.5,-8.5)
 -- node[midway,above=2pt] {$H^c$} (8,-8.5)
 -- node[midway,right=1pt] {$D^{a-1}$} (10.5,-11);

%------------------------------------------------
% Type 4
%------------------------------------------------
\node[left] at (-1,-15) {\textbf{Type 4}};

\draw
(0,-15)
 -- node[midway,above=2pt] {$U^a$} (2,-13)
 -- node[midway,above=2pt] {$H$} (2.5,-13)
 -- node[midway,above=2pt] {$U^b$} (4.5,-11)
 -- node[midway,above=2pt] {$H^c$} (6.5,-11)
 -- node[midway,right=0.5 pt] {$D$} (7,-11.5)
 -- node[midway,above=2pt] {$H^d$} (8.5,-11.5)
 -- node[midway,right=1pt] {$D^{a+b-1}$} (12,-15);

\end{tikzpicture}
%\caption{Schematic representatives of the five Motzkin-path types. Horizontal segments labeled $H^k$ represent $k$ consecutive horizontal steps, while $U^k$ and $D^k$ indicate runs of up- and down-steps.}
Schematic representatives of the five connected Motzkin-path types corresponding to involutions avoiding 1243 by Theorem \ref{connected_1243}. Horizontal segments labeled $H^k$ represent $k$ consecutive horizontal steps, while $U^k$ and $D^k$ indicate runs of up- and down-steps.
%\end{figure}

% \begin{figure}[h]
% \centering

% \begin{tikzpicture}[scale=0.9]

% %------------------------------------------------
% % M0 decomposition
% %------------------------------------------------
% \node at (0,4.5) {$\mathcal M_0$};

% \draw[thick]
% (0,4) -- (1,5)
% node[midway,left] {$U$}
% -- (5,5)
% node[midway,above] {$m\in \mathcal M_0\cup\mathcal M_L$}
% -- (6,4)
% node[midway,right] {$D$}
% -- (8,4)
% node[midway,below] {$m'\in \mathcal M_0\cup\mathcal M_L$};

% \node at (3,-0.2) {};

% \node at (4,3.2)
% {$\displaystyle UDm'
% \quad\text{or}\quad
% UmDm'$};

% %------------------------------------------------
% % ML decomposition
% %------------------------------------------------
% \node at (0,1.8) {$\mathcal M_L$};

% \draw[thick]
% (0,1.3) -- (1,1.3)
% node[midway,below] {$H$}
% -- (5,1.3)
% node[midway,below] {$m\in \mathcal M_0\cup\mathcal M_L$};

% \node at (4,0.5)
% {$\displaystyle Hm$};

% %------------------------------------------------
% % MLR decomposition
% %------------------------------------------------
% \node at (0,-1) {$\mathcal M_{LR}$};

% \draw[thick]
% (0,-1.5) -- (1,-1.5)
% node[midway,below] {$H$}
% -- (5,-1.5)
% node[midway,below] {$m\in \mathcal M_R\cup\mathcal M_{LR}$}
% -- (6,-1.5)
% node[midway,below] {$H$};

% \node at (4,-2.3)
% {$\displaystyle HmH$};

% \end{tikzpicture}

% \caption{First return decomposition for Motzkin paths associated with involutions avoiding $\{4321,3421,4312\}$.}
% \label{fig:first-return-3421}
% \end{figure}

\subsection{$I_n(4321,1342,1423)$}\label{1342}

\begin{thm}\label{connected_1423}
Let $\pi$ be a connected involution in $I_n(4321)$ and let
$m=\beta(\pi)$.

Then $\pi$ avoids $1423$ if and only if $m$ avoids all the subsequences 
$DU,$ $DHH,$ $HUU,$ $HUHH,$ $DHDD.$
This is the case if and only if $m$
is of one of the following forms.

\begin{enumerate}
    \item $H,$ 
    \item $U^aH^bUD^{a+1}$ with $a,b>0,$

    \item $U^aD^a$ with $a>0,$

    \item $U^aH^bUHD^{a+1}$ with $a,b>0,$
    \item $U^aH^bD^a$ with $a,b>0$
    \item $U^{a+1}D^{a}HD$ with $a>0,$
    \item $U^aH^bUD^aHD$ with $a,b>0,$
    \item $U^{a+1}H^bD^aHD$ with $a,b>0.$
\end{enumerate}
\end{thm}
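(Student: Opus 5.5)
The statement splits into two independent claims: a pattern-to-word translation (avoiding $1423$ is the same as $m$ avoiding $DU$, $DHH$, $HUU$, $HUHH$, $DHDD$), and a purely word-combinatorial description of the connected Motzkin paths avoiding these five words. By Corollary~\ref{cor:add-inverses} and $1342=1423^{-1}$, it suffices to deal with $1423$. Throughout I will use the description of $\pi=\beta^{-1}(m)$ coming from Proposition~\ref{prop:unitary-labels} and Proposition~\ref{prop:increasing-runs}:
\begin{itemize}
\item all labels equal $1$, so the $k$-th $U$ step is matched with the $k$-th $D$ step (first-in, first-out);
\item an entry at a $U$ position $i$ is its partner $\pi(i)>i$, an entry at a $D$ position $j$ is its partner $\pi(j)<j$, and $H$ positions are fixed points;
\item the $U$-entries, the $D$-entries and the $H$-entries each form an increasing subsequence.
\end{itemize}

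\emph{Forbidden words force $1423$.} For each of the five words I will exhibit an explicit occurrence of $1423$ built from the steps of the word occurrence together with suitably chosen partners. Connectivity supplies the extra steps that are needed. For instance, a $DU$ with $D$ at $j$ and $U$ at $k>j$ cannot return to the axis at $j$, so some transposition $(a,b)$ with $a<j<b$ is active. Combining the entries at positions among $a$, $j$, $k$, $\pi(k)$, together with the initial $U$ of the path, gives a $1423$. The other words are treated the same way:
\begin{itemize}
\item $HUU$: the fixed point plays the role of ``1'' and the two later openings supply ``4'', ``2'', ``3'' via their partners;
\item $DHH$: the small value at the $D$, a large value from an active opening, and the two fixed points give $1423$;
\item $HUHH$ and $DHDD$: the same mechanism.
\end{itemize}
Each case is a short check of relative order.

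\emph{$1423$ forces a forbidden word.} Take an occurrence $\pi(p_1)<\pi(p_3)<\pi(p_4)<\pi(p_2)$ with $p_1<p_2<p_3<p_4$. By Remark~\ref{oss:decreasing}, the pairs $(p_2,p_3)$ and $(p_2,p_4)$ are each of different step kinds. I will run through the finitely many assignments of step kinds $U$, $D$, $H$ to $p_1,\dots,p_4$, adding the partner positions $\pi(p_i)$ when useful (they lie to the right for $U$ and to the left for $D$). In each case I expect to read off, in left-to-right order, one of the words $DU$, $DHH$, $HUU$, $HUHH$, $DHDD$. Many cases already contain $DU$ and are immediate, so the real work is confined to the paths without $DU$, which makes the analysis manageable.

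\emph{Shapes.} For a connected path avoiding $DU$, all $U$'s precede all $D$'s, so $m=wD^{\,r}$-type: a prefix $w\in\{U,H\}^*$ followed by a suffix in $\{D,H\}^*$ that starts with $D$. The remaining conditions then pin down the shape:
\begin{itemize}
\item $HUU$ allows at most one $U$ after the first $H$;
\item $HUHH$ restricts the $H$'s after such a $U$ to at most one;
\item $DHH$ forces at most one $H$ after the first $D$, and $DHDD$ forces that $H$ to be followed by at most one $D$.
\end{itemize}
Connectivity forbids an initial $H$ unless $m=H$. Enumerating the surviving combinations (whether there is a middle $H$-block, a lone $U$ after it, an $H$ before the last $D$, or an $H$ right before the final descent) should give exactly the eight listed forms. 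Conversely, each listed form visibly avoids all five words.

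The main obstacle is the converse case analysis in the second step: bounding the cases cleanly and guaranteeing that every configuration without $DU$ yields one of the remaining four words. If the direct enumeration becomes unwieldy, I would instead check directly that the involutions of the eight explicit shapes avoid $1423$, since their one-line notations are explicit. Together with the first step, this also proves the equivalence.
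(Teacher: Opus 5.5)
Your overall plan matches the paper's proof, which also has three steps. First, a connected path containing one of the five words gives a $1423$. Second, the connected paths avoiding the five words are exactly the eight shapes; your reduction through $DU$-freeness does this correctly. Third, the eight shapes avoid $1423$, checked directly on their explicit one-line notations. Your ``fallback'' is exactly the paper's third step, so you can drop the case analysis ``$1423\Rightarrow$ forbidden word''. The problem is in the first step: two of the constructions you sketch do not work as stated.

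\textbf{The $DU$ case.} An arbitrary occurrence does not work, because the transposition $(a\ b)$ active at $j$ may close before $k$. Take $m=UUDUDUDD$, so $\pi=35172846$, with $j=3$ and $k=6$. The only transposition active after step $3$ is $(2\ 5)$. The entries $3,5,1,8,6$ at positions $1,a,j,k,\pi(k)$ contain neither $1423$ nor $1342$. The fix is to choose $k$ as the first $U$ after $j$. Connectivity then forces some $(a\ b)$ active after step $j$ with $b>k$. The positions $j<k<b<\pi(k)$ then carry $\pi(j)<a<k<\pi(k)$, which is a $1423$.

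\textbf{The $DHH$ case.} Suppose the ``1'' is the entry $\pi(j)$ at the $D$. Then the ``4'' must lie between $j$ and the first fixed point and exceed both fixed points. Only an opening can do that, so there must be a $U$ after the $D$, i.e.\ a $DU$. For example, $UUDHHD$ gives $\pi=361452$, and no $1423$ starts at the entry $1$.

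\textbf{Suggested repair.} Handle $DU$ first, then work in $DU$-free paths. There all openings precede all closings, and the last opening $p$ is matched with the final step $n$. The remaining words then give explicit occurrences:
\begin{itemize}
\item $DHH$ at $j<f_1<f_2$ gives the values $j,n,f_1,f_2$ at positions $\pi(j)<p<f_1<f_2$.
\item $HUU$ at $f<k_1<k_2$ gives $f,\pi(k_2),k_1,k_2$ at positions $f<k_2<\pi(k_1)<\pi(k_2)$.
\item $DHDD$ at $j<f<j_2<j_3$ gives $\pi(j),f,\pi(j_2),\pi(j_3)$ at positions $j<f<j_2<j_3$. This works because every opening precedes $j$, so $\pi(j_3)<f$.
\item $HUHH$ at $f_1<k<f_2<f_3$ splits into two cases. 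If $p<f_2$, you get $f_1,n,f_2,f_3$ at positions $f_1<p<f_2<f_3$. If $p>f_2$, you already have an $HUU$.
\end{itemize}
The $HUHH$ case does need this argument, not ``the same mechanism'' applied to an arbitrary occurrence. In $UHUUDDHHD$, the $U$ in position $3$ is closed at position $6$, before the last two $H$'s.
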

\proof
First of all notice that if the connected path $m=\beta(\pi)$ contains one of the following patterns $DU,$ $DHH,$ $HUU,$ $HUHH,$ $DHDD$ then $\pi$ contains $1423.$

Moreover, the connected Motzkin paths avoiding all of these subpatterns are precisely those of the eight forms listed in the statement. 

It is also immediate to realize that each path of these forms corresponds to a connected involution avoiding $1423.$
\endproof

The characterization of general paths corresponding to 1423-avoiding involutions is now trivial.
\begin{thm}\label{1423_notc}
Let $\pi$ be an involution in $I_n(4321,1423).$ Then $\pi$ is not connected if and only if $\beta(\pi)$ is of the form $mm'$ where $m$ is a connected path of one of the eight forms listed in the previous theorem and $m'$ is obtained by concatenating connected components given by the factors  $H,$ $UD,$ $UHD.$
\end{thm}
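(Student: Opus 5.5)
The plan is to reduce everything to Remark~\ref{conn_patt} and Theorem~\ref{connected_1423}. The only new ingredient is that $1423$ is \emph{not} connected: $1423=1\oplus 312$, and $312$ is connected. Throughout, we use Corollary~\ref{cor:add-inverses}, so avoiding $1423$ is the same as avoiding $\{1342,1423\}$.

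First I will establish how $1423$-avoidance interacts with the decomposition into connected components. Let $\pi\in I_n(4321)$ have connected components $c_1,\dots,c_k$, with $k\ge 2$ since $\pi$ is not connected. These correspond, as recalled after Lemma~\ref{discese_picchi_deboli}, to the connected factors of $\beta(\pi)$.

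Take an occurrence of $1423$ in $\pi$. Its last three entries form a $312$, which is connected, so by Remark~\ref{conn_patt} they lie in a single component $c_j$. The entry playing the role of $1$ lies either in $c_j$ or in an earlier component. Conversely, every component $c_j$ with $j\ge 2$ has all its values above those of $c_1$. So any $312$ inside $c_j$, preceded by any entry of $c_1$, yields a $1423$.

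Hence $\pi$ avoids $1423$ if and only if $c_1$ avoids $1423$ and every $c_j$ with $j\ge 2$ avoids $312$. Note that a connected involution avoiding $312$ automatically avoids $1423$.

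By Theorem~\ref{connected_1423}, the condition on $c_1$ says exactly that its path $m$ is one of the eight listed forms. It remains to show that a connected $\sigma\in I(4321)$ avoids $312$ if and only if $\beta(\sigma)\in\{H,\,UD,\,UHD\}$.

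\begin{itemize}
\item \emph{These three avoid $312$.} They correspond to $1$, $21$ and $321$, none of which contains $312$.
\item \emph{Paths containing $UU$ contain $312$.} Suppose $\beta(\sigma)$ contains two up steps at positions $i<j$. By Proposition~\ref{prop:increasing-runs}, $i<j<\sigma(i)<\sigma(j)$. The entries at positions $i,j,\sigma(i),\sigma(j)$ are then $\sigma(i),\sigma(j),i,j$, a $3412$, which contains $312$.
\item \emph{Paths without $UU$ of length at least $4$.} If $\beta(\sigma)$ has no two up steps and is connected, it must be $UH^tD$. If $t\ge 2$, then $\sigma=n\,2\,3\cdots(n-1)\,1$, whose entries $n,2,3$ form a $312$.
\end{itemize}

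Combining these cases gives the characterization of the components $c_j$, $j\ge 2$. Since $k\ge2$, the word $m'$ is nonempty. The converse direction of the statement follows from the same equivalence: any path $mm'$ of the described shape has $k\ge 2$ components, so the corresponding involution is not connected and avoids both patterns.

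I expect the only delicate step to be the $312$ characterization. There, the $UU$ case must use Proposition~\ref{prop:increasing-runs} correctly to place the entries; everything else is immediate.
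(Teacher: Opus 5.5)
The reduction is correct and is what the paper has in mind; the paper gives no proof and calls the result trivial after Theorem~\ref{connected_1423}. Since $1423=1\oplus 312$ with $312$ connected, $\pi$ avoids $1423$ iff its first component avoids $1423$ and every later component avoids $312$. This part of your argument, and your converse, are fine.

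The problem is the second bullet of your $312$ lemma. Proposition~\ref{prop:increasing-runs} only gives $\sigma(i)<\sigma(j)$ for up steps $i<j$. It says nothing about $j<\sigma(i)$, and that inequality fails for an arbitrary pair. For example, $\beta(\sigma)=UUDUDD$ is connected with $\sigma=351624$, and the up steps $i=1$, $j=4$ have $\sigma(1)=3<4$. The entries in positions $i<\sigma(i)<j<\sigma(j)$ are then $3,1,6,4$, a $2143$, which contains no $312$. If you instead meant a \emph{factor} $UU$, the inequality does hold, but then your third bullet, which assumes no two up steps at all, misses connected paths such as $UHUDD$.

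The repair must use connectedness, which your bullet never invokes. Take $i=1$ and let $j$ be the second up step. All labels are $1$, so the first $D$ closes the transposition opened at position $1$. If that $D$ preceded $j$, the path would begin with $UH^tD$ and return to the axis; by connectedness it would then end there and have only one up step, a contradiction. Hence $1<j<\sigma(1)<\sigma(j)$, and the entries $\sigma(1),1,j$ in positions $1<\sigma(1)<\sigma(j)$ form a $312$.

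Alternatively, you can skip the path analysis. An involution avoiding $312$ also avoids $312^{-1}=231$, so it is layered (a direct sum of decreasing blocks). A connected layered permutation is a single decreasing block, and $4321$-avoidance leaves only $1$, $21$, $321$, i.e.\ $H$, $UD$, $UHD$.
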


As a consequence of the previous results we have $$F_T(x,y)=G_T(x,y)\cdot\left(\frac{1}{1-x-x^2y-x^3y^2}\right)+1$$
where
\begin{align*}
G(x,y)=&  x+\frac{x^2y-x^3y+x^3y^2+2x^5y^2-x^6y^2+3x^6y^3}{(1-x)(1-x^2)}=\\ & x+\frac{x^5 y^2}{(1-x^2)(1-x)}+\frac{(x^2 y)}{(1-x^2)}+\frac{x^3 y^2}{(1-x^2)(1-x)}\\ & +\frac{x^5 y^2}{(1-x^2)}+3 \frac{x^6 y^3}{(1-x^2)(1-x)}.
\end{align*}
is the generating function of the connected paths listed in Theorem \ref{connected_1423}.

\subsection{$I_n(4321,1432)$}\label{1432}

\begin{lem}
Let $\sigma$ be an involution and let $\sigma=\sigma_1\ldots \sigma_k$ be its decomposition in connected components. Then $\sigma\in I(4321,1432)$   if and only if $\sigma_1$ avoids 1432 and 4321 and the involution $\sigma_2\ldots\sigma_k$ avoids 321.
\end{lem}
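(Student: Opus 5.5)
We prove the two implications separately, using the fact that the connected components $\sigma_1,\dots,\sigma_k$ occupy consecutive blocks of positions, with the values of each block exceeding those of all earlier blocks. In other words, $\sigma=\sigma_1\oplus\sigma_2\oplus\cdots\oplus\sigma_k$ is a direct sum.

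\emph{Necessity.} Suppose $\sigma\in I(4321,1432)$. Then $\sigma_1$, being a subsequence of $\sigma$, avoids both $4321$ and $1432$. Now suppose $\sigma_2\ldots\sigma_k$ contained an occurrence of $321$. Choose any entry $a$ of $\sigma_1$, which exists since components are non-empty. Then $a$ lies to the left of, and below, every entry of the later components. Prepending $a$ to the occurrence of $321$ gives an occurrence of $1432$, a contradiction. Hence $\sigma_2\ldots\sigma_k$ avoids $321$.

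\emph{Sufficiency.} Suppose $\sigma_1$ avoids $4321$ and $1432$, and $\sigma':=\sigma_2\ldots\sigma_k$ avoids $321$.

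First, $\sigma$ avoids $4321$. Since $4321$ is connected, Remark~\ref{conn_patt} places any occurrence inside a single component. It cannot lie in $\sigma_1$. It cannot lie in a later component either, since that would give a $321$ inside $\sigma'$.

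Second, $\sigma$ avoids $1432$. Take an occurrence $p_1p_2p_3p_4$ of $1432$ and split according to how many of its entries lie in $\sigma_1$, which must form an initial segment of the occurrence.
\begin{itemize}
\item If no entries lie in $\sigma_1$, the entries $p_2p_3p_4$ form a $321$ in $\sigma'$, which is impossible.
\item If all four entries lie in $\sigma_1$, this contradicts the hypothesis on $\sigma_1$.
\item If the first $j$ entries lie in $\sigma_1$ with $1\le j\le 3$, the direct-sum structure forces every entry of $\sigma_1$ to be smaller than every later entry. This would require each of $p_1,\dots,p_j$ to be smaller than each of $p_{j+1},\dots,p_4$. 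For $j=1$ this does hold, since $1$ is smaller than $4,3,2$, and then $p_2p_3p_4$ is a $321$ in $\sigma'$, a contradiction. For $j=2$ and $j=3$ it fails, because $4>3$ and $3>2$ respectively.
\end{itemize}
Equivalently, $1432=1\oplus 321$ has only the trivial sum decompositions, so any occurrence splits across the direct sum only as $1\mid 321$.

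There is no real obstacle here. The only point requiring care is this case analysis of how an occurrence of $1432$ can straddle the boundary between $\sigma_1$ and $\sigma'$. Involutivity plays no role beyond guaranteeing that each component is itself an involution.
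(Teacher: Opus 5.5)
Your proof is correct and complete. The necessity direction (prepend any entry of $\sigma_1$ to a $321$ in the later components) and the sufficiency case analysis ($4321$ is connected, so by Remark~\ref{conn_patt} it lies in a single component; an occurrence of $1432=1\oplus 321$ can straddle the boundary between $\sigma_1$ and $\sigma_2\ldots\sigma_k$ only as $1\mid 321$) are exactly what is needed.

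The paper states this lemma without proof. It implicitly relies on the same direct-sum reasoning it uses in Remark~\ref{conn_patt} and in the proof of Lemma~\ref{Lemma_1243}, so your argument supplies the omitted details. Your remark that involutivity plays no role is also right.
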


\begin{thm}
 Let $\sigma\in I(4321)$ be a connected involution.  Then $\sigma$ avoids 1432 if and only if $\beta(\sigma)$ avoids the subsequences HUH and DUH.
\end{thm}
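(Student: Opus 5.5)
The plan is to prove the two implications separately, working directly with Biane's bijection and Remark~\ref{oss:decreasing}.

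\emph{If $\beta(\sigma)$ contains $HUH$ or $DUH$, then $\sigma$ contains $1432$.} Take an occurrence at positions $i<j<k$: step $i$ is $H$ or $D$, step $j$ is $U$, step $k$ is $H$. The point of the first letter is that $\sigma(i)\le i$ in both cases: $\sigma(i)=i$ for an $H$, and $\sigma(i)<i$ for a $D$. So it suffices to find three positions after $i$ whose values are larger than $i$ and form a decreasing sequence. Split according to where $\sigma(j)>j$ lies.
\begin{itemize}
\item If $\sigma(j)>k$, use positions $i<j<k<\sigma(j)$. Their values are $\sigma(i),\sigma(j),k,j$, with $\sigma(i)\le i<j<k<\sigma(j)$, so this is an occurrence of $1432$.
\item If $\sigma(j)<k$, use connectedness. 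By the remark after Lemma~\ref{discese_picchi_deboli}, $\beta(\sigma)$ is connected, so the path has positive height at the $H$ step $k$. Hence there is a transposition $(a\,b)$ with $a<k<b$.
\begin{itemize}
\item Suppose $a<j$. Then $a<j<\sigma(j)<b$, and the values at $a,j,\sigma(j),b$ are $b,\sigma(j),j,a$. This is a $4321$, which is impossible.
\item So $a>j>i$. The positions $i<a<k<b$ carry the values $\sigma(i),b,k,a$, with $\sigma(i)\le i<a<k<b$. This is an occurrence of $1432$.
\end{itemize}
\end{itemize}
This direction is short and I expect no difficulty.

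\emph{If $\sigma$ contains $1432$, then $\beta(\sigma)$ contains $HUH$ or $DUH$.} Take an occurrence at positions $p_1<p_2<p_3<p_4$, with values $v_1<v_4<v_3<v_2$. By Remark~\ref{oss:decreasing}, the steps at $p_2,p_3,p_4$ are one $U$, one $D$ and one $H$, in some order. This leaves six orders to examine.

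In each order I would try to produce an $H$ or $D$ step, followed by a $U$ step, followed by an $H$ step. The tools are:
\begin{itemize}
\item the partner positions $\sigma(p_t)$, which are again positions of $U$ or $D$ steps;
\item the constraint that $U$ values increase, $D$ values increase, and fixed points are fixed (Proposition~\ref{prop:increasing-runs});
\item the small entry $v_1$ at $p_1$, which is used to produce the initial $H$ or $D$.
\end{itemize}

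For the entry at $p_1$: if step $p_1$ is $U$, then $\sigma(p_1)=v_1$ is a position greater than $p_1$. I would compare it with $p_2,p_3,p_4$. Either $\sigma(p_1)$ itself is a $D$ step lying before the required $U$, or the smallness of $v_1$ together with Proposition~\ref{prop:increasing-runs} forces the $U$ of the triple to open a transposition whose value is at most $v_1$. The latter contradicts $v_1<v_4$.

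For the trailing $H$:
\begin{itemize}
\item When the $H$ of the triple comes after its $U$, the $H$ is already in place.
\item Otherwise, say the order is $H,U,D$ or $U,D,H$ shuffled, I would replace the triple by the partner $D$ step of the $U$, or argue that some earlier $D$ step precedes the $U$. For example, $v_4<v_3$ with the $D$ at $p_4$ forces its partner position $v_4$ to be a $U$ step before the $U$ at $p_3$ or $p_2$.
\end{itemize}

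The main obstacle is this converse case analysis. Concretely, the difficulty is the orders in which the $H$ of the triple precedes the $U$, since then the required trailing $H$ is not directly present. There I would first try to reuse connectedness together with the noncrossing-type constraint used above, namely that a $U$ before another $U$ must close first. If that fails, I would fall back on a minimal-counterexample choice of the $1432$ occurrence, taking $p_4$ as small as possible.
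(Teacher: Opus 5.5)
Your proof of the first implication is correct and is essentially the paper's. The paper takes a $U$ step before the second $H$ whose partner $D$ lies beyond that $H$, using connectedness. You reach the same kind of witness by splitting on whether $\sigma(j)>k$ and ruling out $a<j$ through $4321$. The converse, however, is only a plan. It is organised around a case structure that does not exist and a dichotomy that does not hold.

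First, there are not six orders to examine. A step $H$ or $D$ at position $p$ carries a value $\le p$, whereas a step $H$ or $U$ at a later position $p'$ carries a value $\ge p'>p$. So the only decreasing pairs of steps of different kinds are $UH$, $UD$ and $HD$, and the steps at $p_2,p_3,p_4$ are forced to be $U,H,D$ in this order. The orders you single out as the main obstacle ($H$ before $U$) never occur.

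What remains is the case where $p_1$ is a $U$ step, and there your alternative fails. Take $\sigma=47318625$, with $\beta(\sigma)=UUHDUHDD$, and the occurrence $4,7,6,5$ at positions $1,2,6,8$. Here $\sigma(p_1)=4$ lies after the $U$ at $p_2=2$, and that $U$ opens a transposition of value $7>v_1$. So your second branch gives no contradiction, while your first branch needs a $U$ step between position $\sigma(p_1)$ and some $H$, which you never produce.

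The missing idea is to use the partner of the $D$ at $p_4$:
\begin{itemize}
\item position $v_1$ is a $D$ step, since it closes the transposition opened at $p_1$;
\item position $v_4$ is a $U$ step, since it opens the transposition closed at $p_4$;
\item $v_1<v_4<v_3=p_3$, and $p_3$ is an $H$ step.
\end{itemize}
Hence positions $v_1<v_4<p_3$ give an occurrence of $DUH$ (positions $4,5,6$ in the example). If instead $p_1$ is an $H$ or $D$ step, positions $p_1<p_2<p_3$ give $HUH$ or $DUH$ directly. This finishes the converse without connectedness or any minimal-counterexample argument. It is exactly the paper's reduction to $UUHD$, $HUHD$, $DUHD$, with the $UUHD$ case made explicit.
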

\proof
Suppose that $\beta(\sigma)$ contains $H_1UH_2$. Let $\hat U$ be the leftmost U step preceding $H_2$ and $\hat D$ the D step associated with $\hat U.$ Since the Motzkin path $\beta(\sigma)$ is connected, $\hat D$ must lie to the right of $H_2.$ Thus the sequence of steps $H_1,\hat U,H_2,\hat D$ corresponds in $\sigma$ to the pattern 1432. If the path    $\beta(\sigma)$ contains $DUH$ it is possible to prove similarly that $\sigma$ contains the pattern 1432. 

Conversely, suppose that $\sigma$ contains the subsequence $adcb$ order isomorphic to $1432.$ The steps corresponding to those integers can be either $UUHD,$ or $HUHD,$ or $DUHD.$ 

In the first case, it is easily checked that the path contains either $HUHD,$ or $DUHD$ elsewhere. 

This concludes the proof. 
\endproof

We want to determine the generating function $G_T(x,y)$ of the connected involutions in $I(4321,1432)$ counted by length and number of descents. By the previous Theorem we have $$G_T(x,y)=\sum_{d\in \mathcal M''}x^{|d|}y^{\operatorname{wp}(d)},$$
where $\mathcal M''$ is the set of connected  Motzkin paths avoiding $DUH$ and $HUH.$

First of all notice that each connected Motzkin path avoiding DUH and HUH is of the form $UmD$ where $m$ is any path avoiding DUH and HUH. 

We will analyze separately the cases in which the Motzkin paths start or end with a horizontal step (or both), since elevating these paths gives rise to additional weak peaks.

Each Motzkin path avoiding DUH and HUH is of one of the following types:
\begin{itemize}
    \item the empty path,
    \item $H^tc,$ where $t\geq 0$ and $c$ is a Dyck path,
    \item $Um_1Dc,$ where $m_1$ is itself a Motzkin path avoiding DUH and HUH  and $c$ is a Dyck path,
    \item $UbDH^tc,$ where $b$ is a Motzkin path avoiding $DU$ and $HU$, $t\geq 0$ and $c$ is a Dyck path.
\end{itemize}

Let $A_i(x,y)$ ($A_f(x,y),$ $A_{if}(x,y),$ and $A_0(x,y),$ respectively)  be the generating function of the paths avoiding DUH and HUH that start but do not end (end but not start, start and end, do not start nor end, respectively) with an H step (notice that the single step $H$ is counted by $A_{if}$ and the empty path is counted by $A_0$ only).

Define $B_s(x,y)$ for $s\in\{i,f,if,0\}$ in the same way for the family of paths avoiding DU and HU.

First of all observe that a Motzkin path $m$ avoiding DU and HU is either empty, or ends with $H,$ or is of the form UmD, where m itself avoids DU and HU. 
As a consequence we have,
\begin{align*}
   A_i(x,y)=& x (\operatorname{Nar}(x^2,y)-1), \\
   A_f(x,y)=&\frac{x^3(yB_f(x,y)+y^2B_{if}(x,y)+B_0(x,y)-1+y)}{1-x},\\
   A_{if}(x,y)=&\frac{x}{1-x},\\
   A_0(x,y)=&1+A_f(x,y)(\operatorname{Nar}(x^2,y)-1)+\\ &x^2(A_0(x,y)+yA_i(x,y)+yA_f(x,y)+y^2A_{if}(x,y)-1+y)\operatorname{Nar}(x^2,y),\\
   B_i(x,y)=& 0,\\
   B_f(x,y)=&\frac{x(B_0(x,y)-1)}{1-x}\\
   B_{if}(x,y)=&\frac{x}{1-x},\\
   B_0(x,y)=& 1+x^2(B_0(x,y)-1+y+yB_f(x,y)+y^2B_{if}(x,y)),\\   
\end{align*}

where $$\operatorname{Nar}(x,y)=1+xy+(y+y^2)x^2\ldots$$

$$ 
=\frac{
1 +x- xy - \sqrt{\left(1-x(1+y)\right)^2-4yx^2}
}{
2x
}
$$

is the Narayana generating function that counts Dyck paths according to semilength ($x$) and number of peaks ($y$).

Now, the generating function for the connected involutions in $T,$ can be written as 
\begin{align*}    G_T(x,y)=&x+x^2(A_0(x,y)+yA_i(x,y)+yA_f(x,y)+y^2A_{if}(x,y)-1+y).
\end{align*}

Thanks to the previous system of equations, we get 

$$
\begin{aligned}
G_T(x,y)
={}&
\frac{
N x^6-(2N+1)x^4+(N+2)x^2-1
}{
N x^5-N x^4-(N+1)x^3+(N+1)x^2-(N x^5-x^3)y+x-1
}
\\[1ex]
&+
\frac{
(x^5-x^3)y^2
-\left(Nx^6+(N+1)x^5+(N-3)x^3-2x^4+x^2\right)y
}{
N x^5-N x^4-(N+1)x^3+(N+1)x^2-(N x^5-x^3)y+x-1
}-1
\end{aligned}
$$

with 

$$N=\operatorname{Nar}(x^2,y).$$

Thus,
$$F(x,y)=1+G_T(x,y)\Lambda(x,y),$$

where

$$\Lambda(x,y)=\frac{1}{1-x-x^2(N-1+y)}.$$

\subsection{$I_n(4321,2143)$}\label{2143}

It is obvious that an involution $\pi$ avoiding the non-connected pattern 2143 can have at most one connected component distinct from $H.$

The procedure \texttt{guess[T]}  suggests to try to prove that the connected paths corresponding to the involutions avoiding such patterns are $\mathcal W^{20}_4(T)=\{DU,HUDH\}.$ It is in fact the case, as proved in the following theorem, where we consider directly the whole set of paths, not only the connected one.  

\begin{thm}
An involution $\pi\in I_n(4321)$ avoids the pattern $2143$ if and only if $\beta(\pi)$ avoids $DU$ and $UHUDHD.$ This is the case if and only if $\beta(\pi)$
is of the form $H^{a_1}U^{b_1}\ldots H^{a_k}U^{b_k}H^{a_{k+1}}D^tH^s$ or of the form $H^sU^tH^{a_1}D^{b_1}\ldots H^{a_k}D^{b_k}H^{a_{k+1}}$ for some $a_i,b_i,s,t\in \mathbb N$ with $t=b_1+b_2+\ldots +b_k.$
\end{thm}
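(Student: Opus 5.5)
The plan is to prove the two equivalences separately: first that $\pi$ avoids $2143$ exactly when $\beta(\pi)$ avoids $DU$ and $UHUDHD$, and then that these paths are exactly the two listed families. Throughout I use Proposition~\ref{prop:unitary-labels}: every label is $1$, so the $i$-th $U$ step is matched with the $i$-th $D$ step (first-in, first-out).

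\textbf{Forbidden subsequences force $2143$.} Suppose $\beta(\pi)$ contains $DU$, with a $D$ at position $i$ closing $(j,i)$ and a later $U$ at position $k$ opening $(k,l)$. Then $j<i<k<l$, and the entries at these four positions are $i,j,l,k$, which is an occurrence of $2143$.

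Now suppose $\beta(\pi)$ avoids $DU$ but contains $\hat U\hat H_1\hat U\hat D\hat H_2\hat D$, with the two fixed points at positions $h_1<h_2$. Avoiding $DU$ means every $U$ precedes every $D$. Let $f$ be the position of the first $D$ step. It is matched with the first $U$, at some position $u_f<h_1$. Also $h_1<f<h_2$, because the first $D$ can be no later than the $\hat D$ lying between $\hat H_1$ and $\hat H_2$. Let $\ell$ be the position of the last $D$. It lies after $h_2$ and is matched with the last $U$, at some position $u_\ell$ with $h_1<u_\ell<f<h_2$, since the second $\hat U$ comes after $\hat H_1$. The entries at positions $h_1<f<h_2<\ell$ are $h_1,u_f,h_2,u_\ell$. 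Since $u_f<h_1<u_\ell<h_2$, this is an occurrence of $2143$.

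\textbf{$2143$ forces a forbidden subsequence.} Assume $\pi$ contains $2143$ at positions $p_1<p_2<p_3<p_4$ and $\beta(\pi)$ avoids $DU$, so every $U$ step precedes every $D$ step. By Remark~\ref{oss:decreasing}, the steps at $p_1,p_2$ have different kinds, and so do those at $p_3,p_4$. I then split into cases by the ordered kinds of the pair $(p_1,p_2)$ and the pair $(p_3,p_4)$, which must be among $UH$, $UD$, $HD$; $HU$ and $DH$ are excluded because Proposition~\ref{prop:increasing-runs} forbids the corresponding descents.

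\begin{itemize}
\item If both pairs have a $D$ step and there is an $H$ step between the two $D$'s, the first-in, first-out matching recovers the fixed points and $U$ steps needed to exhibit $UHUDHD$.
\item If some pair contains no $H$, I use the value inequalities $\pi(p_2)<\pi(p_1)<\pi(p_4)<\pi(p_3)$, together with the fact that $U$ entries, $D$ entries and $H$ entries each increase, to reach a contradiction or to locate an $H$ strictly between two $U$'s and another $H$ strictly between two $D$'s.
\end{itemize}

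This case analysis is the step I expect to be the main obstacle. The delicate point is translating the value inequalities into the relative positions of steps via the matching. I would organize it by noting that a $UD$ pair with the $D$ closing the $U$'s own partner gives $\pi(p_1)=p_2$, and handle that situation first.

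\textbf{Description of the shapes.} A path avoiding $DU$ is a word in $\{U,H\}$ followed by a word in $\{D,H\}$. Given this, it avoids $UHUDHD$ exactly when there is either no $H$ strictly between two $D$'s, or no $H$ strictly between two $U$'s. In the first case the path is $H^{a_1}U^{b_1}\cdots H^{a_k}U^{b_k}H^{a_{k+1}}D^tH^s$; in the second it is the mirror form $H^sU^tH^{a_1}D^{b_1}\cdots H^{a_k}D^{b_k}H^{a_{k+1}}$. In both forms $t=\sum b_i$, because the path returns to height $0$.
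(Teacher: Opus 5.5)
Your derivation of $2143$ from $DU$, or from $UHUDHD$ when $DU$ is absent, is correct and follows the paper's argument. The paper uses the same first and last $U$ and $D$ steps but exhibits $426153$; your direct extraction of $h_1,u_f,h_2,u_\ell$ is cleaner. Your description of the two shapes is also right.

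The gap is the converse, which you leave as a plan, and the plan as written does not cover all cases. Once $DU$ is excluded, the possible kinds at $p_1<p_2<p_3<p_4$ are $UHUH$, $UHUD$, $UHHD$, $UDHD$ and $HDHD$. Two of these fall under neither of your bullets: $UHUH$ (both pairs contain $H$, neither contains $D$) and $UHHD$. The case $UHUH$ genuinely occurs: in $426153=\beta^{-1}(UHUDHD)$, positions $1,2,3,5$ carry $4,2,6,5$. Moreover, the within-kind monotonicity you invoke is not the fact that decides the cases. (A minor point: $HU$ and $DH$ pairs are never inversions simply because an $H$ entry equals its position, a $U$ entry exceeds it and a $D$ entry is smaller. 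Proposition~\ref{prop:increasing-runs} only handles pairs of equal kind.)

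The missing observation is a cross-kind inequality. Suppose $\beta(\pi)$ avoids $DU$, with $U$ steps at $u_1<\dots<u_t$ and $D$ steps at $d_1<\dots<d_t$. Then $u_t<d_1$ and $\pi(u_i)=d_i$. So every entry at a $U$ step exceeds every entry at a $D$ step:
$\pi(u_i)=d_i>u_t\ge u_k=\pi(d_k)$.

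Consequently $p_1$ of kind $U$ together with $p_4$ of kind $D$ would contradict $\pi(p_1)<\pi(p_4)$. This leaves two cases.
\begin{itemize}
\item If $p_1$ is a $U$ step, then $p_4$ is $H$. The kinds are then $UHUH$, since $UDUH$ contains $DU$.
\item If $p_1$ is an $H$ step, then $p_2$ is $D$. No $U$ can follow, so the kinds are $HDHD$.
\end{itemize}

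For $UHUH$, write $p_1=u_i$ and $p_3=u_j$. The pattern gives $d_i<\pi(p_4)=p_4<d_j$, so the fixed point $p_4$ lies between $d_i$ and $d_j$, while $p_2$ lies between $u_i$ and $u_j$.

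For $HDHD$, write $p_2=d_i$ and $p_4=d_j$. The pattern gives $u_i<\pi(p_1)=p_1<u_j$, so $p_1$ lies between $u_i$ and $u_j$, while $p_3$ lies between $d_i$ and $d_j$.

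In both cases $u_i\,H\,u_j\,d_i\,H\,d_j$ is an occurrence of $UHUDHD$. The paper itself only asserts this converse (``for any path $d$ of one of these forms, $\beta^{-1}(d)$ avoids 2143''). This short argument is what is needed to complete either version.
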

\proof
We first show that in a Motzkin path corresponding to a $2143$-avoiding involution $\pi$, it is impossible to have a subsequence of the form
\[
D\,\cdots\,U.
\]

Indeed, a $D$ step closes a transposition $(x,y)$, while a subsequent $U$ step opens a new transposition starting at a later position. This would give an occurrence of 2143 in the corresponding involution.

As noted above, in $\beta(\pi)$ there is at most one connected component different from $H$ and, since the path avoid $DU,$ all $U$-steps must occur in an initial block or in a block separated only by $H$-steps, and all $D$-steps must occur in a final block or in a block separated only by $H$-steps.

Assume that the path contains a subsequence of the form \[
U\cdots H\cdots U\cdots D \cdots H \cdots D.
\] Thus we can consider a  subsequence 
\[
U_0\cdots H\cdots U_l\cdots D_0 \cdots H \cdots D_l
\]

where $U_0$ and $D_0$ are the first up and down steps in the path and $U_l$ and $D_l$ are the last up and down steps.

This subsequence corresponds to an occurrence of the pattern  426153 in the involution $\pi.$ This pattern contains 2143.

Conversely, for any path $d$ of one of these forms, the involution $\beta^{-1}(d)$ avoids 2143.

This completes the proof.

\endproof

Let $\mathcal L$ be the set of Motzkin path of the form $H^{a_1}U^{b_1}\ldots H^{a_k}U^{b_k}H^{a_{k+1}}D^tH^s$ and $\mathcal R$ be the set of paths of the form $H^sU^tH^{a_1}D^{b_1}\ldots H^{a_k}D^{b_k}H^{a_{k+1}}.$ Set $\mathcal C=\mathcal L\cap \mathcal R.$ 
Let $\mathcal L'$ be subset of $\mathcal L$ consisting of the paths with either $s=0$ or $t=0.$

Set $$L(x,y)=\sum_{m\in \mathcal L}x^{|m|}y^{\operatorname{wp}(m)}$$ and define $L'(x,y),R(x,y),R'(x,y),C(x,y)$ analogously and notice that $L(x,y)=R(x,y)$ and $L'(x,y)=R'(x,y).$

By the first return decomposition it follows that 
\begin{align*}
 L'(x,y)= & \frac{1}{1-x}+\frac{x^2y}{1-x}+\frac{x^3y^2}{(1-x)^2}+\frac{x^3y}{1-x}\cdot \left( L'(x,y)-\frac{1}{1-x}\right)  \\ & +\frac{x^2}{1-x}\cdot \left( L'(x,y)-x L'(x,y)-1\right),
\end{align*}

from which we can deduce

$$ L'(x,y)=\frac{1-x+x^2y-2x^3y+x^3y^2-x^2+x^3}{(1-x)\cdot (1-x-x^2+x^3-x^3y)},$$

whence,

$$ L(x,y)=\frac{1}{1-x}+\left( L'(x,y)-\frac{1}{1-x}\right)\cdot \frac{1}{1-x},$$

$$ C(x,y)=\frac{1}{1-x}+\frac{x^2y^2}{(1-x)^2\cdot (1-x^2)}+\frac{x^3y^2}{(1-x)^3\cdot (1-x^2)},$$
and
$$ F_T(x,y)=2 L(x,y)- C(x,y).$$

% and we get

% $$F_T(x,y)=\frac{x^6y^3 - x^7 + 3x^6 - x^5 - 5x^4 + 5x^3 - (x^4 - x^3)y^2 + x^2 + (x^7 - 3x^6 + 2x^5 + 2x^4 - 3x^3 + x^2)y - 3x + 1}{x^8 - 4x^7 + 4x^6 + 4x^5 - 10x^4 + 4x^3 + 4x^2 - (x^8 - 3x^7 + 2x^6 + 2x^5 - 3x^4 + x^3)y - 4x + 1}.$$
\endproof

\subsection{$I_n(4321,2413,3142)$}\label{2413}

Also in this case, the set of paths is very simple. 
% The connected components are precisely the connected paths avoiding the subsequences $DH,\ DU$ and $HU.$ In other terms, the connected components do not have weak valleys. As a consequence, an arbitrary path corresponding to involutions of this kind is a concatenation of paths of the form $U^aH^kD^a,$ $a,k\geq 0.$
Since $2413$ and $3142$ are connected, by Remark~\ref{conn_patt} it is
enough to characterize the connected involutions of this class.

\begin{lem}\label{lem:2413}
A connected involution $\pi\in I_n(4321)$ avoids $2413$ and $3142$ if
and only if $\beta(\pi)$ has no weak valleys, namely, if and only if
$\beta(\pi)=H$ or $\beta(\pi)=U^aH^kD^a$ with $a\ge1$, $k\ge0$.
\end{lem}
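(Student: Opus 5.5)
The plan is to prove the two equivalences separately: first the purely combinatorial description of connected paths with no weak valleys, then the equivalence between having no weak valleys and avoiding $2413$ and $3142$.

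\textbf{Paths without weak valleys.} A weak valley is a factor $DU$, $DH$ or $HU$. If none occurs, then every step following a $D$ is again a $D$, and every step preceding a $U$ is again a $U$. So the path has the form $U^aH^kD^b$, and since it returns to height $0$ we get $b=a$. If $a=0$, connectedness forces the path to be $H$. Conversely, $H$ and $U^aH^kD^a$ clearly have no weak valleys. This gives the second ``if and only if'' of the statement.

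\textbf{No weak valley implies avoidance.} For $\beta(\pi)=U^aH^kD^a$ the involution is explicit. Its one-line notation is
\[
\pi=(a+k+1)\cdots(2a+k)\;(a+1)\cdots(a+k)\;1\cdots a ,
\]
the skew sum of three increasing blocks; for $\beta(\pi)=H$ we have $\pi=1$. In a skew sum of increasing runs, any pattern occurrence restricted to a single block is increasing. Hence every occurrence of a pattern is a skew sum of increasing pieces. Neither $2413$ nor $3142$ is a nontrivial skew sum, and neither is increasing, so $\pi$ avoids both patterns.

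\textbf{A weak valley implies containment.} Suppose $\beta(\pi)$ contains a weak valley factor at positions $i,i+1$. I use two facts throughout. By Proposition~\ref{prop:unitary-labels}, each $D$ closes the oldest open transposition. By Proposition~\ref{prop:increasing-runs}, the partners of $U$ steps increase from left to right. Since $\pi$ is connected and $\beta(\pi)\neq H$, the height of the path is positive at every interior point.

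\emph{Case $HU$.} Here $\pi(i)=i$ and $\pi(i+1)=j>i+1$. The height before the $H$ is positive, so some transposition $(p,q)$ with $p<i<q$ is active. We have $q\ne i+1$, because step $i+1$ is a $U$. Since $p<i+1$ are both openers, $q<j$. The positions $i,i+1,q,j$ then carry the values $i,j,p,i+1$. Because $p<i<i+1<j$, these form the pattern $2413$.

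\emph{Case $DU$.} Step $i$ closes a transposition $(p,i)$, and step $i+1$ opens $(i+1,j)$. The height after step $i$ is positive, so some transposition $(r,s)$ with $r<i<i+1<s$ is still open. The step $D$ closed the oldest open transposition, so $p<r$. The openers $r<i+1$ give $s<j$. The positions $r,i,i+1,s$ carry the values $s,p,j,r$. Since $p<r<s<j$, this is the pattern $3142$.

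\emph{Case $DH$.} This factor is the image of $HU$ under reflection of the path in a vertical line, which corresponds to $\pi\mapsto\pi^{rc}$. The set $\{2413,3142\}$ is closed under reverse--complement, so this case follows from the $HU$ case by Lemma~\ref{lem:rc-avoidance}.

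\textbf{Main obstacle.} The delicate point is choosing the right auxiliary transposition in the second direction: it must be the one guaranteed by connectedness. One must also justify the order relations, namely $p<r$, $s<j$ and $q\neq i+1$. These follow from the label-$1$ property and Proposition~\ref{prop:increasing-runs}, and they need to be checked with care.
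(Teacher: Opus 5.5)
Your proof is correct and follows essentially the same route as the paper. Both arguments use the same description of valley-free connected paths and the same skew-sum argument showing that $U^aH^kD^a$ avoids both patterns. Both also use the same auxiliary active transposition to produce $2413$ in the $HU$ case and $3142$ in the $DU$ case. The only difference is the $DH$ case: you derive it from the $HU$ case via reverse--complement, using that $2413$ and $3142$ are each fixed by it and that reflecting the path preserves connectedness. The paper instead exhibits the occurrence $i,q,p',i+1$ at positions $p'<p<i<i+1$ directly.
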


\begin{proof}
A connected path with no factors $DU$, $DH$, $HU$ consists of a single $H$ step, or is of the form $U^aH^kD^a$ with $a\ge1$.

% Recall that, since $\pi$ avoids $4321$, every down step closes the
% transposition that was opened earliest among the active ones.

Suppose that $\beta(\pi)$ has a weak valley formed by the steps in
positions $i$ and $i+1$. Since the step in position $i$ is not
$U$ and the step in position $i+1$ and the path is connected, in $\pi$ there is a transposition $(p\ q)$  with $p<i<i+1<q$. We distinguish three cases according to the type of the weak valley.
\begin{itemize}
\item $DU$: the step $D$ closes a transposition $(p'\ i)$ and the step
$U$ opens a transposition $(i+1\ q')$. Since $(p'\ i)$ is closed before
$(p\ q)$ and $(i+1\ q')$ is opened after it, we have $p'<p$ and
$q<q'$. The entries in positions $p<i<i+1<q$ are $q,p',q',p$, which form
an occurrence of $3142$.
\item $DH$: as before, $D$ closes $(p'\ i)$ with $p'<p$, and $i+1$ is a
fixed point. The entries in positions $p'<p<i<i+1$ are $i,q,p',i+1$,
which form an occurrence of $2413$.
\item $HU$: $i$ is a fixed point and $U$ opens $(i+1\ q')$ with
$q<q'$. The entries in positions $i<i+1<q<q'$ are $i,q',p,i+1$, which
form an occurrence of $2413$.
\end{itemize}

Conversely, if $\beta(\pi)=U^aH^kD^a$, then
\[
\pi=(a+k+1)\cdots(2a+k)\ (a+1)\cdots(a+k)\ 1\cdots a,
\]
namely, $\pi$ is the concatenation of three increasing sequences, each
consisting of values smaller than all the values of the previous one.
Every pattern contained in such a permutation has the same property,
while neither $2413$ nor $3142$ can be split into increasing blocks of
this kind. Hence $\pi$ avoids both patterns. The case $\beta(\pi)=H$ is
trivial.
\end{proof}

As a consequence, the paths corresponding to involutions in
$I_n(4321,2413,3142)$ are precisely the concatenations of paths of the
form $H$ and $U^aH^kD^a$, $a\ge1$, $k\ge0$. 
Hence, we have

$$G_T(x,y)=x+\frac{x^2y}{1-x^2}+\frac{x^3y^2}{(1-x)(1-x^2)}$$

and 

$$F_T(x,y)=\frac{1}{1-G_T(x,y)}.$$

\subsection{$I_n(4321,3241,4213)$}\label{3241}

First of all, we state a Lemma whose proof is almost trivial but that will be useful in the following. 

\begin{lem}
    Let $\pi$ be an involution in $I_n(4321)$ and let $m=\beta(\pi)$  be the associated Motzkin path. If $\pi$ contains the subsequence $d,b,a,c$ with $a<b<c<d,$ then the four steps in $m$ corresponding to the entries $d,$ $b,$ $a$ and $c$ are either UHDH or UHDD.
\end{lem}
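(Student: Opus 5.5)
The statement concerns an occurrence $d,b,a,c$ with $a<b<c<d$, i.e.\ a $4213$ pattern, at positions $i_1<i_2<i_3<i_4$. I will determine the step type of each entry using Remark~\ref{oss:decreasing} together with Proposition~\ref{prop:increasing-runs}. The key facts are that entries on $U$ steps satisfy $\pi(i)>i$, entries on $D$ steps satisfy $\pi(i)<i$, and entries on $H$ steps are fixed points. Moreover, $U$-entries, $D$-entries and $H$-entries each form increasing subsequences.

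First I use the decreasing subsequence $d,b,a$, which occupies positions $i_1<i_2<i_3$. By Remark~\ref{oss:decreasing}, these three entries lie on steps of three different kinds, so $\{d,b,a\}$ correspond bijectively to $\{U,H,D\}$ in some order. Next I pin down the order.

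Suppose $d$ were on a $D$ step. Then $d=\pi(i_1)<i_1$, so both $d$ and $i_1$ lie in $\{1,\dots,n\}$. The values $b$ and $a$ are smaller than $d$ and occur to the right of $i_1$. One of them is on a $U$ step, say at position $j>i_1$, so its value exceeds $j>i_1>d$. This contradicts the fact that $b,a<d$. Hence $d$ is not on a $D$ step.

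Similarly, suppose $a$ were on a $U$ step. Then $a=\pi(i_3)>i_3$. The entry $d$ or $b$ that sits on a $D$ step is at a position $j<i_3$ and has value less than $j<i_3<a$. This contradicts $d,b>a$. Hence $a$ is not on a $U$ step.

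It remains to exclude the orders $(d,b,a)=(H,U,D)$ and $(U,D,H)$, leaving $(U,H,D)$.

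\begin{itemize}
\item If $d$ is on an $H$ step, then $d=i_1$. But $b$ lies at position $i_2>i_1=d$, and if $b$ is on a $U$ step then $b>i_2>d$, a contradiction.
\item If $a$ is on an $H$ step, then $a=i_3$. If $b$ is on a $D$ step, then $b<i_2<i_3=a$, again a contradiction.
\end{itemize}

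So $d,b,a$ lie on $U,H,D$ steps respectively.

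Finally, I determine the step of $c$. Since $a<c$ with $a$ on a $D$ step at position $i_3<i_4$, the pair $a,c$ is increasing, so no constraint arises directly. Instead I argue that $c$ cannot be on a $U$ step. Suppose it were. Then $c=\pi(i_4)>i_4>i_2=b$, which is consistent so far. But $d$ on a $U$ step at position $i_1$ and $c$ on a $U$ step at position $i_4>i_1$ must satisfy $d<c$ by Proposition~\ref{prop:increasing-runs}, whereas $c<d$. This is a contradiction. Hence $c$ is on an $H$ or a $D$ step, giving $UHDH$ or $UHDD$.

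I expect the case elimination in the second step to be the only place requiring care, but each case reduces to a one-line comparison of a position with a value.
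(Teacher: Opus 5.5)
Your proof is correct and follows the same route as the paper. Like the paper, you apply Remark~\ref{oss:decreasing} to the decreasing subsequence $d,b,a$; you then write out the case analysis that the paper only calls ``straightforward considerations'', using the position--value inequalities for $U$, $H$, $D$ steps to force the order $U,H,D$, and Proposition~\ref{prop:increasing-runs} to rule out $c$ lying on a $U$ step.
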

\proof
Notice that the three elements $d,b,a$ form a decreasing sequence, hence the corresponding steps must be different. Straightforward considerations show that the only two possible cases are UHDH and UHDD.
\endproof

We point out that not every occurrence of UHDH or UHDD in $\beta(\pi)$ corresponds to an occurrence of 4213 in $\pi.$ Consider, for example, the path $\beta(\pi)=UUHDD$ where $\pi=45312,$ which does not contain 4213 at all.

An involution avoids 4213 if and only if each of its connected components avoids the same pattern. Thus we begin considering connected involutions. 

\begin{thm}
 Let $\pi$ be a connected involution in $I_n(4321)$ and let $m=\beta(\pi)$  be the associated Motzkin path. Then $\pi$ avoids $4213$ if and only if $m$ avoids HU and HDH.
\end{thm}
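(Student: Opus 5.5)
The plan is to prove the two implications separately. Throughout I use two facts. First, $4213^{-1}=3241$, so by Lemma~\ref{lem:inverse-avoidance} an involution contains $4213$ if and only if it contains $3241$. Second, Proposition~\ref{prop:increasing-runs} says that in $\pi\in I_n(4321)$ the values on the $U$ steps increase, and so do the values on the $D$ steps.

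\emph{Path condition violated $\Rightarrow$ $\pi$ contains $4213$.}

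\textbf{Case $HU$.} Choose an occurrence of $HU$ in $m$ with the two steps as close as possible. Between them there can then be only $D$ steps, so $m$ has a factor $HD^kU$: a fixed point $i$, and a $U$ at position $j>i$ opening $(j\ q)$.
\begin{itemize}
\item Since $m$ is connected, its height at step $j-1$ is positive. So some transposition $(p\ r)$ is active across the whole factor, with $p<i$ and $r>j$.
\item Since $p<j$ are both $U$ positions, Proposition~\ref{prop:increasing-runs} gives $r<q$.
\item At positions $p<i<j<r$ the entries are $r,i,q,p$. Their relative order is $p<i<r<q$, so they form an occurrence of $3241$.
\end{itemize}
Hence $\pi$ contains $4213$.

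\textbf{Case $HDH$.} We may assume $m$ avoids $HU$, since otherwise the first case already applies. Let $\hat H$ (fixed point $i$), $\hat D$ (position $j$, closing $(p\ j)$) and $\overline H$ (fixed point $k$) be an occurrence, with $i<j<k$.
\begin{itemize}
\item Since $m$ is connected and $k<n$ (a connected path of length greater than one ends with $D$), some transposition $(p'\ r)$ is active at $k$, with $p'<k<r$.
\item Since $m$ avoids $HU$, no $U$ step follows $\hat H$. Hence $p'<i$. Since $(p'\ r)$ is still open when $\hat D$ closes $(p\ j)$, and $D$ steps always close the earliest open transposition, we get $p<p'<i$.
\item At positions $p'<i<j<k$ the entries are $r,i,p,k$, with $p<i<k<r$. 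This is exactly $4213$.
\end{itemize}

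\emph{$\pi$ contains $4213$ $\Rightarrow$ path condition violated.}
Apply the preceding lemma to an occurrence $d,b,a,c$ of $4213$: the four corresponding steps are $UHDH$ or $UHDD$.
\begin{itemize}
\item If they are $UHDH$, the middle three steps give an occurrence of $HDH$.
\item If they are $UHDD$, then $b$ is a fixed point sitting at position $b$. The entry $c$ lies on a $D$ step, so its value $c$ is the position of the $U$ step opening that transposition. Since $c>b$, this $U$ step comes after the fixed point $b$, giving an occurrence of $HU$.
\end{itemize}

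The main obstacle is the $HDH$ case. There one needs a single transposition that opens before $\hat H$ and closes after $\overline H$. The key step is to reduce first to paths avoiding $HU$, so that every transposition active at $k$ was already open at $i$. The remaining care is checking the boundary facts used: $k<n$, and positivity of the height inside a connected path.
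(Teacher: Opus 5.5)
Your proof is correct and follows essentially the same route as the paper. Both directions go through the lemma that reduces an occurrence of $4213$ to $UHDH$ or $UHDD$, the $UHDD$ case is dismissed by the same observation that the $U$ step opening the transposition of $c$ lies after the fixed point $b$, and the $HU$ and $HDH$ cases are handled by exhibiting a transposition that stays active across the relevant steps. The differences are cosmetic: in the $HU$ case you extract the occurrence $3241$ of $4213^{-1}$, a sub-occurrence of the paper's $42513$, and then invoke inverse-closure; in the $HDH$ case you work directly with an arbitrary transposition active at $\overline H$ rather than the paper's last up step within an occurrence of $UHDH$.
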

\proof
The proof consists in three steps
\begin{itemize}
    \item[\textbf{Step 1}] prove that if the connected involution $\pi$ avoids 4213 then $m$ avoids $HU,$ 
    \item[\textbf{Step 2}] prove that if $\beta(\pi)$ avoids $HU,$ none of the occurrences of the subsequence UHDD corresponds to an occurrence 4213 in $\pi,$ and
    \item[\textbf{Step 3}] prove that if $\beta(\pi)$ avoids $HU,$ the presence of an occurrence of the subsequence UHDH implies the existence of an occurrence of 4213 in $\pi.$ 
\end{itemize}
Notice that these three facts, together with the previous lemma, conclude the proof. 

\begin{itemize}
    \item[\textbf{Step 1}] By way of contradiction, let the path $m$ be connected and contain $\hat H U.$
Since the path is connected, there exists at least one up step preceding $\hat H.$ Consider the rightmost such up step $U_1$ and the leftmost up step $U_2$ following $\hat H.$ Denote by $D_1$ and $D_2$ the down steps corresponding to $U_1$ and $U_2,$ respectively. 
Both these down steps must be to the right of $U_2,$ otherwise the path would be not connected. 
Then the steps $U_1,H,U_2,D_1,D_2$ correspond to a subsequence of the permutation order isomorphic to 42513.
    \item[\textbf{Step 2}] Suppose that the path $m$ does not contain HU but contains the subsequence $\hat U \hat H \hat D \overline D.$
    Denote by $a$ and $b$ the integers corresponding to $\hat H$ and $\overline{D}$ in $\pi.$
    If this pattern was order isomorphic to 4213, then we would have $a<b$ and hence the up step corresponding to $\overline{D}$ would follow $\hat H,$ yielding a contradiction.   
    \item[\textbf{Step 3}] Suppose that the path $m$ does not contain $HU$ but contains the subsequence  $\tilde U \hat H \hat D \overline H.$ Then the path contains also the subsequence $\overline U \hat H \hat D \overline H,$ where $\overline{U}$  is the closest up step to the right of $\hat H,$ and thus the last up step in the path since $m$ does not contain HU. The step $\overline{U}$ corresponds to the last (down) step $\overline D$ of the path. Similarly, since $m$ avoids HU, the up step corresponding to $\hat D$ lies before $\hat H.$ Thus the steps $\overline U,$ $\hat H,$ $\hat D$ and  $\overline H$ correspond to 4213. 
\end{itemize}

\endproof

\begin{cor}
     Let $\pi$ be a connected involution in $I_n(4321)$ and let $m=\beta(\pi)$  be the associated Motzkin path. Then $\pi$ avoids $4213$ if and only if $m$ is obtained from a connected Dyck path by inserting a possibly empty factor of adjacent horizontal steps in any non-final node of the last descending run. 
\end{cor}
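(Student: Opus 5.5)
The corollary is a purely path-theoretic reformulation of the preceding theorem. By that theorem, a connected $\pi\in I_n(4321)$ avoids $4213$ if and only if $m=\beta(\pi)$ avoids the subsequences $HU$ and $HDH$. It therefore suffices to show that a connected Motzkin path avoids $HU$ and $HDH$ if and only if it arises from a connected Dyck path by inserting a (possibly empty) block of consecutive $H$ steps at some non-final node of its last descending run. I will prove the two inclusions separately.

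\emph{Paths of the described form avoid both words.} Let $d$ be a connected Dyck path, and write its last descending run as $D^k$, following the final $U$ of $d$. Insert $H^j$ at a node of this run other than the final one, i.e.\ after $D^i$ with $0\le i\le k-1$. All $H$ steps then come after every $U$ step, so $HU$ cannot occur. Any $D$ that follows an $H$ belongs to the remaining segment $D^{k-i}$, and no $H$ comes after those $D$'s, so $HDH$ cannot occur either. The path stays connected: the inserted $H$'s sit at height $k-i\ge1$, which is exactly why the final node is excluded.

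\emph{Connected paths avoiding both words have the described form.} Let $m$ be connected and avoid $HU$ and $HDH$. Since $m$ avoids $HU$, every $H$ step comes after the last $U$ step, so all $H$'s lie in the final stretch of $m$, which consists only of $D$'s and $H$'s. Since $m$ avoids $HDH$, no $D$ separates two $H$'s, so the $H$'s form a single contiguous factor $H^j$ inside the final descending run. Deleting this factor gives a Dyck path $d$ that is still connected. Indeed, every $H$ of $m$ lies strictly above the axis, because $m$ is connected and $H$ cannot be its last step unless $m=H$. So $H^j$ occupies a node of the last descending run of $d$ other than its final node.

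The degenerate path $m=H$ has to be handled by convention. It is connected and avoids both words, yet it contains no Dyck part. I would either note it as an exception or read the statement as allowing it explicitly. This bookkeeping, together with checking that deleting the $H$'s preserves connectivity, is the only delicate point; the rest is a direct translation of the preceding theorem.
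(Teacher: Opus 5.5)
Your argument is correct and is precisely the routine translation that the paper dismisses as ``immediate'': you use the preceding theorem to reduce to connected paths avoiding $HU$ and $HDH$, then observe that all $H$ steps form one contiguous block after the last $U$, at positive height. Your caveat about the one-step path $H$ is also right, since the paper tacitly excludes it here and adds it back through the separate term $x$ in its formula for $G_T(x,y)$.
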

\proof
The proof is immediate.
\endproof

By the previous characterization, we need the generating function of
connected Dyck paths $D(x,s,y)$ counted by semilength ($x$), length of the last
descent ($s$) and number of peaks ($y$). This generating function can be
obtained directly from the corresponding one  for Dyck paths that are not
necessarily connected, $\Omega(x,s,y),$ found by Deutsch in~\cite{Deu}.

$$
\Omega(x,s,y)=1+\frac{ys\,\operatorname{Nar}(x,y)-ys}{y+(1-s)\,(\operatorname{Nar}(x,y)-1)},
$$

thus

$$D(x,s,y)=(\Omega(x,s,y)-1)xs+1+xsy.
$$

By the previous corollary the generating function for the connected involutions in $I(4321,3241,4213)$ (including the empty one), counted by length ($x$) and descents ($y$) can thus be found as 
$$
G_T(x,y)=D(x^2,1,y)+x+\left. \frac{\partial D(x^2,s,y)}{\partial s} \right|_{s=1}\cdot \frac{xy}{1-x}-1
$$

thus 

$$F_T(x,y)=\frac{1}{1-G_T(x,y)}.$$

\subsection{$I_n(4321,3412)$}\label{3412}

The enumeration of this family can be obtained from the results in~\cite{EGGE2004451}.

We use the following characterization, proved in~\cite{Ba6}.

\begin{lem}\label{struttura_3412}
Let $\pi$ be any involution. Then $\pi\in I_n(4321,3412)$ if and only if $\beta(\pi)$ has height at most one. 
\end{lem}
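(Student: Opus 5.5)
The plan is to prove both directions through Biane's bijection. Proposition~\ref{prop:unitary-labels} handles the 4321 part, so the remaining task is to show that, for $\pi\in I_n(4321)$, avoiding $3412$ is equivalent to $\beta(\pi)$ having height at most one. First observe that $\pi$ contains 4321 exactly when some label exceeds $1$. A label of at least $2$ forces height at least $2$, so paths of height at most one automatically carry only labels equal to $1$. It therefore suffices to prove two things: every labelled path of height at most one corresponds to an involution avoiding $3412$, and every $\pi\in I_n(4321)$ whose path reaches height $2$ contains $3412$.

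\emph{Height $\ge 2$ implies $3412$.} Suppose $\beta(\pi)$ reaches height $2$. Take the first $U$ step reaching height $2$, at position $j$. It is preceded by an open transposition $(i\ q)$ with $i<j$, and it opens $(j\ q')$. Since all labels are $1$, Proposition~\ref{prop:increasing-runs} applies: the $U$-entries are increasing, so $q<q'$. We thus have positions $i<j<q<q'$ with $j<q$, because $(i\ q)$ is still open at step $j$. The entries at these positions are $q,q',i,j$, and the order pattern of $q,q',i,j$ with $i<j<q<q'$ is $3412$.

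\emph{Height $\le 1$ implies avoidance.} If the height is at most one, every transposition is of the form $(i\ q)$ where all positions strictly between $i$ and $q$ are fixed points. Such an involution is a direct sum of blocks, each a fixed point or a block $q\,(i{+}1)\cdots(q{-}1)\,i$, i.e.\ $\pi$ is a layered-like sum of permutations of the form $k\,2\,3\cdots(k{-}1)\,1$. Since $3412$ is connected, by Remark~\ref{conn_patt} it suffices to check that no block contains $3412$. A block consists of a large first entry, an increasing middle, and a small last entry. In an occurrence of $3412$, the entries playing the roles of $1$ and $2$ come after the larger entries playing $3$ and $4$, so at most one of them can be the last entry $1$. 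The entry playing $2$ would then be a middle entry, which is preceded by at most the first entry among larger values, while two larger entries are needed before it. Both of those larger entries would have to precede it, but only one entry (the first) precedes the middle entries and exceeds them. This is a contradiction.

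The main obstacle is the first implication, which relies on correctly extracting the two nested open transpositions and on the ordering $q<q'$ guaranteed by Proposition~\ref{prop:increasing-runs}. Once these are in place, the remaining steps are routine checks.
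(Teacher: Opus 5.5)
Your proof is correct. The paper does not prove this lemma itself; it imports it from Barnabei, Bonetti and Silimbani~\cite{Ba6}. So the comparison is between a citation and your self-contained argument, which uses only tools already set up in the paper:
\begin{itemize}
\item Proposition~\ref{prop:unitary-labels}, together with the bound ``label $\le$ height'', to dispose of $4321$;
\item the two transpositions simultaneously active at the first step reaching height $2$, whose entries $q,q',i,j$ form a $3412$;
\item Remark~\ref{conn_patt}, applied to the direct-sum decomposition into fixed points and blocks $k\,2\,3\cdots(k-1)\,1$, for the converse.
\end{itemize}
This is the same style as the paper's own proof of the $3421$ characterization in Subsection~\ref{3421}, where a step at height at least $2$ is turned into two active transpositions. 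Your proof could therefore replace the citation, at the cost of a few lines.

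Two small remarks. First, Proposition~\ref{prop:increasing-runs} is not really needed. If $q'<q$, the positions $i<j<q'<q$ carry $q,q',j,i$, an occurrence of $4321$, which is excluded anyway.

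Second, in the block argument you should state explicitly why the entry playing the role of $2$ cannot be the final entry of the block. The entry playing $1$ precedes it and is smaller, whereas the final entry is the minimum of the block. It cannot be the first entry either, since two entries of the occurrence precede it. So it is a middle entry, and your count of its larger predecessors (only the first entry of the block) finishes the argument.
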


This result allows us to determine the descent distribution over the set $I_n(4321,3412).$

\begin{thm}
 $$F_T(x,y)=\frac{1-x}{(1-x)(1-x-x^2y)-x^3y^2}$$   
\end{thm}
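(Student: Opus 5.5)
By Lemma~\ref{struttura_3412}, the involutions in $I_n(4321,3412)$ are exactly those whose Motzkin path $\beta(\pi)$ has height at most one. By Lemma~\ref{discese_picchi_deboli}, $F_T(x,y)$ is therefore the generating function of Motzkin paths of height at most one, counted by length ($x$) and number of weak peaks ($y$). The plan is to describe these paths as free sequences of elementary blocks, check that weak peaks never straddle two blocks, and sum a geometric series.

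\textbf{Step 1 (decomposition).} A path of height at most one splits uniquely into its connected factors. A connected factor of height at most one is either the single step $H$, or $UH^kD$ with $k\ge 0$: after a $U$ the path sits at height $1$, so it can only take $H$ steps until it takes a $D$ back to the axis. Hence such paths are exactly the words in the blocks $H$ and $UH^kD$ ($k\ge0$), concatenated in any order and with no further constraint.

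\textbf{Step 2 (weak peaks are local to blocks).} A weak peak is a factor $UD$, $UH$ or $HD$. At the junction of two consecutive blocks, the first block ends with $H$ (the block $H$) or with $D$, and the second starts with $H$ or $U$. The possible junction factors are therefore $HH$, $HU$, $DH$ and $DU$, none of which is a weak peak.

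It follows that $\wep$ is additive over blocks. The individual blocks contribute as follows:
\begin{itemize}
\item $H$ has no weak peaks and contributes $x$;
\item $UD$ has one weak peak and contributes $x^2y$;
\item $UH^kD$ with $k\ge1$ has exactly two weak peaks ($UH$ and $HD$; the inner factors $HH$ are not peaks) and contributes $x^{k+2}y^2$.
\end{itemize}

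\textbf{Step 3 (summation).} Summing the contributions of the blocks $UH^kD$ with $k\ge1$ gives $x^3y^2/(1-x)$. The block generating function is then
\[
B(x,y)=x+x^2y+\frac{x^3y^2}{1-x}.
\]
Since arbitrary sequences of blocks are allowed, $F_T=1/(1-B)$. Multiplying numerator and denominator by $1-x$ gives
\[
F_T(x,y)=\frac{1-x}{(1-x)(1-x-x^2y)-x^3y^2},
\]
as stated.

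\textbf{Main obstacle.} The only point requiring real care is Step 2, namely that no weak peak is created or destroyed at block junctions; this reduces to the short case check of the four junction factors given there. As a sanity check, I would compare the first coefficients of $F_T$ with the refinement obtained from the continued fraction of~\cite{EGGE2004451} at $y=1$, and against exhaustive generation.
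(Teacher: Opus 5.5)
Your proof is correct and follows the paper's argument: the paper likewise uses Lemma~\ref{struttura_3412} to reduce to free concatenations of the components $H$ and $UH^kD$ ($k\ge0$), then counts descents as weak peaks via Lemma~\ref{discese_picchi_deboli}. Your check that the junction factors $HH$, $HU$, $DH$, $DU$ are never weak peaks, and the resulting block series $x+x^2y+x^3y^2/(1-x)$, fill in details that the paper dismisses as trivial.
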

\proof
By Lemma \ref{struttura_3412}, we deduce that the connected components of the path $\beta(\pi)$ are either $H$ or $UH^kD$ with $k\geq 0.$

Thanks to Lemma \ref{discese_picchi_deboli}, the conclusion follows trivially. 
\endproof

\section{Beyond forbidden subsequences}\label{beyond}

Recall that the procedure \texttt{guess[T]} suggests a set of forbidden
subsequences for the paths corresponding to a given family of
involutions. In some cases, every path in the family avoids these
subsequences, but not every path avoiding them belongs to the family:
further conditions are needed to characterize the family. In this section we treat such families of involutions.

% avoiding 4321 and another pattern such that the corresponding Motzkin paths cannot be described only by the avoidance of the subsequences produced with \texttt{guess[T]} but require additional constraints. 

\subsection{$I_n(4321,1324)$}\label{1324}

In this case $\mathcal W^{20}_4(T)=\{DHU,\
   DHDH,\
   DUDU,\
   HUHU\}.$ 
  This suggests that every connected involution of this family should correspond to a connected path avoiding every subsequence in $\mathcal{W}_2^{20}.$ As proved in Theorems \ref{connected_1324} and \ref{1324_c}, this is not a sufficient condition to characterize completely the connected paths corresponding to involutions avoiding 1324. The reason for this fact is easy to explain: the set of connected Motzkin paths corresponding to the involutions of this family is not closed by containment, thus it cannot be described in terms of avoidances.
   As an example, consider the involutions $42618375=\beta^{-1}(UHUDUDHD),$ which contains 1324, and $5\ 7\ 3\ 9\ 10\ 2\ 8\ 4\ 6=\beta^{-1}(UUHUDUDHDD),$ which avoids it even if the first path is contained in the second as a subsequence. Notice also that the first of these involutions contains 1324 even if the corresponding path avoids all the subsequences in $\mathcal W^{20}_4.$

\begin{thm}\label{1324_notc}
Let $\pi$ be an involution in $I_n(4321,1324).$ Then $\pi$ is not connected if and only if $\beta(\pi)$ is of the form $U^aH^bD^aH^cU^eH^fD^e$ with $a,b,c,f,e\geq 0$ and where at least two factors among $U^aH^bD^a,$ $H^c$ and $U^eH^fD^e$ are non-empty.
\end{thm}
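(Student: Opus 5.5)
Let $\pi\in I_n(4321,1324)$ be non-connected, with connected components $\pi_1,\dots,\pi_k$, $k\ge2$. These correspond to the connected factors $c_1,\dots,c_k$ of $\beta(\pi)$. The plan is to use that $1324$ is itself non-connected, being $1\oplus 21\oplus 1$. This gives strong restrictions on how non-trivial blocks can be combined, which I would exploit in three steps, followed by a converse check.

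\textbf{Step 1: the middle components.} If a component $c_j$ with $1<j<k$ contains a descent, then choosing any entry of $c_1$, the descent in $c_j$, and any entry of $c_k$ produces $1324$. Hence every middle component has no descent, i.e.\ it is the single step $H$. More generally, a descent in any component, together with an entry of an earlier component and an entry of a later one, gives $1324$. Therefore only the first and last components may contain descents, and all components strictly between them are $H$'s. This accounts for the factor $H^c$.

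\textbf{Step 2: the first and last components.} Suppose $c_1$ is not $H$. Then the last component supplies a larger entry $z$ after all of $c_1$. So $c_1$ must avoid $132$: otherwise a $132$ in $c_1$ followed by $z$ would give a $1324$. Symmetrically, by reverse--complement (Corollary~\ref{cor:rc-bijection} and the reflection property of $\beta$), $c_k$ must avoid $213$. I would then characterize the connected $4321$-avoiding involutions avoiding $132$ as exactly those with $\beta=U^aH^bD^a$. One inclusion is the computation already made in the proof of Lemma~\ref{lem:2413}: such involutions are a concatenation of three increasing blocks with decreasing values, which avoids $132$. For the other inclusion I would show that a connected path with a weak valley yields a $132$. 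Using the three cases of Lemma~\ref{lem:2413}, one has occurrences of $3142$ or $2413$, and both contain $132$. For the $DU$ case, $3142$ contains $142\sim 132$; for $2413$ we get $243\sim132$. This step is the main point to check, and I expect it to go through directly via the case analysis already done there.

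By rc-symmetry, the last component is also of the form $U^eH^fD^e$, and $H$ itself is included by setting $a=0$ or $e=0$.

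\textbf{Step 3: what remains.} After Steps 1 and 2, $\beta(\pi)$ has the form $U^aH^bD^aH^cU^eH^fD^e$. Here we absorb the cases where the first or last component is $H$ into $H^c$, and non-connectedness means that at least two of the three factors are non-empty.

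\textbf{Converse.} For such a path, an occurrence of $1324$ would have to place its central $32$ inside a single component, because a descent can only occur inside a component or never across components since values increase between blocks. Each of the three blocks is a skew sum of increasing runs, so the $32$ lies in the first or last block. In the first block, a $32$ followed by a larger entry $4$ in the same block is impossible, because the values in the block decrease by blocks. So the $4$ must come from a later component. The $1$ must then precede the $32$ within the first block with a smaller value, which would give a $132$ inside $U^aH^bD^a$, impossible by Step 2. The symmetric argument handles the last block. Hence $\pi$ avoids $1324$. Finally, the stated form is non-connected exactly when at least two factors are non-empty.
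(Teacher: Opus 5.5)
Your proof is correct and follows essentially the paper's route: the paper also exploits $1324=1\oplus 21\oplus 1$, writing $\beta(\pi)=mm'$ with $m,m'$ non-empty so that $\beta^{-1}(m)$ avoids $132$ and $\beta^{-1}(m')$ avoids $213$, and then cites~\cite{Ba6} for the shape of the corresponding paths. The differences are minor. You derive the needed characterization (the connected $132$-avoiders in $I_n(4321)$ are exactly $H$ and $U^aH^bD^a$) directly from the weak-valley case analysis of Lemma~\ref{lem:2413}, which makes the argument self-contained. Your converse check of $1324$-avoidance goes beyond what the statement requires, since $\pi$ is assumed to lie in the class and the ``if'' direction only needs such paths to be non-connected; it is, however, what the enumeration uses.
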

\proof

Observe that, if $\beta(\pi)=mm'$ where $m$ and $m'$ are non-empty Motzkin paths, $\beta^{-1}(m)$ avoids 132 and $\beta^{-1}(m')$ avoids 213. Now the assertion follows by the characterization of Motzkin paths corresponding to 132 and 213-avoiding involutions under Biane's map (see~\cite{Ba6}).
\endproof

% Two weak valleys are said to be \emph{consecutive} if no other weak
% valley occurs between them; other steps may lie between them.

\begin{thm}\label{connected_1324}
Let $\pi$ be a connected involution in $I_n(4321)$ and let
$m=\beta(\pi)$.

If $\pi$ avoids $1324$ then every pair of consecutive weak
valleys of $m$ is of one of the following types.

\begin{enumerate}
    \item A weak valley of type $HU$ followed by a weak valley of type $DU$.
    In this case the two valleys belong to a factor
    \[
    \hat H U\,U^{a}H^{b}D^{c}\tilde D \overline{U},
    \]
    where $c< h$ and $h$ is the height of the distinguished step
    $\hat H$.

    \item A weak valley of type $DU$ followed by a weak valley of type $DH$.
    In this case the two valleys belong to a factor
    \[
    DU\,U^{a}H^{b}D^{c}D\hat H,
    \]
    where $a< h$ and $h$ is the height of the distinguished step
    $\hat H$.

    \item A weak valley of type $HU$ followed by a weak valley of type $DH$.
    In this case the two valleys belong to a factor
    \[
    \hat H U\,U^{a}H^{b}D^{c}D\tilde H,
    \]
    where $a< h$ and $h$ is the height of $\tilde H$.
    Equivalently, one may require $c< h'$, where $h'$ denotes the
    height of $\hat H$.
\end{enumerate}
\end{thm}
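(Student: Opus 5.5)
We plan to work throughout with the dictionary of Biane's bijection for $4321$-avoiding involutions. All labels equal $1$ (Proposition~\ref{prop:unitary-labels}), so each $D$ step closes the earliest active transposition. By Proposition~\ref{prop:increasing-runs}, the values in $U$, $D$ and $H$ positions each form increasing sequences. The proof has two parts. First we show that only the three listed pairs of consecutive weak valleys can occur. Then, for each admissible pair, we derive the factor shape and the height inequality.

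\textbf{Step 1: restricting the types.} The weak valleys are $DU$, $DH$ and $HU$. For each forbidden ordered pair of consecutive weak valleys we exhibit a $1324$. The forbidden pairs include $DU$ then $DU$, $HU$ then $HU$, $DH$ then anything, and $DU$ then $HU$ (the latter impossible without an intermediate valley, since between them the path must pass a peak and then reach an $H$ going into a $U$, which creates a $DH$ or an $HU$ first). The tools are the patterns $DHU$, $DHDH$, $DUDU$ and $HUHU$ from $\mathcal W^{20}_4(T)$. For each we construct an explicit $1324$ occurrence, in the style of the proof of Lemma~\ref{lem:2413}. Using connectedness, we pick a transposition $(p\ q)$ spanning the relevant positions and read off its entries. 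For example, for $D_1 H U$, take the value closed at $D_1$ (small), the fixed point $h$, and the opening $U$ with its large value $q'$. Together with the entry at a position between $h$ and $q'$ whose value lies between them, or with the closing position of a transposition spanning everything, this gives $1\,3\,2\,4$. Next we argue that between two consecutive weak valleys the path is a weak-valley-free factor, which necessarily has the form $U^{a}H^{b}D^{c}$ preceded by the $U$ of the first valley and followed by the first step of the second. This yields the three displayed factor shapes.

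\textbf{Step 2: the height conditions.} Consider case~1, the factor $\hat H U U^aH^bD^c\tilde D\overline U$. Let $h$ be the height of $\hat H$, so $h$ transpositions are active at $\hat H$. Because $D$ steps close earliest-opened transpositions first, the $c+1$ down steps close the $h$ transpositions open before $\hat H$ first. If $c\ge h$, then $\tilde D$ closes a transposition $(r\ s)$ with $r>\hat H$, namely one of the $U$'s of the factor. Then the fixed point $\hat H$, the value $s$ at position $r$, the value $r$ at position $s$, and the large value opened at $\overline U$ form $1\,3\,2\,4$. Hence $c<h$. Case~2 is symmetric under reverse-complement, which reflects the path (see the remark before the example). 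This swaps $HU$ with $DH$ and $a$ with $c$, giving $a<h$ with $h$ the height of $\hat H$. Case~3 is handled by the same argument. The equivalence of the two stated forms, $a<h$ versus $c<h'$, follows from the height bookkeeping $h' + 1 + a - (c+1) = h$, i.e.\ $h=h'+a-c$. So $a<h$ iff $c<h'$.

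\textbf{Main obstacle.} The hard part will be Step~1: showing that the $1324$ produced really exists, i.e.\ that the auxiliary spanning transposition exists and its entries fall in the right value windows, uniformly for all forbidden pairs. We would first try to make the choice canonical, taking the transposition closed by the last $D$ before the second valley and the one opened by the first $U$ after the first valley, and then check order relations using the first-opened/first-closed rule.
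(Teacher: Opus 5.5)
\textbf{Step 2 matches the paper.} If $c\ge h$, the $(c+1)$-st down step after $\hat H$ closes a transposition opened after $\hat H$, and then $\hat H,\tilde U,\tilde D,\overline U$ form $1324$. Case 2 follows by reflection, Case 3 by the same argument with $\tilde H$ in place of $\overline U$, and the identity $h=h'+a-c$ gives the stated equivalence.

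\textbf{The error is in Step 1.} There you declare the pair ``$DU$ then $HU$'' impossible without an intermediate valley, and this is false. After a $DU$ valley the path can climb, turn horizontal (the factor $UH$ is a weak peak, not a valley) and climb again. So the factor $D\,U\,U^{a}H^{b}U$ with $b\ge1$ has a $DU$ valley immediately followed by an $HU$ valley. For instance, the connected path $UUDUHUDDD$, i.e.\ $\pi=3\,7\,1\,8\,5\,9\,2\,4\,6$, has exactly these two weak valleys. This case is therefore not vacuous and must be excluded by an actual $1324$, as the paper does.

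Take $\hat D$ (value $v$), $\hat U$ (value $q$), the last horizontal step $f$ before the final $U$, and that final $U$ (value $q'$). Then:
\begin{itemize}
\item $v<f$, since $v$ is an opener before $\hat D$;
\item $f<q$, because no down step lies between $\hat U$ and $f$;
\item $q<q'$, by Proposition~\ref{prop:increasing-runs}.
\end{itemize}
So the four values read $1\,3\,2\,4$. Equivalently, the factor contains $DHU$, or it is the reflection of the $DH$--$DU$ pair you already exclude.

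\textbf{The rest of Step 1 takes a different route from the paper.} You want to prove that $DHU$, $DHDH$, $DUDU$, $HUHU$ are forbidden \emph{subsequences} of connected paths, and then observe that each excluded factor contains one of them. The paper instead works with the explicit factors forced between consecutive valleys. There the four distinguished steps yield $1324$ directly from the first-opened/first-closed rule, and connectedness is needed only for the pair $DH$--$HU$.

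Your four lemmas are true, so your route would also justify the experimental set $\mathcal W^{20}_4(T)$. But each needs a case split driven by connectedness, and in one branch the entry playing the role of $4$ must change.
\begin{itemize}
\item In your $DHU$ sketch, suppose the transposition $(i\ j)$ active at the fixed point $f$ is still open at the $U$ step in position $p$. Then keeping the entry $\pi(p)$ as the $4$ produces $1342$, since $j>p$. You must use instead the positions $p_1<f<j<\pi(p)$, whose values satisfy $v<i<f<p$.
\item $DHDH$ needs a similar split when the second $D$ closes a transposition opened after the first $H$.
\item $DUDU$ and $HUHU$ need one when the transposition opened by the first $U$ closes before the second $D$ or $H$.
\end{itemize}
For the theorem at hand, the paper's factor-level arguments are shorter and avoid these splits.
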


\begin{proof}
Assume first that $\pi$ avoids $1324$.

We begin by excluding several configurations of consecutive weak valleys.

Suppose that $m$ contains two consecutive weak valleys of type $DH$.
Then $m$ contains a factor of the form
\[
\hat D\,\hat H\,H^{a}\,\tilde D\,D^{b}\,\tilde H,
\qquad a,b\ge0.
\]
The four steps
\[
\hat D,\ \hat H,\ \tilde D,\ \tilde H
\]
correspond, under Biane's bijection, to four entries forming an
occurrence of the pattern $1324$, a contradiction.

By the same argument one sees that $m$ cannot contain two consecutive
weak valleys both of type $HU$, nor a pair of consecutive weak valleys
of types $DH$ and $DU$, nor a pair of types $DU$ and $HU$.

Next suppose that two consecutive weak valleys are of types $DH$ and
$HU$. Then $m$ contains a factor
\[
\hat D \hat H H^{k}U,
\qquad k\ge0,
\]
whose horizontal steps are not at ground level.

Since $m$ is connected, this factor must be followed by at least two
down steps: the first down step $D_1$ occurring after the factor and the
down step $D_2$ matched with the final up step of the factor.
The steps
\[
\hat D,\ \hat H,\ D_1,\ D_2
\]
give rise to an occurrence of $1324$ in the corresponding involution,
which is impossible.

Finally, suppose that $m$ contains two consecutive valleys of type $DU$.
Then $m$ contains a factor
\[
DU^{a}H^{b}D^{c}U .
\]
The first two down steps and the last two up steps of this factor
correspond to an occurrence of the pattern $1324$, again a contradiction.

Therefore the only possible pairs of consecutive weak valleys are those
listed in the statement. 

The inequalities on the parameters are exactly
the conditions preventing the constructions above from producing an
occurrence of $1324$. In fact, suppose that, in Case 1, $c\ge h.$ Then, the step $\tilde U$ associated with $\tilde D$ is to the right of $\hat H.$ Hence $\hat H,$ $\tilde U,$ $\tilde D$ and $\overline U$ correspond to an occurrence of 1324 in $\pi.$ Similar considerations hold in the last two cases. 
\end{proof}

\begin{thm}\label{1324_c}
Let $\pi \in I_n(4321)$ be connected. Then $\pi$ avoids $1324$ if and only if $\beta(\pi)$ is connected and
\begin{itemize}
    \item has at most one weak valley,
    \item has exactly two weak valleys and is of one of the following forms 
    $$U^{a+c}H^eU^bH^fD^aH^gD^{b+c}$$ with $a,b,e,g\geq 1$ and $c,f\geq 0$ or
   $$U^{a+b+c}H^fD^bU^{1+e}H^gD^{1+c}H^iD^{a+e}$$ with $a,b,i\geq 1$ and $c,e,f,g\geq 0$ or its symmetric,
   \item has exactly three weak valleys and is of the form 
   $$U^{a+b+c}H^gU^eH^iD^aU^fH^mD^bH^pD^{e+f+c}$$ with $a,b,e,f,g,p\geq 1$ and $c,i,m\geq 0.$
\end{itemize}
\end{thm}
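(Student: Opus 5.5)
The plan is to prove both directions by combining Theorem~\ref{connected_1324} with a direct analysis of the entries of $\pi$ attached to the steps of $\beta(\pi)$. Two facts will be used throughout: every $D$ step closes the earliest open transposition (Proposition~\ref{prop:unitary-labels}), and the $U$, $D$, $H$ entries each form increasing subsequences (Proposition~\ref{prop:increasing-runs}).

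\textbf{Necessity.} Let $\pi$ be connected and $1324$-avoiding.

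\begin{enumerate}
\item First bound the number of weak valleys by three. By Theorem~\ref{connected_1324}, every pair of consecutive weak valleys is $(HU,DU)$, $(DU,DH)$ or $(HU,DH)$. So the sequence of valley types is a word in which $HU$ can only be followed by $DU$ or $DH$, $DU$ only by $DH$, and $DH$ by nothing. Hence the possible sequences are $DH$, $HU$, $DU$, $HU\,DU$, $HU\,DH$, $DU\,DH$ and $HU\,DU\,DH$, so there are at most three valleys.
\item In the two-valley cases, write the path between and around the valleys as runs. Connectedness together with the factor shapes in Theorem~\ref{connected_1324} forces
\begin{itemize}
\item $U^{\ast}H^{+}U^{\ast}H^{\ast}D^{\ast}H^{+}D^{\ast}$ in case $(HU,DH)$;
\item $U^{\ast}H^{\ast}D^{+}U^{\ast}H^{\ast}D^{\ast}H^{+}D^{\ast}$ in case $(DU,DH)$ and, by reflection, in case $(HU,DU)$.
\end{itemize}
\item Then parametrise the run lengths so that the path returns to height $0$ exactly at the end. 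The height inequalities of Theorem~\ref{connected_1324} translate into the splitting of the first $U$-run as $a+c$ or $a+b+c$, with the stated positivity constraints. For instance, $a<h$ becomes ``the first $D$-run is strictly shorter than the initial $U$-run''.
\item The three-valley case $HU\,DU\,DH$ is handled in the same way, applying items 1 and 3 of Theorem~\ref{connected_1324} to the two consecutive pairs simultaneously.
\item In each case, the lower bounds $\geq1$ come from the requirement that the valley actually occurs; for example, $e\ge1$ says the $H$ run before the second $U$ run is nonempty.
\end{enumerate}

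\textbf{Sufficiency.} For each listed shape, and for paths with at most one weak valley, compute $\pi=\beta^{-1}(m)$ explicitly as a concatenation of increasing blocks, as was done in Lemma~\ref{lem:2413}.

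\begin{itemize}
\item A path with at most one valley has the form $U^aH^bD^{a'}H^kD^{a-a'}$ or its reflection, possibly with $U$ runs instead of $H$.
\item By Remark~\ref{oss:decreasing}, an occurrence of $1324$ contains the decreasing pair $3\,2$ only on steps of different kinds.
\item In each shape, the block structure of values shows that any candidate occurrence would need its ``$1$'' and ``$4$'' entries in positions that the shape does not provide. The height constraints ($c<h$ and so on) are exactly what guarantees that the $U$ matched to a middle $D$ lies before the relevant $H$. This is the converse of the argument at the end of the proof of Theorem~\ref{connected_1324}.
\end{itemize}

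\textbf{Main obstacle.} The hardest part is the sufficiency check, especially for the three-valley family, where $\pi$ has many blocks. I would first try to reduce it: a $1324$ occurrence must use at least one entry from each of two weak valleys. That lets me restrict attention to occurrences spanning specific blocks, rather than checking the whole permutation.

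A secondary concern is the necessity parametrisation. Theorem~\ref{connected_1324} only constrains pairs of consecutive valleys, so I must check that it also forbids extra $U$ or $D$ runs between valleys. I expect that weak peaks such as $UD$ inside a factor would create additional valleys, which settles this.
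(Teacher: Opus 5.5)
Your plan is the paper's own: bound the number of weak valleys by Theorem~\ref{connected_1324}, read off the shapes, then check the converse block by block. For zero, one or two valleys this works, because every pair of valleys is then consecutive and Theorem~\ref{connected_1324} supplies exactly the needed inequalities.

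\textbf{The gap is in the three-valley case.} There the consecutive pairs are $(HU,DU)$ and $(DU,DH)$, i.e.\ items 1 and 2. Item 3 concerns an $HU$ valley \emph{immediately} followed by a $DH$ valley, so it does not apply.

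Write the path as $U^{P}H^{g}U^{e}H^{i}D^{q}U^{f}H^{m}D^{r}H^{p}D^{s}$. Items 1 and 2 give only $q\le P$ and $r\le P+e-q$. The statement instead requires $c=P-q-r\ge 0$. This is strictly stronger, and it makes items 1 and 2 redundant.

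Concretely, take $UHUDUDHD$ ($P=e=q=f=r=1$). It satisfies both pairwise conditions, yet $\beta^{-1}(UHUDUDHD)=42618375$ contains $1324$ via the entries $2,6,3,7$. This is the paper's own example in Subsection~\ref{1324}.

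So $c\ge 0$ needs a separate argument for the non-consecutive pair $HU\ldots DH$:
\begin{enumerate}
\item Since $\pi$ avoids $4321$, the $k$-th down step closes the $k$-th opened transposition.
\item If $q+r>P$, the $(P+1)$-st down step precedes $H^p$ and closes the first $U$ of $U^e$.
\item Then the last $H$ of $H^g$, that $U$, its $D$, and the first $H$ of $H^p$ form $1324$.
\end{enumerate}
This is the item-3 argument run across the intermediate $DU$ valley. The paper's one-line proof is silent on this point as well, so you cannot defer to it.

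\textbf{A smaller slip in your sample translation of $a<h$.} In the $(HU,DH)$ case, item 3 says the $D$-run is \emph{at most} as long as the initial $U$-run, with equality allowed. That is why the first form has $c\ge 0$ rather than $c\ge 1$. For example, $UHUDHD$ gives $426153$, which avoids $1324$ but would be excluded by ``strictly shorter''.

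``Strictly shorter'' is merely connectedness in the $(DU,DH)$ case. There, item 2 says something different: the second $D$-run does not exceed the height of the $DU$ valley. That condition is $a\ge 1$ in the second form.
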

\proof
Let $\pi\in I_n(4321,1324)$ be a connected involution. By the previous theorem, it follows that the connected path $\beta(\pi)$ has at most three weak valleys, since pairs of consecutive weak valleys are constrained to be either $HU-DU,$ or $DU-DH,$ or $HU-DH.$ Taking into account the conditions of the previous theorem, it is immediate to realize that the only possible forms are those given in the statement. 

Conversely, it is straightforward to check that any connected path of the form given in the statement corresponds to an involution avoiding 1324. 
\endproof

\begin{thm}
We have $$F_T(x,y)=A(x,y)/B(x)$$
where
$$B(x)=(1+x)^5(1-x)^7$$ and 
\begin{align*}
A(x,y)=& (y^4 - 4y^3 + 4y^2 - y)x^{12} + (y^4 - 2y^2 + 1)x^{11}\\ & - (3y^4 - 13y^3 + 12y^2 - 5y + 1)x^{10} - (3y^4 - 3y^2 + 5)x^9 \\ & + (2y^4 - 12y^3 + 14y^2 - 10y + 5)x^8 + (2y^4 - 2y^3 + y^2 + 10)x^7 \\ & + (y^4 + 3y^3 - 8y^2 + 10y - 10)x^6 + (2y^3 - 3y^2 - 10)x^5 \\ & + (2y^2 - 5y + 10)x^4 + (y^2 + 5)x^3 + (y - 5)x^2 - x + 1.
\end{align*}
\end{thm}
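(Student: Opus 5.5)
The generating function is assembled from the structural results for this class, Theorems~\ref{1324_notc} and~\ref{1324_c}. Throughout, the length and the number of weak peaks of each shape are read off via Lemma~\ref{discese_picchi_deboli}. Every admissible path is either empty, connected (Theorem~\ref{1324_c}), or of the non-connected form $U^aH^bD^aH^cU^eH^fD^e$ (Theorem~\ref{1324_notc}). We therefore write
\[
F_T(x,y)=1+G_T(x,y)+N_T(x,y),
\]
with $G_T$ the connected part and $N_T$ the non-connected part. Each piece is a finite sum of geometric series in the free exponents, so $F_T$ is rational. All denominators are products of factors $(1-x)$ and $(1-x^2)$, which explains $B(x)=(1+x)^5(1-x)^7$.

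For $N_T$, let $K(x,y)=\frac{x^2y}{1-x^2}+\frac{x^3y^2}{(1-x)(1-x^2)}$ be the generating function of the blocks $U^aH^bD^a$ with $a\ge1$. The $H^b$ block contributes one extra weak peak exactly when $b>0$, because $UD$ becomes $UH\cdots HD$, which has two weak peaks. The middle block $H^c$ is at ground level and creates no weak peaks by itself: a weak peak needs $UH$ or $HD$, and $D$ followed by $H$ is a valley. Hence $N_T=(1+K)^2\frac{1}{1-x}-1-K-\frac{x}{1-x}$, obtained by inclusion--exclusion on which of the three factors are non-empty, keeping only configurations with at least two non-empty factors.

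For $G_T$ we treat each class of Theorem~\ref{1324_c} separately.
\begin{itemize}
\item Connected paths with no weak valley are $H$ and $U^aH^kD^a$, with generating function $x+K$.
\item Connected paths with exactly one weak valley are the paths $U^aH^bD^c\,X\,U^dH^eD^f$-type shapes with a single $DU$, $DH$ or $HU$ transition. Each is parametrized by a few run lengths subject to the balance and connectivity constraints, namely strict positivity of the height at the valley. We sum these case by case.
\item The two-valley forms are summed directly. Their symmetric counterparts, obtained by vertical reflection, have the same generating function by the reverse--complement symmetry (Corollary~\ref{cor:rc-bijection}), so the non-symmetric family is counted twice. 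We must check that no form is self-symmetric; otherwise its intersection has to be subtracted.
\item The three-valley form is summed directly.
\end{itemize}
In every sum, each parameter allowed to be $0$ on an $H$-run is split into the cases $0$ and $\geq1$, since a non-empty $H$-run adds a weak peak. Each $U/D$ exponent shared between runs contributes a factor $x^2$ per unit, giving a $1/(1-x^2)$ factor, while each free $H$-run gives $x/(1-x)$.

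The main obstacle is bookkeeping. We must enumerate the one-valley shapes correctly and identify exactly which adjacencies produce weak peaks in each shape. For instance, in $U^{a+c}H^eU^b\cdots$ the transition $HU$ is a valley and not a peak, while $UH$ after $U^{a+c}$ is a peak only when $e\ge1$, which is guaranteed here. We must also handle degenerate parameter values ($c=0$, $f=0$, and so on) without double counting. Once every summand is written, we combine them over the common denominator $(1+x)^5(1-x)^7$ with computer algebra to obtain $A(x,y)$. Finally, we check the result against exhaustive generation of $I_n(4321,1324)$, refined by descents, for small $n$, as described in Section~\ref{sec:methodology}.
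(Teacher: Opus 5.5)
Your route is the paper's: its proof simply sums geometric series over the shapes of Theorems~\ref{1324_notc} and~\ref{1324_c}, as in Theorem~\ref{GF_1243}. However, the one formula you write down explicitly, the non-connected part, is wrong, so the assembly cannot produce $A(x,y)$. Expanding your expression gives
\[
(1+K)^2\frac{1}{1-x}-1-K-\frac{x}{1-x}=\frac{K^2}{1-x}+\frac{2xK}{1-x}+K .
\]
Two things go wrong here.
\begin{itemize}
\item Your inclusion--exclusion subtracts only one of the two configurations in which a single $U^aH^bD^a$ block is non-empty. So the connected paths $U^aH^bD^a$ stay in $N_T$ and are counted again by the term $x+K$ of $G_T$.
\item With blocks restricted to $a\ge1$, the condition ``at least two non-empty factors'' discards the paths $H^c$ with $c\ge2$. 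These are the identity involutions, which are non-connected and avoid $1324$. The literal statement of Theorem~\ref{1324_notc} covers them only through the degenerate reading $U^0H^bD^0=H^b$, which your $K$ excludes.
\end{itemize}
Already at $n=2$ your $F_T$ has coefficient $2y$: the involution $21$ is counted twice and $12$ is missing. By contrast, $A/B$ gives $1+y$.

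The fix is to parametrize by connected components rather than by the three factors. When $\beta(\pi)$ has at least two components, the first avoids $132$ and the last avoids $213$. Hence each of them is $H$ or $U^aH^bD^a$ with $a\ge1$, with generating function $x+K$. Every middle component must be a single $H$: a descent inside it, together with one entry of the first component and one of the last, would give $1324$. The junctions between components ($DH$, $HH$, $HU$, $DU$) are never weak peaks, so
\[
N_T=\frac{(x+K)^2}{1-x},
\]
which differs from your expression by $\frac{x^2}{1-x}-K$. With this correction, and the connected shapes summed as you describe, the computation goes through as in the paper. Note that reflection doubles only the $(DU,DH)$ two-valley family, since the $(HU,DH)$ family is mapped to itself.
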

\proof
The result follows from Theorems~\ref{1324_notc} and~\ref{1324_c} by using arguments similar to those employed in the proof of Theorem~\ref{GF_1243}.
\endproof

\subsection{$I_n(4321,2341,4123)$}\label{2341}

In this case, it is easy to see that the connected Motzkin paths corresponding to connected involutions in $I(4321,2341,4123)$ avoid the subsequences %$UUU$ e $HHHH.$
in $\mathcal W_4^{20}=\{DU,\ DDD,\
   DDH,\
   DHH,\
   HHU,\
   HUU,\
   UUU,\
   HHHH\}.$

However the converse cannot be true for a simple reason. The set of connected Motzkin paths corresponding to involutions avoiding $4321$ and $2341$ are not closed under
taking connected subsequences. For example, the involution  $52341=\beta^{-1}(UHHHD)$ contains 2341 (even if the path avoids all the listed subsequences) while  $5274163=\beta^{-1}(UHUHDHD)$ does not contain it, even if $UHHHD$ is contained in $UHUHDHD.$ This shows that this set of connected paths cannot be defined through avoidances. 

Nonetheless, the fact that the connected paths corresponding to $I_n(4321,2341,4123)$ avoid all the subsequences in $\mathcal W_4^{20},$ is sufficient to conclude that such
 paths must have a length not exceeding 7. 

The corresponding not-necessarily-connected paths are arbitrary sequences of connected components.  
This gives 

$$F_T(x,y)=\frac{1}{1-G_T(x,y)},$$

where the generating function for the connected components is, by a direct inspection,

$$G_T(x,y)=x+x^2y+x^3y^2+x^4y^2+x^4y+3x^5y^2+3x^6y^3+x^6y^2+x^7y^4.$$

\section{Exceptional classes}\label{exceptional}

In this section we consider the only two cases in which the procedure \texttt{guess[T]} applied to a single pattern does not produce any set of avoided subsequences. The reason, as we will see below, is that the corresponding set of connected Motzkin paths cannot be described in terms of avoidance of any subsequence. 

The \emph{height} of a horizontal step of a Motzkin path is the common
height of its two endpoints.

\subsection{$I_n(4321,3421,4312)$}\label{3421}

\begin{thm}
A permutation $\pi\in I_n(4321)$ contains the pattern $3421$ (and hence the pattern $4312$) if and only if $\beta(\pi)$ contains at least one $H$ step at height greater than one. 
\end{thm}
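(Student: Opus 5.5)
We prove both implications directly from Biane's bijection, using Remark~\ref{oss:decreasing} and the fact that, for $\pi\in I_n(4321)$, every $D$ step closes the earliest opened active transposition (all labels equal $1$). Since $\pi$ is an involution, containing $3421$ is equivalent to containing $4312=3421^{-1}$ by Lemma~\ref{lem:inverse-avoidance}, so it suffices to treat $3421$.

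\emph{Step 1: an $H$ step at height $\ge 2$ yields $3421$.}
Let $\hat H$ be a horizontal step at position $k$ and height $h\ge2$. Then at least two transpositions $(p_1\,q_1)$ and $(p_2\,q_2)$ are active, with $p_1<p_2<k<q_1<q_2$; the ordering $q_1<q_2$ holds because transpositions are closed in the order they were opened. The entries in positions $p_1<p_2<k<q_1$ are $q_1,q_2,k,p_1$. Their relative order is $q_1<q_2$, with $k$ and $p_1$ smaller than both, and $p_1<k$. Hence the pattern is $3421$. This is a direct check.

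\emph{Step 2: an occurrence of $3421$ forces an $H$ step at height $\ge2$.}
Suppose entries $c,d,b,a$ sit in positions $i_1<i_2<i_3<i_4$ with $a<b<c<d$. The subsequences $d,b,a$ and $c,b,a$ are decreasing, so by Remark~\ref{oss:decreasing} the steps at $i_2,i_3,i_4$ are pairwise of different kinds, and likewise for $i_1,i_3,i_4$. Therefore the steps at $i_1$ and $i_2$ have the same kind, and we case-split on the possible assignments of $\{U,H,D\}$ to the steps at $i_2,i_3,i_4$. Several facts restrict the cases:
\begin{itemize}
\item a $U$ step carries a value larger than its position;
\item a $D$ step carries a value smaller than its position;
\item an $H$ step carries its own position as value;
\item entries on steps of the same kind are increasing (Proposition~\ref{prop:increasing-runs}).
\end{itemize}

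I expect most assignments to be impossible. The surviving case should be the one where the steps at $i_1,i_2$ are $U$, the step at $i_3$ is $H$, and the step at $i_4$ is $D$. In that case $\pi(i_1)=c>b=i_3$ and $\pi(i_2)=d>i_3$, so the transpositions opened at $i_1$ and $i_2$ are both still active at position $i_3$. The $H$ step at $i_3$ therefore has height at least $2$.

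\emph{Main obstacle.}
The main difficulty is ruling out the other cases cleanly. These are the configurations in which the steps at $i_1,i_2$ are $H$ or $D$, or in which the step at $i_3$ is $U$ or $D$.
\begin{itemize}
\item If the steps at $i_1,i_2$ are both $H$, then $c=i_1$, and $b<c$ lies at position $i_3>i_1$. So the step at $i_3$ is $D$, and the step at $i_4$ must be $U$; but then $a>i_4>i_1=c$, a contradiction.
\item If the steps at $i_1,i_2$ are both $D$, then the steps at $i_3,i_4$ are $U$ and $H$ in some order. A $U$ step at a position after $i_1$ carries a value larger than $i_1>c$, contradicting that $a$ and $b$ are both smaller than $c$.
\item If the steps at $i_1,i_2$ are both $U$ and the step at $i_3$ is $D$, then the step at $i_4$ is $H$. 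So $a=i_4>i_3>b$, contradicting $a<b$.
\end{itemize}
These inequalities on positions versus values should close every case except the $UUHD$ one.
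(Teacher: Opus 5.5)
Your proof is correct and follows the same route as the paper. Your Step~1 (the entries $q_1,q_2,k,p_1$ at positions $p_1<p_2<k<q_1$ form $3421$) is exactly the paper's argument for that direction, and your case analysis via Remark~\ref{oss:decreasing} and the position--value inequalities spells out the ``straightforward considerations'' by which the paper forces the steps to be $U,U,H,D$; your three excluded configurations together with $UUHD$ exhaust all six possible assignments, so the hedged ``should'' can be replaced by a definite conclusion.
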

\proof
Let $\pi\in I_n(4321)$ be a permutation containing a subsequence $c,d,b,a$ with $a<b<c<d$. Let $S_1,S_2,S_3, S_4$ be the steps corresponding to $c,d,b$ and $a,$ respectively, in the Motzkin path $\beta(\pi).$  Straightforward considerations based on the definition of the Biane's map show that we must have $S_1=S_2=U,$ $S_3=H$ and $S_4=D.$ 

% Indeed, the entries $c$ and $d$ must correspond to the opening of two transpositions, while $b$ must be a fixed point and $a$ must close the most recently opened transposition. Since $\pi$ avoids $4321$, all labels in $\beta(\pi)$ are equal to $1$, hence the two transpositions corresponding to $c$ and $d$ are properly nested.

Consequently, when the horizontal step $S_3$ corresponding to $b$ is read, the two transpositions opened by $S_1$ and $S_2$ are still active. Therefore $S_3$ is a horizontal step at height at least $2$.

Conversely, suppose that $\beta(\pi)$ contains a horizontal step $H$ at height greater than $1$. Let $b$ be the fixed point corresponding to this horizontal step. Since the height is at least $2$, at least two transpositions are open when $b$ is read. Let
\[
(x,y)\qquad\text{and}\qquad(z,t)
\]
be the corresponding transpositions, with
\[
x<z<b<y<t.
\]
Thus the symbols $y,t,b,x$ appear in the permutation in this order and it
is precisely an occurrence of the pattern $3421$.

\endproof

Let $\mathcal M'$ be the set of Motzkin paths that do not contain $H$ steps at height greater than one. Let $\mathcal M_0$ (resp. $\mathcal M_L$, $\mathcal M_R$, $\mathcal M_{RL}$) be the subset of $\mathcal M'$ consisting of the paths that neither start nor end (resp.  start but do not end, end but do not start, both start and end) with an $H$  step.

Set $$M_0(x,y)=\sum_{m\in \mathcal{M}_0}x^{|m|}y^{\wep(m)}$$
and define $M_L,$ $M_R$ and $M_{LR}$ analogously.  Notice that $M_L=M_R$ by symmetry. 

By the first return decomposition we obtain the following relationships  

$$M_0(x,y)=1+x^2(N-1)(M_0(x,y)+M_L(x,y))+x^2y(M_0(x,y)+M_L(x,y)),$$

$$M_L(x,y)=x(M_0(x,y)+M_L(x,y)-1), $$

$$M_{LR}(x,y)=x(M_R(x,y)+M_{LR}(x,y))+x,$$

with $N=\operatorname{Nar}(x^2,y),$
where $\operatorname{Nar}(x,y)$ is the Narayana generating function (se Subsection \ref{1432}).
%$$\operatorname{Nar}(x,y)=\frac{1-x(1+y)-\sqrt{(1-x(1+y))^2-4yx^2}}{2x}+1$$
% is the  generating function of Narayana numbers, namely, the generating function of Dyck paths counted by semilength ($x$) and number of peaks ($y$).

Straightforward computations give 

$$M_0(x,y)=\frac{1-x-x^3y-x^3(N-1)}{1-x-x^2y-x^2(N-1)},$$

% $$M_R(x,y)=M_L(x,y)=\frac{xM_0(x,y)-x}{1-x}, $$

%M_R=(x^3y+x^3(N-1))}{(1-x-x^2y-x^2(N-1))

%$$M_{LR}=\frac{xM_R(x,y)+x}{1-x},$$

$$M_L(x,y)=M_R(x,y)=\frac{(N-1)x^3+x^3y}{1-x-x^2y-x^2(N-1)},$$

$$M_{LR}(x,y)=\frac{x-(N-1)x^3-x^3y}{1-x-x^2y-x^2(N-1)},$$

and, applying once more the first return decomposition, we get

$$F_T(x,y)=\frac{1}{1-x-2x^2y M_R(x,y)-x^2y^2M_{LR}(x,y)-x^2y-x^2(M_0(x,y)-1)}.$$

\subsection{$I_n(4321,4231)$}
\label{4231}

Two horizontal steps of a path are
\emph{consecutive} if no horizontal step lies between them. 

\begin{defn}\label{def:admissible}
A Motzkin
path $m$ is \emph{admissible} if, for every pair of consecutive
horizontal steps of $m$, the number of down steps lying between them is
at least equal to the height of the first of the two. 
\end{defn}

\begin{thm}\label{thm:4231}
 Let $\pi\in I(4321).$ Then $\pi$ avoids 4231 if and only if $\beta(\pi)$ is admissible.    
\end{thm}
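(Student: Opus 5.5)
The plan is to first classify completely the occurrences of $4231$ in an involution $\pi\in I_n(4321)$, in the spirit of the lemma used for $3241,4213$ and of the theorem on $3421$. Let $d,b,c,a$, with $a<b<c<d$, occupy positions $p_1<p_2<p_3<p_4$ of $\pi$. Both $d,b,a$ and $d,c,a$ are decreasing subsequences. By Remark~\ref{oss:decreasing}, the steps of $\beta(\pi)$ attached to $d,b,a$ are of three different kinds, and so are those attached to $d,c,a$.

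I would prove that necessarily $d$ corresponds to a $U$ step, $a$ to a $D$ step, and $b,c$ to $H$ steps. The cases to exclude are the following.
\begin{itemize}
\item If $b$ corresponded to $U$ and $c$ to $H$, or $b$ to $H$ and $c$ to $D$, or similar mixed assignments, we would need to compare positions and values. For instance, $b$ at a $U$ step means $b>p_2$. Combined with Proposition~\ref{prop:increasing-runs}, which says the entries of a single step type increase, this should produce a contradiction with the positions of $d$ and $a$.
\item If $b$ and $c$ were both $U$ or both $D$, the two triples would force $d$ and $a$ into the two remaining kinds in both triples. One then checks that the partner entries of $b$ and $c$, together with $d$ or $a$, create a $4321$.
\end{itemize}
This case analysis is the step I expect to be the main obstacle. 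I would organise it by the kind of the step of $d$: it cannot be $D$, since $d$ is the largest value and sits to the left of smaller ones. I would use repeatedly that under $4321$-avoidance all labels are $1$ (Proposition~\ref{prop:unitary-labels}), so transpositions are closed in the order they are opened.

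Once this is established, $b<c$ are fixed points, $(p_1\ d)$ is a transposition with $p_1<b$ and $d>c$, and $(a\ p_4)$ is a transposition with $a<b$ and $p_4>c$. Conversely, a single transposition $(x\ y)$ with $x<b<c<y$ already gives the occurrence $y,b,c,x$ of $4231$. Hence $\pi$ contains $4231$ if and only if there are two fixed points $b<c$ and a transposition $(x\ y)$ with $x<b<c<y$. We may also take $b,c$ consecutive fixed points, that is, consecutive $H$ steps: if $b<b'<c$ with $b'$ fixed, the same transposition spans $b,b'$.

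Finally I would translate this into path language. Let $h$ be the height of the $H$ step of $b$; the $h$ transpositions active at $b$ are exactly those with $x<b<y$. Every transposition opened after $b$ is opened later than these $h$, and all labels equal $1$, so each $D$ step after $b$ closes the earliest-opened active transposition. Hence the $D$ steps strictly between the $H$ steps of $b$ and $c$ close, in order, the $h$ transpositions active at $b$ before any other. So some transposition spans both $b$ and $c$ exactly when fewer than $h$ down steps lie between them. Negating gives: $\pi$ avoids $4231$ if and only if every pair of consecutive horizontal steps has at least as many $D$ steps between them as the height of the first one. This is precisely admissibility in the sense of Definition~\ref{def:admissible}, which proves Theorem~\ref{thm:4231}.
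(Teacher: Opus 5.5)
Your proposal is correct and follows essentially the paper's route: put the occurrence $d,b,c,a$ on steps $U,H,H,D$ via Remark~\ref{oss:decreasing}, reduce to a transposition spanning two consecutive fixed points, and use that (all labels being $1$) the $h$ transpositions active at a horizontal step are the first ones closed by the following down steps. The case analysis you expect to be the main obstacle is in fact immediate: in any decreasing triple the first entry must sit on a $U$ step and the last on a $D$ step (otherwise the two later, smaller entries would both be $D$ steps, or the two earlier, larger ones both $U$ steps), so applying this to $d,b,a$ and to $d,c,a$ puts $b$ and $c$ on $H$ steps, with no mixed cases left to examine.
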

\proof
% Since $\pi$ avoids $4321$, all labels of $\beta(\pi)$ equal $1$, so each
% down step closes the transposition that was opened earliest among the
% active ones.
Suppose that $\beta(\pi)$ is not admissible: there are two consecutive horizontal steps $H_1$ and $H_2$ at positions $f<f'$ such that $H_1$ has height $h$ and fewer than $h$
down steps lie between them. The $h$ transpositions active at $f$ are
closed, in the order in which they were opened, by the first $h$ down
steps following $H_1$; hence at least one of them, say $(i\ j)$ with
$i<f<f'<j$, is still active at $f'$. The entries $j,f,f',i$ in positions
$i,f,f',j$ form an occurrence of $4231$.

Conversely, let $v_1v_2v_3v_4$ be an occurrence of $4231$ at positions
$p_1<p_2<p_3<p_4$. Both $v_1v_2v_4$ and $v_1v_3v_4$ are decreasing, so,
by Remark~\ref{oss:decreasing} and a direct check of the possible cases,
the steps corresponding to $v_1,v_2,v_4$ (and to $v_1,v_3,v_4$) are
$U,H,D$ in this order. Hence $p_2$ and $p_3$ are fixed points, and the
transposition $(p_1\ v_1)$ satisfies $p_1<p_2<p_3=v_3<v_1$. Thus this
transposition is active at two consecutive fixed points $f<f'$ with
$p_2\le f<f'\le p_3$. As shown above, this happens only if fewer than
$h$ down steps lie between $f$ and $f'$, where $h$ is the height of
$f$. Therefore $\beta(\pi)$ is not admissible.
\endproof

We denote by $\mathcal A$
the set of admissible Motzkin paths, so, by Lemma~\ref{discese_picchi_deboli}, we have
\[
F_T(x,y)=\sum_{m\in\mathcal A}x^{|m|}\,y^{\wep(m)} .
\]

The aim of this subsection is to determine a functional equation that
characterizes $F_T(x,y)$ (Theorem~\ref{thm:FE4231}). The main idea is to
cut an admissible path at its horizontal steps: the pieces are words
over $\{U,D\}$, and the pieces that are subject to a constraint can be
counted in closed form.

\subsubsection{Notation and conventions}

In this subsection we recall some basic facts and introduce some notations useful in the following. 

First of all, recall that, if
$f(u)=\sum_{n\ge0}f_n(u)x^n\in\mathbb Q[y,u][[x]]$ and $g$ is a formal
power series in $x$ with zero constant term (whose coefficients lie in
$\mathbb Q[y]$ or in $\mathbb Q[y,u]$), then the substitution
\[
f(g)=\sum_{n\ge0}f_n(g)\,x^n
\]
 is well defined. 
 % since the $n$-th summand has valuation at least $n.$

Throughout this subsection we use the following notation:
\begin{align*}
K(u)&=-xu^2+\bigl(1+(1-y)x^2\bigr)u-x,\\[1mm]
\theta(u)&=1+\frac{yxu}{1-xu}=\frac{1-(1-y)xu}{1-xu},\\[1mm]
\omega(u)&=x\,\theta(u)=\frac{x\bigl(1-(1-y)xu\bigr)}{1-xu},
\end{align*}
and we denote by $u_0=u_0(x,y)$ the unique formal power series in $x$
with zero constant term such that $K(u_0)=0$, namely
\[
u_0=\frac{1+(1-y)x^2-\sqrt{\bigl(1+(1-y)x^2\bigr)^2-4x^2}}{2x}
=x+yx^3+O(x^5).
\]
Finally, we set
\[
\lambda(u)=\frac{x\bigl(1-(1-y)xu\bigr)}{1-xu_0}.
\]

\begin{lem}\label{lem:kernel-identities}
We have
\[
K(u)=(1-xu)\bigl(u-\omega(u)\bigr).
\]
Consequently $\omega(u_0)=\lambda(u_0)=u_0$.
\end{lem}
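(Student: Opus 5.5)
The plan is to expand the right-hand side directly and compare it with $K(u)$; this is a polynomial identity in $u$ after clearing the denominator $1-xu$. By definition of $\omega$,
\[
(1-xu)\bigl(u-\omega(u)\bigr)=(1-xu)u-x\bigl(1-(1-y)xu\bigr).
\]
Expanding gives $u-xu^2-x+(1-y)x^2u$. Collecting terms yields $-xu^2+\bigl(1+(1-y)x^2\bigr)u-x$, which is exactly $K(u)$. No cancellation of the factor $1-xu$ is needed, because $\omega(u)(1-xu)$ is already a polynomial. This gives the identity as an equality in $\mathbb Q[y,u][[x]]$, or even in $\mathbb Q[x,y,u]$.

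For the consequences I will substitute $u=u_0$. This is legitimate by the substitution convention recalled above, since $u_0$ has zero constant term. Then $1-xu_0$ is invertible in $\mathbb Q[y][[x]]$ because its constant term is $1$.

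From $K(u_0)=0$ and the factorization, $(1-xu_0)\bigl(u_0-\omega(u_0)\bigr)=0$. The ring $\mathbb Q[y][[x]]$ is an integral domain and the first factor is a unit, so $\omega(u_0)=u_0$.

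Finally, $\lambda(u)$ and $\omega(u)$ have the same numerator $x\bigl(1-(1-y)xu\bigr)$. Their denominators are $1-xu_0$ and $1-xu$ respectively, and these coincide when $u=u_0$. Hence $\lambda(u_0)=\omega(u_0)=u_0$.

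The only point needing care is the formal justification that substitution commutes with the ring operations and that $1-xu_0$ is invertible. Both are routine, and there is no real obstacle.
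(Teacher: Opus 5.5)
Your proposal is correct and matches the paper's proof: expand $(1-xu)\bigl(u-\omega(u)\bigr)$ to recover $K(u)$, use $K(u_0)=0$ with the invertibility of $1-xu_0$ to get $\omega(u_0)=u_0$, and note that $\lambda(u_0)=\omega(u_0)$ because their denominators agree at $u=u_0$. The appeal to $\mathbb Q[y][[x]]$ being an integral domain is harmless but unnecessary, since multiplying by the inverse of the unit $1-xu_0$ already suffices.
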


\begin{proof}
We have
$(1-xu)\bigl(u-\omega(u)\bigr)=u-xu^2-x+(1-y)x^2u=K(u)$.
Since $1-xu_0$ is invertible and $K(u_0)=0$, we get $u_0=\omega(u_0)$.
Moreover $\lambda(u_0)=x\bigl(1-(1-y)xu_0\bigr)/(1-xu_0)=\omega(u_0)=u_0$.
\end{proof}

We point out that formally we can write $K(u)=-x(u-u_0)(u-u_0^{-1}).$

\subsubsection{Words over $\{U,D\}$ and their weights}

Given any word $w$ and a letter $L,$ denote by $\#L(w)$ the number of occurrences of $L$ in $w.$

Let $w=s_1\cdots s_\ell$ be a word over $\{U,D\}$ and let $a\ge0$.
We regard $w$ as a lattice path starting at a certain height $a$; its
\emph{final height} is $e_a(w)=a+\#U(w)-\#D(w)$, and we say that $w$
is \emph{nonnegative from $a$} if it starts at height $a$ and if every prefix of $w$ ends at a
nonnegative height, namely if the path never goes below the $x$-axis.
We define the \emph{weight} of $w$ as
\[
\operatorname{wt}(w)=\#\{\text{factors } UD \text{ of } w\}
+\chi(w\text{ starts with }D)+\chi(w\text{ ends with }U),
\]
where $\chi(P)$ equals $1$ if the property $P$ holds and $0$ otherwise.
The relevance of this definition is explained by the following lemma.

\begin{lem}\label{lem:weight-segments}
Let $m$ be a Motzkin path and write
\[
m=w_0\,H\,w_1\,H\cdots H\,w_k,\qquad k\ge0,
\]
where $w_0,\dots,w_k$ are (possibly empty) words over $\{U,D\}$.
Then
\[
\wep(m)=\sum_{i=0}^{k}\operatorname{wt}(w_i).
\]
\end{lem}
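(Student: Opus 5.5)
The plan is to classify each weak peak of $m$ by its position relative to the horizontal steps, and to show that the weak peaks of $m$ are in bijection with the disjoint union, over $i$, of the contributions counted by $\operatorname{wt}(w_i)$. By definition, a weak peak is a pair of consecutive steps $s_js_{j+1}$ equal to $UD$, $UH$ or $HD$. Each pair of consecutive steps of $m$ either lies inside a single segment $w_i$, or involves at least one $H$ step. Consecutive $H$ steps, which occur when some $w_i$ is empty, give the pair $HH$, and this is never a weak peak. So the weak peaks split into three disjoint kinds.

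\textbf{Kind 1: factors $UD$.} These lie entirely inside some $w_i$, since they contain no $H$. Conversely, every factor $UD$ of some $w_i$ is a factor of $m$. So their number is $\sum_i \#\{\text{factors } UD \text{ of } w_i\}$.

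\textbf{Kind 2: factors $UH$.} Such a factor consists of the last step of some $w_i$ with $i<k$, which must be a $U$ (so $w_i$ is nonempty and ends with $U$), followed by the $H$ that comes after $w_i$. For $i=k$ there is no following $H$. However, $w_k$ is the final segment of a Motzkin path, which ends at height $0$, so $w_k$ cannot end with $U$; its term $\chi(w_k\text{ ends with }U)$ is therefore $0$ and nothing is lost.

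\textbf{Kind 3: factors $HD$.} Symmetrically, such a factor consists of an $H$ followed by the first step of $w_i$ with $i\ge1$, which must be a $D$. For $i=0$, the segment $w_0$ starts at height $0$, so it cannot start with $D$, and its term $\chi(w_0\text{ starts with }D)$ vanishes.

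Summing the three counts gives exactly $\sum_i \operatorname{wt}(w_i)$. The only delicate point is the boundary cases just handled. It is also worth checking the degenerate case $k=0$, where $m=w_0$ is a Dyck path: then $\operatorname{wt}(w_0)$ reduces to the number of $UD$ factors, because $w_0$ neither starts with $D$ nor ends with $U$. This matches $\wep(m)$. Apart from these boundary checks, no real obstacle is expected.
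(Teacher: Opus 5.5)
Your proof is correct and follows essentially the same argument as the paper: length-two factors inside a segment contribute only through $UD$, factors involving a horizontal step give $UH$ exactly when the preceding segment ends with $U$, $HD$ exactly when the following segment starts with $D$, and $HH$ never counts. You also use the same boundary observation that $w_0$ cannot start with $D$ and $w_k$ cannot end with $U$; the only cosmetic difference is that you sort weak peaks by type ($UD$, $UH$, $HD$) rather than by the position of the factor.
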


\begin{proof}
A weak peak of $m$ is a factor of length two of $m$ equal to $UD$, $UH$
or $HD$. A factor of length two of $m$ either lies inside some $w_i$, or
contains at least one of the displayed horizontal steps. In the first
case it is a weak peak if and only if it is a factor $UD$ of $w_i$.
In the second case it is either $HH$ (when some $w_i$, $1\le i\le k-1$,
is empty), or it consists of the last step of a nonempty $w_{i-1}$
followed by $H$, or of $H$ followed by the first step of a nonempty
$w_i$. The factor $HH$ is not a weak peak; the factor given by the last step of
$w_{i-1}$ and the subsequent $H$ is a weak peak if and only if $w_{i-1}$ ends with $U$;
the factor given by $H$ followed by the first step of $w_i$ is a weak peak if and only if
$w_i$ starts with $D$. Hence
\[
\wep(m)=\sum_{i=0}^k \#\{UD\text{ in }w_i\}
+\sum_{i=0}^{k-1}\chi(w_i\text{ ends with }U)
+\sum_{i=1}^{k}\chi(w_i\text{ starts with }D).
\]
Finally, $w_0$ starts at height $0$, so it cannot start with $D$, and
$w_k$ ends at height $0$, so it cannot end with $U$. Therefore the two last sums can be extended
to $0\le i\le k$, and the claim follows.
\end{proof}

For every $a\ge0$ we introduce the following generating functions:
\begin{align*}
W_a(u)&=\sum_{w}x^{|w|}\,y^{\operatorname{wt}(w)}\,u^{e_a(w)},
&&w \text{ nonnegative from } a,\\
S_a(u)&=\sum_{w}x^{|w|}\,y^{\operatorname{wt}(w)}\,u^{e_a(w)},
&&w \text{ nonnegative from } a \text{ with } \#D(w)\ge a,\\
C_a(u)&=\sum_{w}x^{|w|}\,y^{\operatorname{wt}(w)}\,u^{e_a(w)},
&&w \text{ starts at height }a\text{ and } \#D(w)< a.
\end{align*}
We call the words counted by $S_a$ \emph{good} and the words counted by
$C_a$ \emph{bad}. Notice that $C_0=0$ and $S_0=W_0$. The following
simple observation is the key to the whole computation.

\begin{lem}\label{lem:good-bad}
For every $a\ge1$ we have $S_a(u)=W_a(u)-C_a(u)$.
\end{lem}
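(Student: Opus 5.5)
The identity $S_a=W_a-C_a$ will follow from showing that, for $a\ge1$, the words counted by $W_a$ split into the good words and the bad words, with nothing left over and no overlap. The three series use the same weight $x^{|w|}y^{\operatorname{wt}(w)}u^{e_a(w)}$. So it suffices to prove that every bad word is automatically nonnegative from $a$. Given that, the set of words nonnegative from $a$ is the disjoint union of those with $\#D(w)\ge a$ (the good words) and those with $\#D(w)<a$ (the bad words), and summing the common weight over each part gives the claim.

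The key step is the following. Let $w$ start at height $a$ with $\#D(w)<a$. Every prefix $p$ of $w$ ends at height
\[
a+\#U(p)-\#D(p)\ge a-\#D(p)\ge a-\#D(w)>0 .
\]
Hence $w$ never goes below the $x$-axis; in fact it stays strictly above it. So every bad word is nonnegative from $a$ and is counted in $W_a$.

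Disjointness and exhaustiveness are then immediate, since the conditions $\#D(w)\ge a$ and $\#D(w)<a$ are complementary. The only point needing any care is that the definition of $C_a$ does not explicitly require nonnegativity, and the height bound above handles exactly that. One should also note that $a\ge1$ is not actually used in the argument. For $a=0$ the set of bad words is empty, consistent with $C_0=0$ and $S_0=W_0$, so the identity holds trivially there as well.
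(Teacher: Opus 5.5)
Your proposal is correct and follows the paper's own argument. In both, every prefix of a word with fewer than $a$ down steps ends at height at least $a-\#D(w)\ge 1$, so bad words are automatically nonnegative from $a$, and the words nonnegative from $a$ split into good and bad ones. Your side remark that the identity also holds trivially for $a=0$ agrees with the paper's conventions $C_0=0$ and $S_0=W_0$.
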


\begin{proof}
Let $w$ be a word with $d<a$ down steps. Every prefix of $w$ contains
at most $d$ down steps, hence it ends at height at least
$a-d\ge1$. Therefore every bad word is nonnegative from $a$, and the
words nonnegative from $a$ split into good words and bad words.
\end{proof}

\subsubsection{Nonnegative words}

\begin{lem}\label{lem:Wa}
For every $a\ge1$ we have
\[
W_a(u)=\theta(u)\Bigl(u^a+R_a(u)\Bigr),
\qquad\text{where}\qquad
R_a(u)=\frac{xy}{K(u)}\left(u^a-\frac{(1-xu)\,u_0^{\,a}}{1-xu_0}\right).
\]
Moreover,
\[
W_0(u)=1-y+\frac{y}{K(u)}\bigl(u-\lambda(u)\bigr).
\]
\end{lem}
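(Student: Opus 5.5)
The plan is to apply the kernel method to a step-by-step construction of the words, keeping track of whether the last letter is $U$. Fix $a\ge1$. Split the words counted by $W_a$ into two classes. Let $P(u)$ count the nonempty words ending with $U$, and let $Q(u)$ count the empty word together with the words ending with $D$. In both series every word is weighted by $x^{|w|}u^{e_a(w)}$ times $y$ raised to its number of $UD$ factors plus $\chi(w\text{ starts with }D)$. The contribution $\chi(w\text{ ends with }U)$ is left out for now. Since that contribution is exactly one factor $y$ for each word in $P$, we get $W_a=Q+yP$.

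Appending a $U$ to any word is always allowed and creates no new weight, so $P=xu(P+Q)$, that is, $P=xuQ/(1-xu)$. It follows that $W_a=\bigl(1+\tfrac{yxu}{1-xu}\bigr)Q=\theta(u)\,Q$, which already produces the factor $\theta(u)$ in the statement. Appending a $D$ multiplies by $x/u$. It contributes a factor $y$ in two cases: when the previous letter is $U$ (a new $UD$ factor), and when the word is empty (the word then starts with $D$). It contributes no $y$ when the previous letter is $D$. Appending $D$ is forbidden at height $0$, so we must remove the corresponding terms. This gives
\[
Q=u^a+\frac{x}{u}\Bigl(yP+(Q-u^a)+yu^a\Bigr)-\frac{x}{u}\,c,
\]
where $c=c(x,y)$ is the constant term in $u$ of the bracket. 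Because $a\ge1$, the term $yu^a$ does not contribute to $c$.

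We now substitute $P=xuQ/(1-xu)$ and multiply through by $u(1-xu)$. A direct check shows that the coefficient of $Q$ becomes $u-xu^2-x+(1-y)x^2u$, which is exactly $K(u)$. The functional equation therefore reads
\[
K(u)\,Q(u)=(1-xu)\bigl(u^{a+1}-(1-y)xu^{a}-xc\bigr).
\]
The algebraic identity $(1-xu)\bigl(u-(1-y)x\bigr)-K(u)=xy$ then gives
\[
Q=u^a+\frac{xy\,u^a-x(1-xu)c}{K(u)}.
\]
To find $c$, substitute $u=u_0$. This substitution is legitimate because $Q\in\mathbb Q[y,u][[x]]$ and $u_0$ has zero constant term. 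The left side vanishes, and we obtain $xc=xy\,u_0^a/(1-xu_0)$. Inserting this value gives $Q=u^a+R_a(u)$, and hence $W_a=\theta(u)\bigl(u^a+R_a(u)\bigr)$.

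For $a=0$ the same scheme applies, with two differences. The empty word is the only word of height $0$ that cannot be followed by $D$, and the first letter is necessarily $U$. Redoing the computation with these changes, again fixing the single unknown constant through $u=u_0$ and using $\lambda(u_0)=u_0$ from Lemma~\ref{lem:kernel-identities}, should give $W_0=1-y+\frac{y}{K(u)}\bigl(u-\lambda(u)\bigr)$. I expect the main obstacle to be the bookkeeping of the boundary contributions: the initial $D$ when $a\ge1$, the forbidden $D$ at height $0$, and the different behaviour of the empty word when $a=0$. One has to make sure that the constant $c$ contains exactly the right terms, so that the $y$-weights of the first and last steps are neither lost nor double counted.
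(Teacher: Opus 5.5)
Your treatment of the case $a\ge1$ is correct and is exactly the paper's argument. Your $P$ and $Q$ are the paper's $R^U_a$ and $u^a+R^D_a$, and your constant $c$ is $R^D_a(0)$. You arrive at the same kernel equation and determine $c$ by the same evaluation at $u_0$.

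The gap is the second formula of the lemma. For $a=0$ you only assert that redoing the computation ``should give'' $W_0=1-y+\frac{y}{K(u)}(u-\lambda(u))$. The stated closed form is not the shape your scheme directly produces, so a further manipulation is required and you do not supply it.

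Your description of what changes for $a=0$ is also off. It is not true that the empty word is the only word of height $0$ that cannot be followed by $D$ (think of $UD$); the point is that for $a=0$ the empty word also ends at height $0$. In fact no separate treatment is needed. With $c$ defined as the constant term of the whole bracket, your equation remains valid for $a=0$: the term $yu^0=y$ is now constant and is removed together with $R^D_0(0)$, which is exactly the prohibition of an initial $D$. The same kernel computation then gives $c=y/(1-xu_0)$ and $W_0=\theta(u)\bigl(1+R_0(u)\bigr)$, with $R_0(u)=\frac{x^2y\,(u-u_0)}{K(u)(1-xu_0)}$. This is your formula for $R_a$ at $a=0$, and it agrees with the paper's $R^D_0$.

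What is still missing is the conversion to the stated form. The identity you need is not $\lambda(u_0)=u_0$, which merely says that the final expression has no pole at $u_0$. It is the factorization $K(u)=(1-xu)(u-\omega(u))$ of Lemma~\ref{lem:kernel-identities}. Write $\theta(u)=1-y+\frac{y}{1-xu}=1-y+\frac{y(u-\omega(u))}{K(u)}$ and use $x\theta(u)=\omega(u)$, so that
\[
W_0(u)-(1-y)=\frac{y}{K(u)}\left(u-\omega(u)+\frac{x\,\omega(u)(u-u_0)}{1-xu_0}\right)=\frac{y}{K(u)}\left(u-\frac{\omega(u)(1-xu)}{1-xu_0}\right).
\]
Conclude with $\omega(u)(1-xu)=x\bigl(1-(1-y)xu\bigr)$, which turns the last fraction into $\lambda(u)$.
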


\begin{proof}
Fix $a\ge0$. Let $R^U_a(u)$ (respectively, $R^D_a(u)$) be the
generating function of the nonempty words nonnegative from $a$ ending
with $U$ (respectively, with $D$), where each word $w$ is weighted by
\[
x^{|w|}\,y^{\#\{UD\text{ in }w\}+\chi(w\text{ starts with }D)}\,u^{e_a(w)},
\]
namely without the contribution $\chi(w\text{ ends with } U)$. Since
the empty word contributes $u^a$ and a word ending with $U$ gets an
extra factor $y$ in $W_a$, we have
\begin{equation}\label{eq:W-in-terms-of-R}
W_a(u)=u^a+y\,R^U_a(u)+R^D_a(u).
\end{equation}
Every nonempty word $w$ can be uniquely written as $w=w's$, where $s$
is its last step and $w'$ is nonnegative from $a$.

\smallskip\noindent
\emph{Appending $U$.} A step $U$ can be appended to every word $w'$,
it does not create any factor $UD$, and it does not change whether the
word starts with $D$ (if $w'$ is empty, the new word starts with $U$).
It multiplies the weight by $xu$. Hence
\begin{equation}\label{eq:RU}
R^U_a(u)=xu\bigl(u^a+R^U_a(u)+R^D_a(u)\bigr).
\end{equation}

\smallskip\noindent
\emph{Appending $D$.} A step $D$ can be appended to $w'$ if and only if
$e_a(w')\ge1$, and it multiplies the weight by $x/u$. It gives an
additional factor $y$ if $w'$ is empty (the new word starts with $D$)
or if $w'$ ends with $U$ (a new factor $UD$ is created), while it gives
no additional factor if $w'$ ends with $D$.

Assume first $a\ge1$. The words $w'$ ending at height $0$ are neither
the empty word (which ends at height $a\ge1$) nor words ending with
$U$ (which end at a positive height); hence they are exactly the words
counted by $R^D_a(0)$. We obtain
\begin{equation}\label{eq:RD}
R^D_a(u)=\frac{x}{u}\Bigl(y\,u^a+y\,R^U_a(u)+R^D_a(u)-R^D_a(0)\Bigr), \quad \text{for }a\geq1.
\end{equation}
From \eqref{eq:RU} we get $(1-xu)R^U_a=xu\,(u^a+R^D_a)$, hence
\[
y\,u^a+y\,R^U_a=\frac{y\,u^a+yxu\,R^D_a}{1-xu}.
\]
Substituting into \eqref{eq:RD} and multiplying by $u(1-xu)$ we get
\[
u(1-xu)R^D_a=xy\,u^a+x^2yu\,R^D_a+x(1-xu)R^D_a-x(1-xu)R^D_a(0),
\]
that is,
\begin{equation}\label{eq:kernel-RD}
K(u)\,R^D_a(u)=xy\,u^a-x(1-xu)\,R^D_a(0),
\end{equation}
since $u(1-xu)-x(1-xu)-x^2yu=K(u)$. 

Here we use the so called \textit{kernel method} (see e.g.~\cite{Prodinger2003}).

Identity \eqref{eq:kernel-RD}
holds in $\mathbb Q[y,u][[x]]$, so we can replace $u$ by $u_0$. Since
$K(u_0)=0$, we get
\[
R^D_a(0)=\frac{y\,u_0^{\,a}}{1-xu_0},
\]
and \eqref{eq:kernel-RD} gives $R^D_a(u)$ as the expression denoted by $=R_a(u)$ in the statement. Finally, by
\eqref{eq:W-in-terms-of-R} and \eqref{eq:RU},
\[
W_a=u^a+\frac{yxu\,(u^a+R^D_a)}{1-xu}+R^D_a
=\Bigl(1+\frac{yxu}{1-xu}\Bigr)\bigl(u^a+R^D_a\bigr)
=\theta(u)\bigl(u^a+R_a(u)\bigr).
\]

\smallskip
Assume now $a=0$. The only difference is that the empty word ends at
height $0$, so a $D$ step cannot be appended to it. Equation
\eqref{eq:RU} is unchanged (with $u^a=1$), while \eqref{eq:RD} becomes
\[
R^D_0(u)=\frac{x}{u}\Bigl(y\,R^U_0(u)+R^D_0(u)-R^D_0(0)\Bigr).
\]
The same computation as above yields
\[
K(u)\,R^D_0(u)=x^2yu-x(1-xu)\,R^D_0(0),
\]
and replacing $u$ by $u_0$ we obtain
$R^D_0(0)=xy\,u_0/(1-xu_0)$. Hence
\[
R^D_0(u)=\frac{x^2y}{K(u)}\left(u-\frac{(1-xu)\,u_0}{1-xu_0}\right),
\qquad
W_0(u)=\theta(u)\bigl(1+R^D_0(u)\bigr).
\]
It remains to put $W_0$ in the stated form. Using $x\theta=\omega$,
$\theta(1-xu)=1-(1-y)xu$ and $\theta=1-y+y/(1-xu)$ we get
\[
W_0(u)=1-y+\frac{y}{1-xu}
+\frac{y}{K(u)}\left(xu\,\omega(u)-\frac{x^2\bigl(1-(1-y)xu\bigr)u_0}{1-xu_0}\right).
\]
By Lemma~\ref{lem:kernel-identities},
$\frac{1}{1-xu}=\frac{u-\omega(u)}{K(u)}$; moreover
$\omega(u)(1-xu)=x\bigl(1-(1-y)xu\bigr)$. Therefore
\begin{align*}
W_0(u)-(1-y)
&=\frac{y}{K(u)}\left(u-\omega(u)(1-xu)-\frac{x^2\bigl(1-(1-y)xu\bigr)u_0}{1-xu_0}\right)\\
&=\frac{y}{K(u)}\left(u-x\bigl(1-(1-y)xu\bigr)\Bigl(1+\frac{xu_0}{1-xu_0}\Bigr)\right)\\
&
=\frac{y}{K(u)}\bigl(u-\lambda(u)\bigr),
\end{align*}
as claimed.
\end{proof}

\begin{oss}
Setting $u=0$ in the previous lemma, one obtains
$W_0(0)=1+\frac{xy\,u_0}{1-xu_0}=\operatorname{Nar}(x^2,y)$, (see Subsection \ref{1432}), as expected. For
$y=1$ one has $\theta(u)=\frac{1}{1-xu}$, $K(u)=u-x-xu^2$ and
$\frac{x}{1-xu_0}=u_0$, and the lemma gives the formula
$W_a(u)=\frac{u^{a+1}-u_0^{a+1}}{u-x-xu^2}$.
\end{oss}

\subsubsection{Bad words}

\begin{lem}\label{lem:Ca}
For every $a\ge1$ we have
\[
C_a(u)=\theta(u)\,u^a+\frac{y}{K(u)}\Bigl(\omega(u)\,u^a-u\,\omega(u)^a\Bigr).
\]
\end{lem}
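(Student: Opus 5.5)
The plan is to compute $C_a(u)$ directly, by splitting bad words according to the positions of their down steps. No kernel method is needed, because a bad word is automatically nonnegative: by the proof of Lemma~\ref{lem:good-bad}, every prefix ends at height at least $a-d\ge1$. So $C_a(u)$ counts \emph{all} words over $\{U,D\}$ with $d<a$ down steps, weighted by $x^{|w|}y^{\operatorname{wt}(w)}u^{e_a(w)}$, with no positivity constraint to enforce. I would write such a word uniquely as
\[
w=U^{k_0}\,D\,U^{k_1}\,D\cdots D\,U^{k_d},\qquad k_i\ge0,\ 0\le d\le a-1,
\]
and then distribute the weight $\operatorname{wt}(w)$ among the down steps and the final block.

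The weight bookkeeping works as follows.
\begin{itemize}
\item The $j$-th down step with $j\ge2$ receives a factor $y$ exactly when $k_{j-1}\ge1$, since then it creates a factor $UD$.
\item The first down step always receives a factor $y$: if $k_0\ge1$ it creates a factor $UD$, and if $k_0=0$ the word starts with $D$.
\item The final block $U^{k_d}$ receives a factor $y$ exactly when $k_d\ge1$, since then the word ends with $U$.
\end{itemize}
These contributions exhaust $\operatorname{wt}(w)$, and they factor over the blocks. Each $U$ contributes $xu$ and each $D$ contributes $x/u$, and the starting factor is $u^a$.

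Summing over the blocks gives the following factors.
\begin{itemize}
\item The final block contributes $1+\frac{yxu}{1-xu}=\theta(u)$.
\item The first block together with the first $D$ contributes $\frac{1}{1-xu}\cdot\frac{xy}{u}$.
\item Each subsequent pair ($U$-block, $D$) contributes $\theta(u)\frac{x}{u}=\frac{\omega(u)}{u}$.
\end{itemize}
Hence the $d=0$ term is $\theta(u)u^a$, and for $1\le d\le a-1$ the term is
\[
u^a\,\theta(u)\,\frac{xy}{u(1-xu)}\Bigl(\frac{\omega(u)}{u}\Bigr)^{d-1}.
\]

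Summing this finite geometric series over $d=1,\dots,a-1$ and using $x\theta=\omega$ gives
\[
\frac{y\,\omega(u)}{1-xu}\cdot\frac{u\,(u^{a-1}-\omega(u)^{a-1})}{u-\omega(u)}.
\]
Lemma~\ref{lem:kernel-identities} gives $(1-xu)(u-\omega(u))=K(u)$, and substituting this turns the sum into $\frac{y}{K(u)}\bigl(\omega(u)u^a-u\,\omega(u)^a\bigr)$, which is the claimed formula.

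The only delicate point is the weight attribution, in particular checking that the first $D$ always carries exactly one $y$ and that no other step does. The division by $u-\omega(u)$ is formal; it is legitimate because it cancels, as the geometric sum is a polynomial in $\omega/u$.
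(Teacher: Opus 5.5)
Your proof is correct and is essentially the paper's argument. It uses the same decomposition $w=U^{k_0}DU^{k_1}D\cdots DU^{k_d}$ and the same weight bookkeeping; you attach the factors $y$ to the down steps rather than to the preceding $U$-blocks, but this gives the same contribution $\frac{y}{1-xu}\,u^a\bigl(\omega(u)/u\bigr)^d$ for each $d\ge1$. The finite geometric sum is then closed in the same way, via $(1-xu)\bigl(u-\omega(u)\bigr)=K(u)$ from Lemma~\ref{lem:kernel-identities}.
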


\begin{proof}
A word with exactly $d$ down steps can be uniquely written as
\[
w=U^{m_0}\,D\,U^{m_1}\,D\cdots D\,U^{m_d},\qquad m_0,\dots,m_d\ge0.
\]
Its factors $UD$ correspond to the indices $0\le i\le d-1$ with
$m_i>0$; moreover, if $d\ge1$, then $w$ starts with $D$ if and only if
$m_0=0$, and $w$ ends with $U$ if and only if $m_d>0$.

If $d=0$, then $w=U^{m_0}$ and $\operatorname{wt}(w)=\chi(m_0>0)$.
These words contribute
\[
u^a\sum_{m\ge0}y^{\chi(m>0)}(xu)^m=u^a\Bigl(1+\frac{yxu}{1-xu}\Bigr)=\theta(u)\,u^a.
\]

If $d\ge1$, the weight of $w$ is the sum of the following
contributions:
\begin{itemize}
\item the block $U^{m_0}$ always contributes exactly $1$ to the count of weak peaks: if $m_0=0$
the word starts with $D$, while if $m_0>0$ the block is followed by
the factor $UD$;
\item each block $U^{m_i}$ with $1\le i\le d-1$ contributes $1$ if and
only if $m_i>0$, since in this case it is followed by the factor $UD$;
\item the block $U^{m_d}$ contributes $1$ if and only if $m_d>0$, since
in this case the word ends with $U$.
\end{itemize}
Since each $U$ step multiplies the weight by $xu$, each $D$ step by
$x/u$, and the starting height gives the factor $u^a$, the words with
$d\ge1$ down steps contribute
\[
u^a\cdot\frac{y}{1-xu}\cdot\theta(u)^d\cdot\Bigl(\frac{x}{u}\Bigr)^d
=\frac{y}{1-xu}\,u^a q^d,
\qquad\text{where } q=\frac{\omega(u)}{u}.
\]
Summing over $0\le d\le a-1$ we obtain
\[
C_a(u)=\theta(u)\,u^a+\frac{y}{1-xu}\sum_{d=1}^{a-1}u^a q^d .
\]
Now $(1-q)\sum_{d=1}^{a-1}q^d=q-q^a$, so that
\[
(1-q)\sum_{d=1}^{a-1}u^aq^d=u^{a-1}\omega(u)-\omega(u)^a .
\]
By Lemma~\ref{lem:kernel-identities},
$1-q=\frac{u-\omega(u)}{u}=\frac{K(u)}{u(1-xu)}$; hence
\[
\frac{y}{1-xu}\sum_{d=1}^{a-1}u^aq^d
=\frac{y}{1-xu}\cdot\frac{u(1-xu)}{K(u)}\bigl(u^{a-1}\omega(u)-\omega(u)^a\bigr)
=\frac{y}{K(u)}\bigl(\omega(u)u^a-u\,\omega(u)^a\bigr),
\]
and the claim follows. (For $a=1$ the sum is empty and the formula
correctly gives $C_1(u)=\theta(u)\,u$.)
\end{proof}

\subsubsection{Good words}

\begin{prop}\label{prop:Sa}
For every $a\ge0$ we have
\[
S_a(u)=\frac{y}{K(u)}\Bigl(u\,\omega(u)^a-\lambda(u)\,u_0^{\,a}\Bigr)+(1-y)\,\chi(a=0).
\]
\end{prop}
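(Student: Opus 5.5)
The plan is to split according to whether $a=0$ or $a\ge1$, and in the second case to use Lemma~\ref{lem:good-bad} together with the closed forms of Lemma~\ref{lem:Wa} and Lemma~\ref{lem:Ca}.

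\emph{Case $a=0$.} We have $S_0=W_0$ and $\omega(u)^0=u_0^0=1$. The claimed formula therefore reads
\[
S_0(u)=1-y+\frac{y}{K(u)}\bigl(u-\lambda(u)\bigr),
\]
which is exactly the expression for $W_0(u)$ in Lemma~\ref{lem:Wa}. Nothing more needs to be done here.

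\emph{Case $a\ge1$.} We write $S_a=W_a-C_a$. Since $W_a=\theta u^a+\theta R_a$ and $C_a=\theta u^a+\frac{y}{K}(\omega u^a-u\omega^a)$, the terms $\theta(u)u^a$ cancel. Using $x\theta=\omega$, we get
\[
\theta R_a=\frac{y}{K}\Bigl(\omega u^a-\frac{\omega(1-xu)u_0^a}{1-xu_0}\Bigr).
\]
Hence
\[
S_a=\frac{y}{K}\Bigl(u\,\omega^a-\frac{\omega(u)(1-xu)}{1-xu_0}u_0^a\Bigr).
\]
It remains to identify the coefficient of $u_0^a$. From the definition of $\omega$ we have $\omega(u)(1-xu)=x(1-(1-y)xu)$, so this coefficient is $x(1-(1-y)xu)/(1-xu_0)=\lambda(u)$. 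This gives the stated formula, with the indicator term vanishing because $a\ge1$.

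The only delicate point is bookkeeping: one must make sure the $\omega u^a$ terms coming from $\theta R_a$ and from $C_a$ carry the same sign, so that they cancel exactly. I would also check the result against a small instance. For $a=1$, the good words are those with at least one $D$, and a quick series check at low order confirms that $S_1=W_1-\theta u$.
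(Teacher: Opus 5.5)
Your proposal is correct and follows the paper's proof essentially step for step. The case $a=0$ is the second part of Lemma~\ref{lem:Wa}. For $a\ge1$ you write $S_a=W_a-C_a$ via Lemma~\ref{lem:good-bad}, cancel the terms $\theta(u)u^a$ and $\frac{y}{K(u)}\omega(u)u^a$, and identify the surviving coefficient of $u_0^{\,a}$ as $\lambda(u)$ using $\omega(u)(1-xu)=x\bigl(1-(1-y)xu\bigr)$.
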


\begin{proof}
For $a=0$ we have $S_0=W_0$, and the claim is the second part of
Lemma~\ref{lem:Wa}. Let $a\ge1$. By Lemmas~\ref{lem:good-bad},
\ref{lem:Wa} and~\ref{lem:Ca},
\[
S_a(u)=\theta(u)\,R_a(u)-\frac{y}{K(u)}\Bigl(\omega(u)\,u^a-u\,\omega(u)^a\Bigr).
\]
Since $x\theta(u)=\omega(u)$ and $\theta(u)(1-xu)=1-(1-y)xu$, we have
\[
\theta(u)\,R_a(u)=\frac{y}{K(u)}\left(\omega(u)\,u^a
-\frac{x\bigl(1-(1-y)xu\bigr)}{1-xu_0}\,u_0^{\,a}\right)
=\frac{y}{K(u)}\Bigl(\omega(u)\,u^a-\lambda(u)\,u_0^{\,a}\Bigr),
\]
and the terms $\omega(u)u^a$ cancel.
\end{proof}

The crucial feature of Proposition~\ref{prop:Sa} is that the starting
height $a$ appears only as an exponent. As a consequence, appending a
good word to a family of paths amounts to a substitution, as the next
corollary shows.

\begin{cor}\label{cor:operator}
Let $f(u)=\sum_{a\ge0}f_a\,u^a\in\mathbb Q[y,u][[x]]$, with
$f_a\in\mathbb Q[y][[x]].$
% and assume that the valuation of $f_a$ tends
% to infinity as $a\to\infty$.
Then
\[
\sum_{a\ge0}f_a\,S_a(u)=\frac{y}{K(u)}\Bigl(u\,f\bigl(\omega(u)\bigr)-\lambda(u)\,f(u_0)\Bigr)+(1-y)\,f(0).
\]
\end{cor}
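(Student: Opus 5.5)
The plan is to apply Proposition~\ref{prop:Sa} termwise and then sum, using linearity. Write $f(u)=\sum_{a\ge0}f_a u^a$. For each $a$ multiply the identity of Proposition~\ref{prop:Sa} by $f_a$, obtaining
\[
f_a S_a(u)=\frac{y}{K(u)}\Bigl(u\,f_a\,\omega(u)^a-\lambda(u)\,f_a\,u_0^{\,a}\Bigr)+(1-y)\,f_a\,\chi(a=0).
\]
Summing over $a\ge0$, the three pieces become $u\sum_a f_a\omega(u)^a=u\,f(\omega(u))$, $\lambda(u)\sum_a f_a u_0^a=\lambda(u)f(u_0)$, and $(1-y)f_0=(1-y)f(0)$. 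This gives the claimed formula once the interchange of summation is justified.

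The only real issue is making sense of the infinite sums, which I will handle in the ring $\mathbb Q[y,u][[x]]$, or in its localisation where $K(u)$ is inverted. First, I will check that $u_0$ and $\omega(u)$ have zero constant term in $x$. This follows from $u_0=x+O(x^3)$ and $\omega(u)=x(1-(1-y)xu)/(1-xu)$. Hence $f(u_0)$ and $f(\omega(u))$ are well-defined substitutions, as recalled at the start of the subsection.

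Second, I will argue that $\sum_a f_a S_a(u)$ converges $x$-adically. Every word counted by $S_a$ has at least $a$ down steps, so $S_a(u)$ has $x$-valuation at least $a$. The same holds for each of the terms $u\,\omega(u)^a$ and $\lambda(u)u_0^a$, since $\omega$ and $u_0$ have valuation at least one.

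To avoid dividing by $K(u)$ inside an infinite sum, I will multiply the identity of Proposition~\ref{prop:Sa} through by $K(u)$ first, obtaining the polynomial-type identity
\[
K(u)S_a(u)=y\bigl(u\,\omega(u)^a-\lambda(u)u_0^a\bigr)+(1-y)K(u)\chi(a=0).
\]
I will then sum this with coefficients $f_a$; every series involved converges termwise in $x$, with coefficients that are polynomials in $u$ or rational with denominator a power of $1-xu$. Finally I will divide by $K(u)=u-x+\dots$, which is legitimate in the appropriate ring of fractions.

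The main obstacle is precisely this formal bookkeeping: checking that the coefficient of each power of $x$ receives contributions from only finitely many $a$. This is where the valuation bounds on $\omega(u)^a$, $u_0^a$ and $S_a$ are used. The algebra itself is immediate.
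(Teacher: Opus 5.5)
Your proposal is correct and takes the same approach as the paper, which also just applies Proposition~\ref{prop:Sa} termwise and sums by linearity, using $\sum_a f_a\,\omega(u)^a=f(\omega(u))$, $\sum_a f_a\,u_0^{\,a}=f(u_0)$ and $f_0=f(0)$. Your additional bookkeeping is a sound elaboration of what the paper leaves implicit: the $x$-valuations of $S_a$, $\omega(u)^a$ and $u_0^{\,a}$ are at least $a$, and you clear $K(u)$ before summing and divide only at the end.
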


\begin{proof}
% The series on the left-hand side converges, since $S_a(u)$ has
% nonnegative valuation.
The claim follows from
Proposition~\ref{prop:Sa} by linearity, since
$\sum_a f_a\,\omega(u)^a=f(\omega(u))$,
$\sum_a f_a\,u_0^{\,a}=f(u_0)$ and $f_0=f(0)$.
\end{proof}

\subsubsection{Prefixes ending with a horizontal step}

Let $\mathcal P$ be the set of nonempty words $p$ over $\{U,H,D\}$ such
that
\begin{itemize}
\item[(i)] $p$ ends with $H$;
\item[(ii)] every prefix of $p$ ends at a nonnegative height;
\item[(iii)] for every pair of consecutive horizontal steps of $p$,
the number of down steps lying between them is at least the height of
the first of the two.
\end{itemize}
For $p\in\mathcal P$ we denote by $h(p)$ its final height and by
$\wep(p)$ the number of its factors equal to $UD$, $UH$ or $HD$, and
we set
\[
Q(u)=1+\sum_{p\in\mathcal P}x^{|p|}\,y^{\wep(p)}\,u^{h(p)}\in\mathbb Q[y,u][[x]].
\]

\begin{lem}\label{lem:F-from-Q}
We have
\[
F_T(x,y)=\frac{Q(0)-1}{x}.
\]
\end{lem}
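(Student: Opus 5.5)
The plan is to set up an explicit bijection between admissible Motzkin paths and the words of $\mathcal P$ of final height $0$, and then read off the weights. Given an admissible path $m\in\mathcal A$, we append one horizontal step and form $p=mH$.

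First we check that $p\in\mathcal P$. Conditions (i) and (ii) are immediate, since $m$ is a Motzkin path and $p$ ends with $H$. For condition (iii), the pairs of consecutive horizontal steps of $p$ are of two kinds: the pairs already present in $m$, and possibly the pair formed by the last $H$ of $m$ and the appended $H$. The first kind satisfy (iii) by admissibility. For the new pair, suppose the last $H$ of $m$ sits at height $h$. Every step after it is $U$ or $D$, and the path returns to height $0$. Hence the number of down steps between the two $H$'s equals $h$ plus the number of up steps, which is at least $h$. So (iii) holds and $p\in\mathcal P$ with $h(p)=0$. If $m$ has no horizontal step, the new $H$ has no predecessor and there is nothing to check.

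Conversely, take $p\in\mathcal P$ with $h(p)=0$ and delete its final $H$. The result $m$ is a word that never goes below the axis and ends at height $0$, so it is a Motzkin path, possibly empty. Condition (iii) restricted to $m$ is exactly Definition~\ref{def:admissible}, so $m\in\mathcal A$. The two maps are clearly inverse to each other, so $m\mapsto mH$ is a bijection from $\mathcal A$ onto $\{p\in\mathcal P: h(p)=0\}$. The empty path corresponds to $p=H$.

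Next we compare weights. The length increases by one, which accounts for the factor $x$. For weak peaks, the only new length-two factor is (last step of $m$)$H$. Since $m$ ends at height $0$, its last step is $D$ or $H$, or $m$ is empty, so the new factor is never $UH$. Hence $\wep(mH)=\wep(m)$.

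Finally, $Q(0)$ extracts the terms with $h(p)=0$, together with the constant $1$. Therefore $Q(0)-1=\sum_{m\in\mathcal A}x^{|m|+1}y^{\wep(m)}=xF_T(x,y)$, and by Theorem~\ref{thm:4231} together with Lemma~\ref{discese_picchi_deboli} this gives the claimed formula.

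The only step requiring care is the check of condition (iii) for the new final pair. It reduces to the observation that the final $\{U,D\}$ segment of $m$ must descend from height $h$ to $0$, so it contains at least $h$ down steps. Everything else is routine bookkeeping.
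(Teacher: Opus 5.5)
Your proposal is correct and follows the paper's own proof essentially step for step. Both use the bijection $m\mapsto mH$ from $\mathcal A$ onto $\{p\in\mathcal P: h(p)=0\}$. Both check condition (iii) only for the new final pair, using the down steps of the trailing $\{U,D\}$ segment. Both note that the new final factor is $DH$ or $HH$, so no weak peak is created.
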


\begin{proof}
We show that the map $m\mapsto mH$ is a bijection between the set  $\mathcal A$ of the admissible paths 
and the set of words $p\in\mathcal P$ with $h(p)=0$, which increases
the length by one and preserves the number of weak peaks.

Let $m\in\mathcal A$. Conditions (i) and (ii) are clear for $mH$.
As for (iii), we only need to check the pair formed by the last
horizontal step of $m$, say at height $h$, and the final $H$. The steps
between them form a word over $\{U,D\}$ going from height $h$ to height
$0$, hence containing at least $h$ down steps. Moreover, the last step
of $m$ (if any) is either $D$ or $H$, so the final factor of $mH$ is
$DH$ or $HH$, which is not a weak peak. Conversely, if $p\in\mathcal P$
and $h(p)=0$, then removing the final $H$ we obtain a Motzkin path,
which is admissible by (iii).
\end{proof}

\begin{prop}\label{prop:prefix-decomposition}
Every $p\in\mathcal P$ can be uniquely written in one of the following
two forms:
\begin{itemize}
\item[(a)] $p=w\,H$, where $w$ is a word over $\{U,D\}$ nonnegative
from $0$;
\item[(b)] $p=p'\,w\,H$, where $p'\in\mathcal P$ and $w$ is a good word
from $a=h(p')$, namely a word over $\{U,D\}$ nonnegative from $a$ with
$\#D(w)\ge a$.
\end{itemize}
Conversely, every word of these forms belongs to $\mathcal P$. Moreover,
$h(p)=e_0(w)$ and $\wep(p)=\operatorname{wt}(w)$ in case (a), while
$h(p)=e_{h(p')}(w)$ and $\wep(p)=\wep(p')+\operatorname{wt}(w)$ in
case (b).
\end{prop}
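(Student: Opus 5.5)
The natural approach is to cut $p$ at its last two horizontal steps. Given $p\in\mathcal P$, its final step is $H$ by (i). We look for another $H$ before it. If there is none, then $p=wH$ with $w$ a word over $\{U,D\}$. Otherwise let $p'$ be the prefix of $p$ ending at the second-to-last horizontal step, so $p=p'wH$ with $w$ a word over $\{U,D\}$. Uniqueness is immediate: the two cases are distinguished by the number of $H$ steps, and in case (b) the cut point is forced. So the work lies in showing that the pieces have the stated properties, that the converse holds, and that the statistics behave as claimed.

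\textbf{Case (a).} Condition (ii) says exactly that $w$ is nonnegative from $0$. Condition (iii) is vacuous, since there is only one $H$. Conversely, any $w$ nonnegative from $0$ gives $wH\in\mathcal P$.

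\textbf{Case (b).}
\begin{itemize}
\item $p'$ ends with $H$. Its prefixes are prefixes of $p$, so (ii) holds for $p'$. Every pair of consecutive $H$ steps of $p'$ is also consecutive in $p$, so (iii) holds for $p'$. Hence $p'\in\mathcal P$.
\item The steps of $w$ are traversed starting at height $h(p')$, so (ii) for $p$ is equivalent to $w$ being nonnegative from $a=h(p')$.
\item The only new pair of consecutive horizontal steps in $p$ is the last $H$ of $p'$, at height $a$, together with the final $H$. The steps between them are exactly those of $w$, so (iii) for this pair reads $\#D(w)\ge a$.
\end{itemize}
Thus $w$ is good from $a$. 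The converse follows by reversing the same three equivalences. The heights are clear: the final $H$ does not change the height, giving $e_0(w)$ and $e_{h(p')}(w)$ respectively.

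For the weak peaks we use Lemma~\ref{lem:weight-segments}, in the same local way as in its proof. The factors of length two of $p$ are of three kinds.
\begin{itemize}
\item Factors inside $p'$: these count $\wep(p')$.
\item Factors inside $w$: the weak peaks among them are the factors $UD$ of $w$.
\item The junction $H$ followed by the first step of $w$: this is a weak peak iff $w$ starts with $D$. In case (a) there is no preceding $H$, but there $w$ is nonnegative from $0$ and cannot start with $D$, so the term is harmless.
\item The junction of the last step of $w$ with the final $H$: this is a weak peak iff $w$ ends with $U$.
\item If $w$ is empty, the junction is $HH$, which is not a weak peak. This agrees with $\operatorname{wt}(\varepsilon)=0$.
\end{itemize}
Summing gives $\wep(p)=\wep(p')+\operatorname{wt}(w)$ in case (b) and $\wep(p)=\operatorname{wt}(w)$ in case (a).

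The only step requiring real care is this weight bookkeeping at the boundaries, especially the edge cases where $w$ is empty or $p'$ is followed directly by $H$. Everything else is a direct translation of conditions (i)--(iii).
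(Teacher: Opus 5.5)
Your proposal is correct and follows essentially the same route as the paper's proof. You cut $p$ at its second-to-last horizontal step and turn conditions (ii) and (iii) into nonnegativity of $w$ from $h(p')$ and $\#D(w)\ge h(p')$. You then count weak peaks locally as in Lemma~\ref{lem:weight-segments}, including the observation that in case (a) $w$ cannot start with $D$ and the $HH$ junction when $w$ is empty.
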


\begin{proof}
Let $p\in\mathcal P$. If $p$ contains exactly one horizontal step, then
$p$ is of the form (a). Otherwise, let $p'$ be the prefix of $p$ ending
with the second-to-last horizontal step of $p$, and let $w$ be the word
over $\{U,D\}$ between this step and the last one, so that
$p=p'wH$. Clearly $p'$ satisfies (i), (ii) and (iii); moreover the last
step of $p'$ is a horizontal step at height $a=h(p')$, and by (iii)
applied to the last two horizontal steps of $p$ the word $w$ contains
at least $a$ down steps. Finally, $w$ is nonnegative from $a$ by (ii).
Uniqueness is clear, since the decomposition is determined by the
positions of the horizontal steps. The converse statement is
straightforward: conditions (ii) and (iii) for $p'wH$ follow from the
same conditions for $p'$, the nonnegativity of $w$ from $h(p')$ and
the inequality $\#D(w)\ge h(p')$.

The statement about the final height is obvious. As for the weak peaks,
in case (b) the factors of length two of $p'wH$ are those of $p'$,
those of $w$, the factor formed by the last $H$ of $p'$ and the first
step of $w$ (or the factor $HH$, if $w$ is empty), and the factor
formed by the last step of $w$ and the final $H$. Exactly as in the
proof of Lemma~\ref{lem:weight-segments}, the weak peaks among the
last three kinds of factors are counted by $\operatorname{wt}(w)$. Case
(a) is analogous, recalling that $w$ cannot start with $D$.
\end{proof}

\subsubsection{The functional equation}

\begin{thm}\label{thm:FE4231}
The series $Q(u)$ is the unique element in $\mathbb Q[y,u][[x]]$ satisfying
\begin{equation}\label{eq:FE4231}
Q(u)=1+x(1-y)\,Q(0)+\frac{xy}{K(u)}\Bigl(u\,Q\bigl(\omega(u)\bigr)-\lambda(u)\,Q(u_0)\Bigr).
\end{equation}
Moreover,
\[
F_T(x,y)=\frac{Q(0)-1}{x}
=\frac{1}{1-(1-y)x}\left(1-y+\frac{y\,Q(u_0)}{1-xu_0}\right).
\]
\end{thm}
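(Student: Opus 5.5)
The plan is to translate Proposition~\ref{prop:prefix-decomposition} into an equation for $Q(u)$, then evaluate it via Corollary~\ref{cor:operator}. Write $Q(u)=1+P(u)$, where $P(u)=\sum_{p\in\mathcal P}x^{|p|}y^{\wep(p)}u^{h(p)}$.

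\textbf{Step 1 (the equation).} By Proposition~\ref{prop:prefix-decomposition}, $\mathcal P$ is the disjoint union of the words of forms (a) and (b). The final $H$ contributes the factor $x$, and the statistics are additive as stated there. Hence
\[
P(u)=x\,W_0(u)+x\sum_{p'\in\mathcal P}x^{|p'|}y^{\wep(p')}S_{h(p')}(u).
\]
Since $S_0=W_0$, the term (a) is exactly the $a=0$ term of $\sum_a f_aS_a(u)$ with $f(u)=Q(u)$, the constant $1$ of $Q$ playing the role of the empty prefix. Therefore $P(u)=x\sum_{a\ge0}q_aS_a(u)$, where $Q(u)=\sum_a q_au^a$.

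\textbf{Step 2 (applying the operator).} Corollary~\ref{cor:operator} with $f=Q$ gives
\[
Q(u)=1+x(1-y)Q(0)+\frac{xy}{K(u)}\Bigl(uQ(\omega(u))-\lambda(u)Q(u_0)\Bigr),
\]
which is \eqref{eq:FE4231}. One checks that the substitutions are legitimate. The series $\omega(u)=x+O(x^2)$ and $u_0$ have zero constant term. The quotient by $K(u)$ makes sense because the numerator vanishes at $u=u_0$, so the quotient is a genuine element of $\mathbb Q[y,u][[x]]$; this is inherited from Proposition~\ref{prop:Sa}, where each $S_a$ is a series.

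\textbf{Step 3 (uniqueness).} This is the main point needing care. The map $\Phi(Q)$ defined by the right-hand side is a contraction in the $x$-adic topology. The term $x(1-y)Q(0)$ carries an explicit factor $x$. For the other term, $S_a(u)$ counts words of length at least $\#D\ge a$ for $a\ge1$, so its valuation is at least $a$. Then $x\sum_a q_aS_a$ has coefficient of $x^n$ depending only on the $q_a$ modulo $x^{n}$. I would argue directly with the combinatorial form $1+x\sum_a q_aS_a(u)$ rather than with the $K$-quotient, which avoids dividing by $K$. Since the two expressions agree as series, the fixed point is unique by induction on the power of $x$.

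\textbf{Step 4 (the formula for $F_T$).} The first equality is Lemma~\ref{lem:F-from-Q}. For the second, I would set $u=0$ in \eqref{eq:FE4231}. Then $K(0)=-x$ and $\lambda(0)=x/(1-xu_0)$, and the $uQ(\omega(u))$ term vanishes. This gives
\[
Q(0)=1+x(1-y)Q(0)+\frac{xy\,Q(u_0)}{1-xu_0},
\]
which can be solved for $Q(0)$. Then $(Q(0)-1)/x$ simplifies: $Q(0)-1=x\bigl((1-y)+yQ(u_0)/(1-xu_0)\bigr)/(1-(1-y)x)$, yielding the stated expression.
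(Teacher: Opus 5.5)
Your proposal is correct and follows essentially the same route as the paper. The decomposition of Proposition~\ref{prop:prefix-decomposition} gives $Q=1+x\sum_a q_aS_a(u)$; the paper instead applies Corollary~\ref{cor:operator} to $P=Q-1$ and keeps $xS_0$ separate, which is equivalent. Uniqueness then follows because $x\,\mathcal S$ strictly raises the $x$-adic valuation, and the formula for $F_T$ comes from evaluating at $u=0$, where $K(0)=-x$ and $\lambda(0)=x/(1-xu_0)$.

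Your side remark that the quotient by $K(u)$ lies in $\mathbb Q[y,u][[x]]$ is also sound: $K(u)=(u-u_0)\bigl(\tfrac{x}{u_0}-xu\bigr)$, and the second factor has constant term $1$, so it is a unit.
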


\begin{proof}
Set $P(u)=Q(u)-1=\sum_{a\ge0}P_a\,u^a$, with $P_a\in\mathbb Q[y][[x]]$.
The words $p\in\mathcal P$ with $h(p)=a$ have length at least $a+1$,
hence the valuation of $P_a$ is at least $a+1$.

We have $$
P(u)=x\,S_0(u)+x\sum_{a\ge0}P_a\,S_a(u).
$$
In fact, Proposition~\ref{prop:prefix-decomposition} provides the two possible forms of a path in $\mathcal P.$ Recalling that the final $H$
contributes to the generating function a factor $x$ and does not change the height, the words of
the form (a) yield $x\,W_0(u)=x\,S_0(u)$, while the words of the
form (b) whose prefix $p'$ ends at height $a$ yield
$x\,P_a\,S_a(u).$ 

Applying Proposition~\ref{prop:Sa} with $a=0$ to the first summand and
Corollary~\ref{cor:operator} with $f=P$ to the second one, we obtain
\[
P(u)=x(1-y)\bigl(1+P(0)\bigr)
+\frac{xy}{K(u)}\Bigl(u\bigl(1+P(\omega(u))\bigr)-\lambda(u)\bigl(1+P(u_0)\bigr)\Bigr),
\]
which is \eqref{eq:FE4231}, since $Q=1+P$.

As for uniqueness, the same computation shows that \eqref{eq:FE4231} is
equivalent to
\[
P=x\,S_0+x\,\mathcal S[P],
\qquad\text{where}\qquad
\mathcal S[f]=\sum_{a\ge0}f_a\,S_a(u).
\]
The operator $\mathcal S$ is linear and does not decrease the
valuation, since each $S_a$ has nonnegative valuation. If $P$ and
$\tilde P$ are two solutions, then $P-\tilde P=x\,\mathcal S[P-\tilde P]$,
so the valuation of $P-\tilde P$ is strictly larger than itself, which
forces $P=\tilde P$.

The first expression for $F_T(x,y)$ is Lemma~\ref{lem:F-from-Q}. To obtain the
second one, multiply \eqref{eq:FE4231} by $K(u)$ and set $u=0$. Since
$K(0)=-x$ and $\lambda(0)=\frac{x}{1-xu_0}$, we get
\[
-x\,Q(0)=-x\bigl(1+x(1-y)Q(0)\bigr)-\frac{x^2y\,Q(u_0)}{1-xu_0},
\]
that is,
\[
\bigl(1-(1-y)x\bigr)Q(0)=1+\frac{xy\,Q(u_0)}{1-xu_0}.
\]
Hence
\[
Q(0)-1=\frac{x}{1-(1-y)x}\left(1-y+\frac{y\,Q(u_0)}{1-xu_0}\right),
\]
and the claim follows from Lemma~\ref{lem:F-from-Q}.
\end{proof}

\begin{cor}\label{cor:FE4231-y1}
The generating function $F(x)=F(x,1)$ of the involutions in
$I_n(4321,4231)$ is given by
\[
F(x)=\frac{u_0}{x}\,Q(u_0),
\qquad u_0=\frac{1-\sqrt{1-4x^2}}{2x},
\]
where $Q(u)=Q(x,1;u)$ is the unique element of $\mathbb Q[u][[x]]$
such that
\[
Q(u)=1+\frac{x}{u-x-xu^2}\left(u\,Q\Bigl(\frac{x}{1-xu}\Bigr)-u_0\,Q(u_0)\right).
\]
\end{cor}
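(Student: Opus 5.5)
The plan is to specialize Theorem~\ref{thm:FE4231} to $y=1$ and simplify each ingredient. When $y=1$ we have $K(u)=u-x-xu^2$ and $\theta(u)=1/(1-xu)$, hence $\omega(u)=x/(1-xu)$. Also $\lambda(u)=x/(1-xu_0)$, which no longer depends on $u$. The kernel root becomes
\[
u_0=\frac{1-\sqrt{1-4x^2}}{2x},
\]
obtained by putting $y=1$ in the general formula for $u_0$.

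The first key step is to observe, as in the Remark following Lemma~\ref{lem:Wa}, that $K(u_0)=0$ at $y=1$ reads $u_0-x-xu_0^2=0$. This gives $u_0(1-xu_0)=x$, that is, $\frac{x}{1-xu_0}=u_0$. Equivalently, this is $\omega(u_0)=u_0$ from Lemma~\ref{lem:kernel-identities}. Hence $\lambda(u)=u_0$ identically.

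Next substitute into \eqref{eq:FE4231}. The term $x(1-y)Q(0)$ vanishes, and the equation becomes
\[
Q(u)=1+\frac{x}{u-x-xu^2}\Bigl(u\,Q\bigl(\tfrac{x}{1-xu}\bigr)-u_0\,Q(u_0)\Bigr),
\]
which is exactly the displayed equation. Uniqueness of its solution in $\mathbb Q[u][[x]]$ follows by the same valuation argument as in Theorem~\ref{thm:FE4231}. Indeed, the equation is equivalent to $P=xS_0+x\mathcal S[P]$ with everything specialized at $y=1$. One can also note that any solution of the specialized equation arises as the $y=1$ specialization, since the operator $\mathcal S$ still does not decrease valuation.

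Finally, for $F(x)$, take the second formula of Theorem~\ref{thm:FE4231} at $y=1$. The prefactor $\frac{1}{1-(1-y)x}$ equals $1$ and the term $1-y$ vanishes. This leaves $F(x)=\frac{Q(u_0)}{1-xu_0}$, and by the identity above $\frac{1}{1-xu_0}=\frac{u_0}{x}$, which gives $F(x)=\frac{u_0}{x}Q(u_0)$.

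The only delicate point is checking that specializing $y=1$ commutes with the substitutions $u\mapsto\omega(u)$ and $u\mapsto u_0$ inside $\mathbb Q[y,u][[x]]$. This holds because all substitutions are of series with zero constant term in $x$, and evaluation at $y=1$ is a ring homomorphism compatible with them. I would state this in one line rather than prove it at length.
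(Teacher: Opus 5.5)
Your proposal is correct and follows the paper's own proof. You specialize Theorem~\ref{thm:FE4231} at $y=1$, then use $K(u_0)=0$ (Lemma~\ref{lem:kernel-identities}) to get $\lambda(u)=\frac{x}{1-xu_0}=\omega(u_0)=u_0$ and $\frac{1}{1-xu_0}=\frac{u_0}{x}$. Your extra remark that uniqueness in $\mathbb Q[u][[x]]$ comes from rerunning the valuation argument on $P=xS_0+x\,\mathcal S[P]$ at $y=1$ makes explicit a point the paper leaves implicit.
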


\begin{proof}
For $y=1$ we have $K(u)=u-x-xu^2$, $\omega(u)=\frac{x}{1-xu}$ and
$\lambda(u)=\frac{x}{1-xu_0}=\omega(u_0)=u_0$ by
Lemma~\ref{lem:kernel-identities}. The claim follows from
Theorem~\ref{thm:FE4231}, since $\frac{1}{1-xu_0}=\frac{u_0}{x}$.
\end{proof}

\begin{oss}
Equation~\eqref{eq:FE4231} allows us to compute the coefficients of
$F(x,y)$ by iteration, since the operator $x\,\mathcal S$ strictly
increases the valuation. We obtain
\begin{align*}
F(x,y)={}&1+x+(1+y)x^2+(1+y)^2x^3+(1+4y+3y^2)x^4\\
&+(1+6y+9y^2+2y^3)x^5+(1+9y+19y^2+8y^3+y^4)x^6+\cdots,
\end{align*}
in accordance with the direct enumeration of $I_n(4321,4231)$.
\end{oss}

% \begin{oss}\label{oss:kernel-fails}
% The classical kernel method does not allow us to solve
% \eqref{eq:FE4231}. Indeed, by Lemma~\ref{lem:kernel-identities},
% $u_0$ is a fixed point of both $\omega$ and $\lambda$, so that
% replacing $u$ by $u_0$ in \eqref{eq:FE4231} multiplied by $K(u)$ yields
% the trivial identity $0=0$, and the unknown $Q(u_0)$ remains
% undetermined.

%------
% The map $\omega$ is a M\"obius transformation with fixed
% points $u_0$ and $1/u_0$, and multiplier
% $\omega'(u_0)=\frac{x^2y}{(1-xu_0)^2}$. In the coordinate
% $t=\frac{u-u_0}{u-1/u_0}$ it becomes $t\mapsto\omega'(u_0)\,t$, so that
% \eqref{eq:FE4231} is a $q$-difference-type equation, where the role of
% $q$ is played by a formal power series in $x$.

%\end{oss}

It is natural to ask whether $F$ is D-finite; we conjecture that it is not. This is
supported by the following fact: we computed the first $421$
coefficients of $F(x,1)$ and found no linear recurrence with polynomial
coefficients of order $r\le12$ and degree $d$, for any pair $(r,d)$
such that the number $(r+1)(d+1)$ of unknown coefficients is at most
$380$ (for instance, order $1$ and degree up to $189$, or order $12$ and
degree up to $27$). 

% As a sanity check, the same procedure immediately
% detects the recurrence satisfied by the Motzkin numbers.

\begin{question}\label{q:4231}
Is the generating function $F_T(x,1)$ of the involutions in
$I_n(4321,4231)$ non-D-finite? More generally, is the generating function $F_T(x,y)$ D-finite?
\end{question}

\subsubsection{An explicit continued fraction for $F(x,y)$}
\label{subsubsec:CF4231}

% As observed in Remark~\ref{oss:kernel-fails}, the kernel method does not
% determine the unknown series $Q(u_0)$ in~\eqref{eq:FE4231} 

In this subsection, we solve equation
\eqref{eq:FE4231}. After a change of variable, the
substitution $u\mapsto\omega(u)$ becomes a dilation, and the functional
equation turns into a three-term recurrence, which leads to a continued
fraction.

\begin{thm}\label{thm:CF4231}
Let
\[
u_0=\frac{1+(1-y)x^2-\sqrt{\bigl(1+(1-y)x^2\bigr)^2-4x^2}}{2x},
\qquad
\sigma=1-(1-y)\,x\,u_0,
\qquad
q=\frac{y\,u_0^2}{\sigma^2}.
\]
Then
\[
F(x,y)=\frac1x\left(\frac{1+\xi}{1-(1-y)\,x\,(1+\xi)}-1\right),
\qquad
\xi=\cfrac{y\,u_0\,(1-u_0^2)}{\sigma\,(1-u_0^2)-y\,u_0
-\cfrac{y^2u_0^4\,q}{\Phi}},
\]
where
\[
\Phi=A_1-\cfrac{y^2u_0^4\,q^3}{A_2-\cfrac{y^2u_0^4\,q^5}{A_3-\cfrac{y^2u_0^4\,q^7}{A_4-\cdots}}},
\qquad
A_n=(1-u_0^2)^2-y\,u_0\,(1+u_0^2)\,q^n .
\]
\end{thm}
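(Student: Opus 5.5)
The plan is to solve \eqref{eq:FE4231} by linearising the Möbius map $\omega$ around its fixed points. This turns the functional equation into a three-term recurrence on Taylor coefficients, and the continued fraction is the standard solution of such a recurrence.

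\textbf{Step 1 (change of variable).} By Lemma~\ref{lem:kernel-identities}, $\omega$ is a Möbius transformation with fixed points $u_0$ and $1/u_0$. Indeed, $K(u)=-x(u-u_0)(u-u_0^{-1})$, and $u=\omega(u)$ exactly when $K(u)=0$. I would introduce
\[
t=\frac{u-u_0}{1-u_0u}.
\]
In this coordinate $\omega$ becomes $t\mapsto \omega'(u_0)\,t$. A direct computation should give the multiplier $\omega'(u_0)=x^2y/(1-xu_0)^2$. Using $u_0=\omega(u_0)$, i.e. $x\sigma=u_0(1-xu_0)$, this equals $yu_0^2/\sigma^2=q$. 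This identity is the first thing to verify, since it explains where $q$ comes from.

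Next, write $\tilde Q(t)=Q(u)$ and substitute $u=(t+u_0)/(1+u_0t)$ into \eqref{eq:FE4231} multiplied by $K(u)$. Then $K(u)$ becomes a multiple of $t$ times a rational factor in $t$, and $\lambda(u)$ is affine in $u$, hence rational in $t$. After clearing the denominators $(1+u_0t)^2$, I expect an equation of the form
\[
\alpha(t)\tilde Q(t)+\beta(t)\tilde Q(qt)=\gamma(t)+c\,\tilde Q(0)+c'\,Q(0),
\]
where $\alpha,\beta,\gamma$ are low-degree polynomials in $t$ with coefficients depending on $x,y,u_0$. Note that $Q(u_0)=\tilde Q(0)$, and $Q(0)$ is the unknown in Theorem~\ref{thm:FE4231}.

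\textbf{Step 2 (recurrence).} Expand $\tilde Q(t)=\sum_n c_n t^n$ and extract the coefficient of $t^n$. Because $\alpha$ and $\beta$ have degree at most $2$ in $t$, this yields a three-term recurrence. Its coefficients involve $q^n$, specifically through terms like $(1-u_0^2)^2-yu_0(1+u_0^2)q^n$, which is exactly $A_n$. The off-diagonal products carry the factors $y^2u_0^4q^{2n+1}$.

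The solution that is a genuine power series is the minimal solution of this recurrence. Its ratio is obtained by the standard Pincherle/Euler argument:
\[
\frac{c_{n+1}}{c_n}=\frac{\ast}{A_{n+1}-\frac{\ast}{A_{n+2}-\cdots}}.
\]
Convergence holds $x$-adically: $q=O(x^2)$, so the $n$-th partial denominator differs from the previous one in valuation growing with $n$.

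\textbf{Step 3 (assembling).} The equations for $n=0,1$ involve the boundary unknowns $\tilde Q(0)=Q(u_0)$ and $Q(0)$. Combined with the second formula of Theorem~\ref{thm:FE4231}, they express $\xi$, which I expect to equal $yQ(u_0)/(1-xu_0)$ up to normalisation, as the displayed fraction with tail $\Phi$. Finally, substitute into
\[
F=\frac{1}{1-(1-y)x}\left(1-y+\frac{yQ(u_0)}{1-xu_0}\right),
\]
and rewrite to obtain the stated form $\frac1x\bigl(\frac{1+\xi}{1-(1-y)x(1+\xi)}-1\right)$. Here I would use $Q(0)=1+xF$ and the relation between $Q(0)$ and $Q(u_0)$.

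\textbf{Main obstacle.} The hardest part is the bookkeeping in Step 1: obtaining clean polynomial coefficients after the change of variable, and matching them exactly to $A_n$ and $y^2u_0^4q^{2n+1}$. The identities $u_0=\omega(u_0)$ and $x\sigma=u_0(1-xu_0)$ are what allow $x$ to be eliminated in favour of $u_0$ and $\sigma$. I would check these first on the $y=1$ specialisation of Corollary~\ref{cor:FE4231-y1}, and compare the result against the series expansion given in the Remark.
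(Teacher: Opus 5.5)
Your proposal is correct and follows the paper's proof essentially step by step. It uses the same substitution $u=\hat u(t)=(u_0+t)/(1+u_0t)$, which turns $\omega$ into the dilation $t\mapsto qt$ with $q=\omega'(u_0)=x^2y/(1-xu_0)^2$, and the same extraction of coefficients into a three-term recurrence with diagonal terms $A_n$. It then identifies the continued fraction with the ratio $d_1/d_0$ of the power-series solution and combines the result with $\bigl(1-(1-y)x\bigr)Q(0)=1+xyQ(u_0)/(1-xu_0)$, exactly as the paper does.

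The paper makes explicit three points you left as expectations. First, the first step of the recurrence carries the modified coefficient $yu_0^2(1-\zeta)$ coming from $\lambda$; since $1-\zeta=(1-u_0^2)/\sigma$, this produces the special top level of $\xi$. Second, your \emph{minimal solution} step is proved by showing that $d_1-r\,d_0$ is divisible by $x^{N^2+5N}$ for every $N$. Third, the precise normalisation is $\xi=xyQ(u_0)/\bigl((1-xu_0)(1+x(1-y)Q(0))\bigr)$.
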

\proof
Set $$\hat u(t)=\frac{u_0+t}{1+u_0t}.$$

The following identities, which hold because $K(u_0)=0$, are readily checked with a computer algebra system,
\begin{gather*}
    q=\frac{x^2y}{(1-xu_0)^2},\quad K\bigl(\hat u(t)\bigr)=\frac{x\,t\,(1-u_0^2)^2}{u_0(1+u_0t)^2},\\ (1+u_0t)\,\lambda\bigl(\hat u(t)\bigr)=u_0+\zeta\,t\quad \text{where }\zeta=\frac{x\bigl(u_0-(1-y)x\bigr)}{1-xu_0},\text{ and }\\
    \omega\bigl(\hat u(t)\bigr)=\hat u(qt).
\end{gather*}

Notice that the last of these identities shows that, after the change of variable $u\to \hat u,$ the map $\omega$ acts as a dilation. 

Since the coefficient of $x^m$ in $Q(u)$ is a polynomial in $u$, we can
substitute $u=\hat u(t)$ and obtain a formal power series
\[
\tilde Q(t)=Q\bigl(\hat u(t)\bigr)=\sum_{n\ge0}d_n\,t^n,
\qquad d_n\in\mathbb Q[y][[x]],
\]
with $d_0=Q(u_0)$. By the last of the previous identities, the same substitution
transforms $Q(\omega(u))$ into $\tilde Q(qt)$.

We multiply \eqref{eq:FE4231} by
$K(u)$, substitute $u=\hat u(t)$, and multiply by
$u_0(1+u_0t)^2/x$. We obtain
\[
t\,(1-u_0^2)^2\,\tilde Q(t)=t\,(1-u_0^2)^2\,E
+y\,u_0\,(1+u_0t)\Bigl((u_0+t)\,\tilde Q(qt)-(u_0+\zeta t)\,d_0\Bigr),
\]
where $E=1+x(1-y)Q(0).$

% The coefficient of $t^m$ in the large parenthesis is $0$ for $m=0$,
% $u_0q\,d_1+(1-\zeta)\,d_0$ for $m=1$, and $u_0q^md_m+q^{m-1}d_{m-1}$ for
% $m\ge2$. Extracting the coefficients of $t^1$, $t^2$ and $t^{n+1}$
% ($n\ge2$) we get, respectively, \eqref{eq:rec0}, \eqref{eq:rec} for
% $n=1$, and \eqref{eq:rec} for $n\ge2$.
Extracting the coefficients of $t^n,$ $n\geq 1,$ in the previous identity we get
\begin{align}
y\,u_0^2\,q\,d_1&=\bigl((1-u_0^2)^2-y\,u_0(1-\zeta)\bigr)\,d_0-(1-u_0^2)^2\,E,
\label{eq:rec0}\\  y\,u_0^2\,q^{2}\,d_{2}&=A_1\,d_1-y\,u_0^2\,(1-\zeta)\,d_{0},\label{eq:rec1}\\
y\,u_0^2\,q^{n+1}\,d_{n+1}&=A_n\,d_n-y\,u_0^2\,q^{n-1}\,d_{n-1}
\qquad(n\ge2).
\label{eq:rec}
\end{align}

The passage from a three-term linear recurrence to a continued fraction
is classical: the ratios of consecutive terms of a suitable solution of
such a recurrence are given by a continued fraction built from its
coefficients (see, e.g.,~\cite[Section 5.3]{Jones_Thron_1984}).
We now apply this ideas to the recurrence~\eqref{eq:rec}

Define the continued fraction
\begin{equation}\label{eq:CF-r}
r=\cfrac{y\,u_0^2\,(1-\zeta)}{A_1-\cfrac{y^2u_0^4\,q^3}{A_2-\cfrac{y^2u_0^4\,q^5}{A_3-\cdots}}}\ .
\end{equation}
It is a well-defined formal power series, since each $A_n$ has constant
term $1$ and the $n$-th partial numerator $y^2u_0^4q^{2n+1}$ is divisible
by $x^{4n+6}$. For $n\ge1$ let $r_n$ be the continued fraction obtained
from \eqref{eq:CF-r} by starting at level $n$, so that $r=r_1$ and
\begin{equation}\label{eq:rn}
r_n=\frac{y\,u_0^2\,q^{n-1}}{A_n-y\,u_0^2\,q^{n+1}\,r_{n+1}}\qquad(n\ge 2).
\end{equation}

Our aim now is to show that $d_1=r\,d_0$. For $n\ge1$ set $e_n=d_n-r_nd_{n-1}$
and $w_n=A_n-y\,u_0^2q^{n+1}r_{n+1}$, which has constant term $1$. By
\eqref{eq:rn} we have $w_nr_n=y\,u_0^2q^{n-1}$, hence, by
\eqref{eq:rec},
\begin{align*}
y\,u_0^2q^{n+1}e_{n+1}
&=A_nd_n-y\,u_0^2q^{n-1}d_{n-1}-y\,u_0^2q^{n+1}r_{n+1}d_n\\
&=w_nd_n-w_nr_nd_{n-1}=w_n\,e_n .
\end{align*}
Iterating, for every $N\ge1$ we get
\[
e_1=\prod_{n=1}^{N}\frac{y\,u_0^2\,q^{n+1}}{w_n}\cdot e_{N+1}.
\]
Since $e_{N+1}$ is a formal power series and $y\,u_0^2q^{n+1}$ is
divisible by $x^{2n+4}$, the series $e_1$ is divisible by $x^{N^2+5N}$
for every $N$; hence $e_1=0$, that is, $d_1=r\,d_0$, as claimed above. 

Substituting $d_1=r\,d_0$ into \eqref{eq:rec0} we get
$$Q(u_0)=d_0=\frac{(1-u_0^2)^2}{(1-u_0^2)^2-y\,u_0(1-\zeta)-y\,u_0^2\,q\,r}\,\bigl(1+x(1-y)Q(0)\bigr).$$ On the other
hand, in the proof of Theorem~\ref{thm:FE4231} we showed that
$$\bigl(1-(1-y)x\bigr)Q(0)=1+\frac{xy\,Q(u_0)}{1-xu_0}.$$ Combining the
two relations,
\[
\bigl(1-(1-y)x\bigr)Q(0)=1+\xi\bigl(1+x(1-y)Q(0)\bigr),
\]
that is,
\[
Q(0)=\frac{1+\xi}{1-(1-y)x(1+\xi)},
\]
and the claim follows from $F=(Q(0)-1)/x$ (Lemma~\ref{lem:F-from-Q}).
\endproof

For $y=1$ the formula becomes particularly simple.

\begin{cor}\label{cor:CF4231-y1}
Let $s=\frac{1-\sqrt{1-4x^2}}{2x}$, $q=s^2$ and
$B_n=(1-q)^2-s\,q^n(1+q)$. The generating function of the involutions
in $I_n(4321,4231)$, counted by length, is
\[
F(x,1)=\cfrac{1-q^2}{1-q-s-\cfrac{s^2q^2}{B_1-\cfrac{s^2q^4}{B_2-\cfrac{s^2q^6}{B_3-\cdots}}}}\ .
\]
\end{cor}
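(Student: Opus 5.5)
The plan is to obtain the statement by specializing Theorem~\ref{thm:CF4231} to $y=1$ and simplifying each ingredient, using only the defining equation of $u_0$ at $y=1$.

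First I would set $y=1$ in the data of Theorem~\ref{thm:CF4231}. Then $K(u)=u-x-xu^2$, so $u_0=s=\frac{1-\sqrt{1-4x^2}}{2x}$, and $s$ satisfies the kernel identity $s=x(1+s^2)$. From this I get $\sigma=1-(1-y)xu_0=1$ and $q=yu_0^2/\sigma^2=s^2$. Next I rewrite the coefficients:
\[
A_n=(1-s^2)^2-s(1+s^2)q^n=(1-q)^2-s\,q^n(1+q)=B_n .
\]
For the partial numerators, $y^2u_0^4q^{2n+1}=s^4q^{2n+1}=q^{2n+3}=s^2q^{2n+2}$. For $n=0,1,2,\dots$ these give $s^2q^2$, $s^2q^4$, $s^2q^6,\dots$, exactly the numerators appearing in the statement. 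In particular, the first one, $y^2u_0^4q$ in the denominator of $\xi$, becomes $s^2q^2$.

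Then I simplify the outer formula. With $y=1$ the factor $1-(1-y)x(1+\xi)$ equals $1$, so Theorem~\ref{thm:CF4231} gives $F(x,1)=\frac{1}{x}\bigl((1+\xi)-1\bigr)=\xi/x$. Moreover
\[
\xi=\frac{s(1-q)}{(1-q)-s-\dfrac{s^2q^2}{\Phi}},
\qquad
\Phi=B_1-\cfrac{s^2q^4}{B_2-\cfrac{s^2q^6}{B_3-\cdots}} .
\]
Finally, the kernel identity gives $s/x=1+s^2=1+q$, hence the numerator of $\xi/x$ is $\frac{s}{x}(1-q)=(1+q)(1-q)=1-q^2$. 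This yields exactly the displayed continued fraction.

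There is no real obstacle here. The only point that needs care is matching the indexing of the partial numerators $s^2q^{2k}$ with the levels $B_k$. I would also confirm against the series expansion in the Remark following Corollary~\ref{cor:FE4231-y1} at $y=1$: the continued fraction should give $1+x+2x^2+4x^3+8x^4+\cdots$.
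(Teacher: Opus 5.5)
Your proposal is correct and follows the same route as the paper: specialize Theorem~\ref{thm:CF4231} at $y=1$, use $u_0=s$, $\sigma=1$, $q=s^2$, $A_n=B_n$, rewrite the partial numerators as $s^2q^{2k}$, and use $s/x=1+q$ to turn $F(x,1)=\xi/x$ into the stated fraction. The only cosmetic difference is that the paper's argument goes through the intermediate quantities $\zeta=q$ and $r$ from the proof of the theorem and then cancels a factor $1-q$, whereas you work directly from the theorem's final statement, where that cancellation is already done.
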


% \begin{proof}
% For $y=1$ we have $u_0=s$ and $\frac{x}{1-xs}=s$, hence $q=s^2$,
% $\zeta=q$, $(1-u_0^2)^2=(1-q)^2$, $A_n=B_n$, $\xi=s\gamma$, and
% $F(x,1)=\xi/x=(1+q)\gamma$, since $\frac sx=1+s^2$. Moreover
% $y\,u_0^2q\,r=q^2r$ and, by~\eqref{eq:CF-r}, $r=\frac{q(1-q)}{\Psi}$,
% where $\Psi=B_1-\cfrac{s^2q^4}{B_2-\cfrac{s^2q^6}{B_3-\cdots}}$. Hence
% \[
% F(x,1)=\frac{(1+q)(1-q)^2}{(1-q)^2-s(1-q)-\dfrac{s^2q^2(1-q)}{\Psi}}
% =\frac{1-q^2}{1-q-s-\dfrac{s^2q^2}{\Psi}}.
% \qedhere
% \]
% \end{proof}

\begin{table}[ht]
\centering
\renewcommand{\arraystretch}{1.3}
\begin{tabular}{>{\raggedright\arraybackslash}p{3.5cm} >{\raggedright\arraybackslash}p{1.5cm}
                >{\raggedright\arraybackslash}p{6.5cm}
                >{\raggedright\arraybackslash}p{3.5cm} }
\toprule
$T$ & Section & $\mathcal W^{20}_4{(T)}$ & Generating function \\
\midrule
$\emptyset$ & \ref{4321}             & None                                  & algebraic \\
$\{1234\}$   & \ref{1234}            & $DDU ,\
   DUU,\
   DDDD,\
   DDDH$
   $DDHH,\
   DHHH,\
   DHHU,\
   HHHH$
   $HHHU,\
   HHUU,\
   HUUU,\
   UUUU  $                           & finite \\
$\{1243\}$   & \ref{1243}            & $HHU,\ DDU,\ DHU,\ DDH$             & rational \\ 
$\{1342,1423\}$  & \ref{1342}        & $DU,\ DHH,\ HUU,\ HUHH,\ DHDD$       & rational \\
$\{1432\}$  & \ref{1432}             & $HUH,\ DUH$                          & algebraic \\
$\{2143\}$  & \ref{2143}             & $DU,\ HUDH$                             & rational \\

$\{2413,3142\}$ & \ref{2413}         & $DU,\ DH,\ HU$                       & rational \\
$\{3241,4213\}$  & \ref{3241}        & $HU,\ HDH$                           & algebraic \\
$\{3412\}$   & \ref{3412}            & $UU,\ DD,\ DU,\ DH,\ HU$             & rational \\
$\{1324\}$   & \ref{1324}            & $^\ast$ $DHU,\ 
   DHDH,\
   DUDU,\
   HUHU$                    & rational \\

$\{2341,4123\}$  & \ref{2341}        & $^\ast$ $DU,\
   DDD,\
   DDH,\
   DHH,\ $
   $HHU,\
   HUU,\
   UUU,\
   HHHH$                         & rational \\

$\{3421,4312\}$ & \ref{3421}         & None                    & algebraic \\

$\{4231\}$    & \ref{4231}           & None                             & not D-finite? \\

\bottomrule
\end{tabular}
\caption{Involutions avoiding $4321$ and the patterns in $T$: minimal
forbidden subsequences (in the sense of
Definition~\ref{def:subsequence}) of the Motzkin paths associated with
connected involutions, as suggested by the procedure of
Section~\ref{sec:methodology}, and nature of the bivariate generating
function. Sets $T$ equivalent under the symmetries of Section~2 are
omitted.\\
$^{\ast}$In these cases, the forbidden subsequences do not
characterize the connected Motzkin paths, and additional conditions are needed.}
\label{tab:summary}
\end{table}

\section{Conclusion and open problems}
\label{sec:conclusion}

We have enumerated, according to length and number of descents, all
classes of involutions avoiding $4321$ and another pattern of length
four, combining Biane's bijection with a computer search for the
subsequences avoided by the associated Motzkin paths. All generating
functions turn out to be rational or algebraic, with the exception of
$I_n(4321,4231)$, whose generating function is an explicit continued
fraction and is conjecturally not D-finite (Question~\ref{q:4231}).
Natural open problems are: proving this conjecture; finding a
combinatorial proof of the continued fraction, in the spirit
of~\cite{flaj}; and extending the method to involutions
avoiding $4321$ together with longer patterns.

\FloatBarrier
% \section*{Acknowledgments}

% We would like to express our sincere gratitude to the anonymous referees, whose insightful comments and careful reading greatly enhanced the quality of this paper. 

\addcontentsline{toc}{section}{Bibliography}
\bibliographystyle{plain}
\bibliography{BIBLIOGRAFIA}

\end{document}